\documentclass[12pt]{article}
\usepackage{graphicx}
\usepackage{amsmath, amssymb, amsthm}
\usepackage{bbm}
\usepackage{enumerate}
\usepackage{tikz-cd}
\usepackage{cancel}
\usepackage{bm}
\usepackage{mathtools}
\usepackage[left=25mm, right=25mm]{geometry}

\newtheorem{theorem}{Theorem}[section]
\newtheorem{proposition}[theorem]{Proposition}
\newtheorem{lemma}[theorem]{Lemma}
\newtheorem{corollary}[theorem]{Corollary}

\theoremstyle{definition}
\newtheorem{definition}[theorem]{Definition}
\newtheorem{example}[theorem]{Example}
\theoremstyle{remark}
\newtheorem{remark}[theorem]{Remark}

\numberwithin{equation}{section}

\newcommand{\R}{\mathbb{R}}
\newcommand{\Z}{\mathbb{Z}}
\newcommand{\Q}{\mathbb{Q}}
\newcommand{\C}{\mathbb{C}}
\newcommand{\PP}{\mathbb{P}}

\newcommand{\D}{\mathcal{D}}
\newcommand{\DD}{\mathbb{D}}
\newcommand{\spec}{\operatorname{Spec}}

\newcommand{\dR}{\mathrm{dR}}

\newcommand{\FT}{\operatorname{FT}}
\newcommand{\Cancel}{\operatorname{\mathbf{Cancel}}}
\newcommand{\rank}{\operatorname{rank}}
\newcommand{\fin}{\widetilde{\mathrm{fin}}}

\newcommand{\1}{\mathbbm{1}}
\newcommand{\hyp}{\mathrm{Hyp}}

\newcommand{\Ker}{\operatorname{Ker}}

\title{GKZ Systems and Gauss--Manin Connections of Families of Projective Hypersurfaces}
\author{Ryo Negishi}
\date{}

\begin{document}
\maketitle
\begin{abstract}
Periods of projective hypersurfaces satisfy GKZ hypergeometric systems, which may have solutions that do not arise as periods. For deformations of Fermat hypersurfaces, we describe the Gauss--Manin connection in terms of images of contiguity operators between GKZ systems. These images are also realized as Fourier transforms of minimal extensions of Kummer modules.
\end{abstract}
\section{Introduction}

For a smooth projective morphism \(p:Y \rightarrow S\) of smooth complex algebraic varieties, the systems of differential equations satisfied by the periods of its fibers are called Picard--Fuchs systems and are a classical subject of study. Picard--Fuchs systems have an intrinsic algebraic description in terms of the Gauss--Manin connection. Katz and Oda \cite[Theorem~1]{KO} constructed a canonical integrable connection, called the Gauss--Manin connection, on the relative de Rham cohomology \(H^q_{\dR}(Y/S)=R^qp_*\Omega^\bullet_{Y/S}\):
\[
  \nabla^{\mathrm{GM}}:H^q_{\dR}(Y/S) \rightarrow H^q_{\dR}(Y/S)\otimes_{\mathcal O_S}\Omega^1_S.
\]
Integration over locally constant families of homology classes intertwines \(\nabla^{\mathrm{GM}}\) with differentiation of periods. Thus differential relations among cohomology classes yield Picard--Fuchs systems for their periods. In algebraic geometry, Gauss--Manin connections are used to study variations of Hodge structure and monodromy. The variation of Hodge structure associated with a family of Calabi--Yau varieties is closely related to mirror symmetry. In particular, solutions of Picard--Fuchs equations are used to construct mirror maps and compute Yukawa couplings \cite{CK}. Moreover, the Gauss--Manin connection also plays an important role in arithmetic geometry by providing information about the Frobenius action on \(p\)-adic cohomology.

The Gauss--Manin connections of certain one-parameter families of projective hypersurfaces are known to be described by generalized hypergeometric differential equations, e.g. \cite{K09,N}. More generally, periods of a family of projective hypersurfaces are known to satisfy systems of linear partial differential equations called GKZ systems \cite{GKZ90}, introduced by Gel'fand, Kapranov, and Zelevinsky. GKZ systems satisfied by periods, however, may have solutions that do not arise from the periods. For example, consider the four-parameter family defined by
\[
  \lambda_1X_1^3+\lambda_2X_2^3+\lambda_3X_3^3 + \lambda_4X_1X_2X_3=0.
\]
GKZ systems are determined by an integer matrix \(A \in M_{n \times m}(\Z)\) and a parameter vector \(\bm{\beta} \in \C^n\) (Definition \ref{def:GKZ}). Then the corresponding GKZ system is given by
\[
  A=\begin{pmatrix}
    3&0&0&1\\
    0&3&0&1\\
    0&0&3&1
  \end{pmatrix},
  \quad \bm{\beta}=(-1,-1,-1)^T.
\]
With \(\partial_i=\partial/\partial\lambda_i\), the GKZ system associated with \(A\) and \(\bm{\beta}\) is given explicitly by
\[
  \begin{cases}
    (\partial_1\partial_2\partial_3-\partial_4^3)f=0,\\
    (3\lambda_i\partial_i+\lambda_4\partial_4+1)f=0 \quad (i=1,2,3).
  \end{cases}
\]
Restricting to \((\lambda_1,\lambda_2,\lambda_3,\lambda_4)=(1,1,1,\lambda)\) gives the third-order equation
\[
  27\partial_\lambda^3+(\theta_\lambda+1)^3)f=0, \quad \partial_\lambda=\frac{d}{d\lambda},\quad \theta_\lambda = \lambda\partial_\lambda.
\]
Hence the periods of the Hesse cubic curves
\[
  X_1^3 + X_2^3 + X_3^3 + \lambda X_1X_2X_3=0 \quad (\lambda^3 \neq -27)
\]
satisfy the preceding differential equation. On the other hand, the Picard--Fuchs equation should be second-order since this is a non-isotrivial family of elliptic curves. Thus, it is natural to ask how to determiine the Gauss--Manin connection from the corresponding GKZ system. This question has been studied, for example, in \cite{semi-periods,HLY,Nagano,Zhou}.

We consider the family \(Y\subset\mathbb P^{n-1}\times S\), with \(S=\mathbb A^m_\C = \spec\C[\lambda_1, \ldots, \lambda_m]\), defined by
\begin{equation}\label{eq:intro-family}
  F_A=\sum_{i=1}^n\lambda_iX_i^d + \sum_{j=n+1}^m\lambda_jX^{\bm{a}_j} = 0, \quad A=\bigl[dI_n\mid\bm{a}_{n+1}\ \cdots\ \bm{a}_m\bigr] \in M_{n \times m}(\Z),
\end{equation}
where \(d,m\geq n\geq2\), each \(\bm{a}_j\) has nonnegative integer entries, and the sum of its entries is \(d\). Let \(U\subset S\) be the largest open subset over which \(Y\to S\) is smooth, and let \(Y_U = Y \times_S U\). The finite abelian group \(\Gamma_A/\Delta\), defined in Section~\ref{sec:fermat-de-rham}, acts on the family. This action gives the eigendecomposition of the primitive part of de Rham cohomology \(H^{n-2}_{\dR}(Y_U/U)^{\mathrm{prim}}\). Our main result identifies every nonzero eigenspace of \(H^{n-2}_{\dR}(Y_U/U)^{\mathrm{prim}}\) with the restriction to \(U\) of the image of contiguity operators between GKZ systems. Each such image is also realized as the Fourier transform of the minimal extension of a suitable Kummer module (Proposition \ref{prop:contiguity-image}, Theorem \ref{thm:gauss-manin-gkz}).

More precisely, let \(\widetilde{A}\) be a normalization of \(A\) so that \(\Z\widetilde{A}=\Z^n\) and consider the monomial map
\[
  \phi^{\widetilde A}:\mathbb{G}_m^n\longrightarrow\mathbb{A}^m_\C; t\longmapsto(t^{\widetilde{\bm{a}}_1},\ldots,t^{\widetilde{\bm{a}}_m}),
\]
where \(\widetilde{\bm{a}}_j\) is the \(j\)-th column of \(\widetilde{A}\). Results of Schulze--Walther and Reichelt \cite{SW09,reichelt} describe a GKZ system associated with \(A\) and \(\bm{\beta} = (\beta_1, \ldots, \beta_n) \in \C^n\), under suitable conditions on \(\bm{\beta}\), as Fourier transforms of direct images \(\FT(\phi^{\widetilde{A}}_+\mathcal{K}_\beta), \FT(\phi^{\widetilde{A}}_!\mathcal{K}_\beta)\) of the Kummer module
\[
  \mathcal{K}_{\bm{\beta}} := \D_{\mathbb{G}_m^n}/\sum_{i=1}^n\D_{\mathbb{G}_m^n}(t_i\partial_{t_i}-\beta_i).
\]
Using the theory of algebraic Dwork cohomology due to Dimca, Maaref, Sabbah, and Saito \cite{DMSS}, we can construct a surjection from each nonzero eigenspace of \(H^{n-2}_{\dR}(Y_U/U)^{\mathrm{prim}}\) onto the restriction to \(U\) of the Fourier transform of the minimal extension \(\FT(\phi^{\widetilde{A}}_{!+}\mathcal{K}_{\bm{\beta}})\), for a suitable parameter \(\bm{\beta}\). By Proposition~\ref{prop:contiguity-image}, this Fourier transform is isomorphic to the image of a contiguity operator between GKZ systems. We then show that this surjection is an isomorphism using a holonomic-rank estimate based on the Gr\"obner deformation techniques developed in \cite{SST}.

We obtain a more explicit description of the Gauss--Manin connection for the one-parameter family
\begin{equation}\label{eq:one-parameter_family}
  X_1^d+\cdots+X_n^d+\lambda X_1^{a_1}\cdots X_n^{a_n}=0, \quad a_i\geq0,\quad \sum_{i=1}^n a_i=d.
\end{equation}
For this family, the restrictions of the GKZ systems to one variable become, after a change of variable, generalized hypergeometric differential equations of the form
\[
 \left(\prod_{j=1}^{N}(\theta_t+\nu_j-1) - t\prod_{i=1}^{N}(\theta_t+\mu_i)\right)f(t)=0, \quad \theta_t=t\frac{d}{dt}.
\]
Under this identification, the contiguity maps appearing in our main theorem correspond to contiguity maps for these hypergeometric equations. Then the resulting image of the contiguity operators is the hypergeometric differential equations obtained by removing all pairs of parameters satisfying \(\mu_i\equiv\nu_j\bmod \Z\).

Our work is closely related to \cite{AM}. Asakura and Matsubara-Heo describe the subspace of the solutions of the GKZ system spanned by periods of the family \eqref{eq:intro-family}. They use this description to study the periods of the limiting mixed Hodge structure.

The paper is organized as follows. Section~2 recalls the required \(\D\)-module operations and facts about GKZ systems. Section~3 constructs the maps from GKZ modules to Dwork cohomology and proves the main theorem. Section~4 establishes the holonomic-rank estimate used in that proof. Section~5 treats the case of the one-parameter family given by (\ref{eq:one-parameter_family}).

\section{Preliminaries}
\subsection{Operations on \(\D\)-modules}

Let \(S\) be a smooth algebraic variety over \(\C\). Throughout this paper, \(\D_S\)-module means \(\mathcal{O}_S\)-quasi-coherent left \(\D_S\)-module. We denote the bounded derived category of \(\D\)-modules (resp. holonomic \(\D\)-modules) by \(D^b(\D_S)\) (resp. \(D^b_h(\D_S)\)). When \(S\) is an affine variety, we identify \(\mathcal{D}_S\)-modules with their global sections since a smooth affine variety is D-affine \cite[Proposition 1.4.3]{HTT}.

Let \(f:X\to Y\) be a morphism of connected smooth complex algebraic varieties and let \(d_X=\dim X, d_Y=\dim Y\). We define the transfer bimodules by
\[
\mathcal D_{X\rightarrow Y} :=\mathcal{O}_X \otimes_{f^{-1}\mathcal{O}_Y} f^{-1}\D_Y, \quad \D_{Y\leftarrow X} := \omega_X \otimes_{\mathcal{O}_X} \mathcal D_{X\rightarrow Y} \otimes_{f^{-1}\mathcal{O}_Y}f^{-1}\omega_Y^{-1},
\]
where \(\omega_X = \Omega_X^{d_X}, \omega_Y = \Omega_Y^{d_Y}\).

For \(M \in D^b(\D_X)\), the direct image of \(M\) under \(f\) is defined by
\[
f_+M := Rf_*(\D_{Y\leftarrow X}\overset{L}{\otimes}_{\D_X}M).
\]
For \(N\in D^b(\D_Y)\), the inverse image of \(N\) under \(f\) is defined by
\[
f^+N := \D_{X\rightarrow Y}\overset{L}{\otimes}_{f^{-1}\D_Y}f^{-1}N.
\]

The holonomic duality functor is defined by
\[
\mathbb D_X(M) := R\mathcal{H}om_{\D_X}(M, \D_X)\otimes_{\mathcal{O}_X}\omega_X^{-1}[d_X].
\]
It induces an equivalence \(\DD_X : D_h^b(\mathcal D_X) \rightarrow D_h^b(\mathcal D_X)^{\mathrm{op}}\), and, for a holonomic \(\mathcal D_X\)-module \(M\), \(\DD_X(M)\) is again concentrated in degree zero.

We further define the exceptional direct and inverse image functors by
\[
f_! := \DD_Y\circ f_+\circ\DD_X \text{ and } f^! := \DD_X\circ f^+\circ\DD_Y.
\]

If \(f:X\hookrightarrow Y\) is an affine locally closed immersion and \(M\) is a holonomic \(\D_X\)-module, then \(f_!M\) and \(f_+M\) are holonomic \(\D_Y\)-modules concentrated in degree zero. The minimal extension \(f_{!+}M\) is the image of the canonical morphism \(f_!M\rightarrow f_+M\) \cite[Definition~3.4.1]{HTT}.

  Let \(S = \mathbb{A}^m_\C = \spec \C[\lambda_1, \ldots, \lambda_m]\) and let \(M\) be a \(\D_S\)-module. The \emph{Fourier transform} \(\FT(M)\) of \(M\) is a \(\D_S\)-module defined by
  \begin{align*}
    &\FT(M) = M \text{ as a }\C\text{-vector space},\\
    &\lambda_i\bullet x := -\partial_ix \quad \partial_i\bullet x:=  \lambda_ix \quad \text{for any } x\in M.
  \end{align*}
  For an object \(M^\bullet \in D^b(\D_S)\), the Fourier transform \(\FT(M^\bullet)\) of \(M^\bullet\) is defined termwise. Since \(\FT\circ \FT\) is the pullback by the involution \(\lambda \mapsto -\lambda\), Fourier transformation gives a categorical equivalence.

\subsection{Definition and properties of GKZ systems}\label{sec:definition_gkz}
Let \(n \leq m\) be positive integers and let \(A = \begin{pmatrix} \bm{a}_1 & \cdots & \bm{a}_m \end{pmatrix}\in \Z^{n \times m}\) be a full-rank integer matrix.
We write \(\1=(1,\ldots,1)\in\Z^n\).

\begin{definition}\label{def:GKZ}
Let
\[
  D_m:=\C\langle \lambda_1,\ldots,\lambda_m, \partial_1,\ldots,\partial_m\rangle = \D_{\mathbb{A}^m_\C}
\]
be the Weyl algebra. We use the multi-index notation \(\lambda^{\bm{u}}=\prod_{j=1}^m\lambda_j^{u_j}\) and \(\partial^{\bm{u}}=\prod_{j=1}^m\partial_j^{u_j}\) for \(\bm{u}=(u_1,\ldots,u_m)\in\Z_{\geq0}^m\).
The toric ideal \(I_A\) associated with \(A\) is the ideal of \(\C[\partial_1,\ldots,\partial_m]\) generated by
\[
  \partial^{\bm{u}} - \partial^{\bm{v}}, A\bm{u} = A\bm{v},\; \bm{u}, \bm{v} \in \Z_{\geq 0}^m.
\]
We put
\[
  E_i = \sum_{j=1}^{m} a_{ij}\lambda_j\partial_j \quad (i=1,\ldots,n).
\]
For \(\bm{\beta} = (\beta_1, \ldots, \beta_n) \in \C^n\), let \(H_A(\bm{\beta})\) be the left ideal of \(D_m\) generated by \(I_A\) and \(E_1 - \beta_1, \ldots, E_n - \beta_n\). Then the GKZ system with parameter \(\bm{\beta}\) is the \(D_m\)-module
\[
  M_A(\bm{\beta}) := D_m/H_A(\bm{\beta}) = D_m/\left(\sum_{i=1}^nD_m(E_i - \beta_i)+D_m I_A\right).
\]
\end{definition}

We write \(\Z A := \Z \bm{a}_1 + \cdots + \Z\bm{a}_m\).
In the remainder of this section, we assume that \(\Z A = \Z^n\).

\begin{definition}\label{def:positive_homogenious}
  We say that the semigroup \(\Z_{\geq 0} A\) is positive if \(\Z_{\geq 0} A \cap (-\Z_{\geq 0} A)=\{0\}\).

  We say that \(A\) is homogeneous if there exists a group homomorphism \(h:\Z^n \longrightarrow \Z\) such that \(h(\bm{a}_i)=1\) for all \(i=1,\ldots,m\).
\end{definition}
Let \(\phi^A:\mathbb{G}_{m}^n \rightarrow \mathbb{A}^m_\C\) be the morphism corresponding to the ring homomorphism
\[\C[\lambda_1,\ldots,\lambda_m] \rightarrow \C[t_1^{\pm 1},\ldots,t_n^{\pm 1}];\quad \lambda_i \mapsto t^{\boldsymbol{a}_i}.\]
For any \(\bm{\gamma} = (\gamma_1, \ldots, \gamma_n) \in \C^n\), we define the Kummer module as
\[
  \mathcal{K}_{\bm{\gamma}} := \D_{\mathbb{G}_m^n}/\D_{\mathbb{G}_m^n}(t_1\partial_1 - \gamma_1, \ldots, t_n\partial_n - \gamma_n).
\]

\begin{proposition}[{\cite[Corollary~3.7]{SW09}}]\label{thm:GKZ=FT+}
  Assume that \(\Z_{\geq 0} A\) is positive. Let \(\mathrm{sRes}(A)\subset \C^n\) be the set defined in \cite[Definition~3.5]{SW09}. If \(\bm{\beta} \notin \mathrm{sRes}(A)\),
  \[
    \FT(\phi^A_+\mathcal{K}_{-\bm{\beta}-\1}) \cong M_A(\bm{\beta}).
  \]
\end{proposition}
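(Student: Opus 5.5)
The plan is to compute \(\FT(\phi^A_+\mathcal{K}_{-\bm{\beta}-\1})\) explicitly as a cyclic \(D_m\)-module, compare it with \(M_A(\bm{\beta})\), and use \(\bm{\beta}\notin\mathrm{sRes}(A)\) to show that the comparison is an isomorphism.

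First I compute \(\phi^A_+\mathcal{K}_{\bm{\gamma}}\). Factor \(\phi^A\) as the closed embedding \(i\) of the graph, \(\mathbb{G}_m^n\hookrightarrow\mathbb{G}_m^n\times\mathbb{A}^m\), followed by the projection \(\pi\) to \(\mathbb{A}^m\). The direct image \(i_+\mathcal{K}_{\bm{\gamma}}\) is generated by \(\delta(\lambda-t^{\bm a})\) and has explicit relations. The projection along the affine torus is computed by the relative de Rham complex in the \(t\)-variables. Applying \(\FT\) in the \(\lambda\)-variables turns \(\lambda_j-t^{\bm a_j}\) into \(\partial_j-t^{\bm a_j}\). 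The \(\FT\) of the pushforward is then the top cohomology
\[
\C[t^{\pm1}]e^{\sum_j \lambda_j t^{\bm a_j}}\,t^{\bm\gamma}\Big/\sum_i (t_i\partial_{t_i})(\cdots),
\]
together with the vanishing of the lower cohomologies. This identifies \(\FT(\phi^A_+\mathcal{K}_{\bm{\gamma}})\) with \(D_m\otimes_{\C[\theta]}\) of a twisted module over \(\C[\Z A]\). Here \(\theta=(E_1,\ldots,E_n)\), and the semigroup ring \(\C[\partial]/I_A\cong\C[\Z_{\ge0}A]\) embeds in the Laurent ring \(\C[\Z^n]\) via \(\partial_j\mapsto t^{\bm a_j}\); this uses \(\Z A=\Z^n\).

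Next I write down the comparison map. The class of \(t^{\bm\gamma}\,dt/t\) generates a cyclic submodule that is annihilated by \(I_A\), since \(\partial^{\bm u}\) acts as multiplication by \(t^{A\bm u}\). It is also annihilated by \(E_i-\beta_i\) when \(\bm\gamma=-\bm\beta-\1\); this comes from integration by parts, and the shift by \(\1\) reflects the twist by \(\omega\) in the transfer bimodule. This gives a natural map \(M_A(\bm\beta)\to\FT(\phi^A_+\mathcal{K}_{-\bm\beta-\1})\). Equivalently, the map is \(M_A(\bm\beta)\to D_m\otimes^L_{\C[\theta]}\C[\Z^n]_{\bm\beta}\), which is the Euler--Koszul complex of \(\C[\Z^n]\) induced from that of \(S_A=\C[\Z_{\ge0}A]\) via \(S_A\hookrightarrow\C[\Z^n]\).

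The last step is to show this map is an isomorphism by Euler--Koszul homology. Positivity makes \(S_A\) a positively graded toric ring, so the Matusevich--Miller--Walther theory applies. The cone of \(S_A\to\C[\Z^n]\) is filtered by \(\Z^n\)-graded \(S_A\)-modules \(N\) whose degrees lie in \(\Z^n\setminus\Z_{\ge0}A\). The Euler--Koszul homology \(H_\bullet(N,\bm\beta)\) vanishes unless \(-\bm\beta\) lies in the quasi-degrees of the local cohomology or of the quotient. The set \(\mathrm{sRes}(A)\) of strongly resonant parameters is defined in \cite[Definition~3.5]{SW09} precisely so that \(\bm\beta\notin\mathrm{sRes}(A)\) excludes these quasi-degrees. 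Hence the cone is acyclic, and \(H_0\) gives \(M_A(\bm\beta)\cong\FT(\phi^A_+\mathcal{K}_{-\bm\beta-\1})\).

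The main obstacle I expect is this vanishing step: matching \(\mathrm{sRes}(A)\) exactly with the quasi-degree condition, and keeping the sign and shift conventions straight (\(-\bm\beta-\1\) versus \(\bm\beta\)). Since the statement is quoted from \cite[Corollary~3.7]{SW09}, I would defer that bookkeeping to their proof. I would only check that the conventions for \(\FT\) and for \(\mathcal{K}_{\bm\gamma}\) in the present paper agree with theirs.
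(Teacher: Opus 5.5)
Your outline is the Schulze--Walther argument itself: the Fourier transform of the direct image is treated as a twisted Gauss--Manin system, i.e.\ the Euler--Koszul $H_0$ of $\C[\Z^n]$, and compared with that of $S_A$, with $\bm{\beta}\notin\mathrm{sRes}(A)$ killing the Euler--Koszul homology of $\C[\Z^n]/S_A$; the paper gives no argument beyond citing \cite[Corollary~3.7]{SW09}, so deferring that vanishing step is exactly what the paper does. One bookkeeping correction: the generator must be the class of $t^{\bm{\gamma}}\,dt_1\wedge\cdots\wedge dt_n$ (this is where the $\omega$-twist enters), because integration by parts gives $E_i\,[t^{\bm{\gamma}}dt/t]=-\gamma_i\,[t^{\bm{\gamma}}dt/t]$ for $dt/t=\frac{dt_1}{t_1}\wedge\cdots\wedge\frac{dt_n}{t_n}$, so with $\bm{\gamma}=-\bm{\beta}-\1$ the class you wrote is killed by $E_i-\beta_i-1$ and gives a map from $M_A(\bm{\beta}+\1)$ rather than $M_A(\bm{\beta})$ --- precisely the shift the paper's remark on \cite{reichelt} concerns.
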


We omit the definition of \(\mathrm{sRes}(A)\) and only use the following property.
\begin{proposition}[{\cite[Lemma 1.10]{reichelt}}]\label{thm:sRes}
   There exists \(\bm{\delta}_A \in \Z_{\geq 0} A\) such that
  \[
    (\bm{\delta}_A + \R_{\geq 0} A) \cap \mathrm{sRes}(A) = \emptyset.
  \]
\end{proposition}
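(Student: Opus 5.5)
The plan is to unwind the definition of \(\mathrm{sRes}(A)\) from \cite[Definition~3.5]{SW09} and show that it lies in a region where some facet inequality of the cone \(\R_{\geq0}A\) is violated by a bounded amount. Choosing \(\bm{\delta}_A\) deep inside the cone then pushes the whole translate \(\bm{\delta}_A+\R_{\geq0}A\) out of that region. Since \(A\) has full rank \(n\) and \(\Z_{\geq0}A\) is positive, \(C:=\R_{\geq0}A\) is a full-dimensional pointed rational polyhedral cone. Let \(h_1,\ldots,h_r\) be the primitive integral linear forms of its facets, so that \(C=\{\bm{x}:h_k(\bm{x})\geq0\ \forall k\}\), and extend each \(h_k\) \(\C\)-linearly to \(\C^n\).

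The first and main step is a structural description of \(\mathrm{sRes}(A)\). In \cite{SW09} a parameter is strongly resonant if it lies in a set built from a column \(\bm{a}_j\) and a face \(\tau\) of \(C\) not containing \(\bm{a}_j\). Such a set has the form \(\bigl(-(\Z_{\geq0}+1)\bm{a}_j+\Z A\bigr)\cap(\text{integer points}+\C\tau)\), subject to the condition that \(\bm{\beta}+\bm{a}_j\) is not moved into \(\Z_{\geq0}A+\C\tau\). I would prove the following claim for each such piece: there are a facet form \(h_k\), vanishing on \(\tau\) and positive on \(\bm{a}_j\), and a constant \(c\), such that every \(\bm{\beta}\) in the piece satisfies \(\operatorname{Re}h_k(\bm{\beta})\le c\).

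The mechanism I expect is the following. Modulo \(\C\tau\), the parameter is an integer point. The obstruction to \(\bm{\beta}\) being nonresonant is that this integer point is not reachable from \(\Z_{\geq0}A\) in the quotient by \(\tau\), and for a pointed cone this can only happen when \(h_k\) is at most some fixed bound. The natural candidate for the bound is \(c=\max_j h_k(\bm{a}_j)\) times a constant coming from the finite index of the saturation, i.e. the holes of the semigroup \(\Z_{\geq0}A\) inside \(\Z^n\cap C\). Establishing this bound uniformly over the finitely many pairs \((\bm{a}_j,\tau)\) is the obstacle. I would handle it with the standard fact that \((\Z^n\cap C)\setminus\Z_{\geq0}A\) lies within bounded distance of the boundary of \(C\). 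This fact also applies to the quotient semigroup modulo each face, and finiteness of the set of faces makes the constants uniform.

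Granting the claim, put \(c_{\max}\) equal to the maximum of the finitely many constants. Take \(\bm{\delta}_A:=N\sum_{j=1}^m\bm{a}_j\in\Z_{\geq0}A\) with \(N\) a positive integer. Every \(h_k\) is nonnegative on all \(\bm{a}_j\) and positive on at least one of them, since the \(\bm{a}_j\) span \(\R^n\). Hence \(h_k(\bm{\delta}_A)\geq N\), and \(\bm{\delta}_A\) lies in the interior of \(C\). For \(\bm{\beta}\in\bm{\delta}_A+\R_{\geq0}A\) we get \(h_k(\bm{\beta})\geq h_k(\bm{\delta}_A)\geq N\) for every \(k\). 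Choosing \(N>c_{\max}\) therefore excludes every strongly resonant piece, which gives \((\bm{\delta}_A+\R_{\geq0}A)\cap\mathrm{sRes}(A)=\emptyset\).
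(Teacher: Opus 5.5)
The paper gives no proof of this statement; it is quoted from \cite[Lemma~1.10]{reichelt}, so your argument has to stand on its own. Your last paragraph is fine. If every strongly resonant parameter satisfies $\operatorname{Re}h_k(\bm\beta)\le c$ for some facet form $h_k$, then $\bm\delta_A=N\sum_j\bm a_j$ with $N>c_{\max}$ works. The gap is in the first step, which carries all the content. You never write down $\mathrm{sRes}(A)$ correctly: as written, $-(\Z_{\geq0}+1)\bm a_j+\Z A$ is just $\Z A$. The half-space bound is then supported only by an ``expected mechanism''. That mechanism (holes of $\Z_{\geq0}A$ lying near $\partial C$, and a constant like $\max_j h_k(\bm a_j)$ times an index) is not where the bound comes from. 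So the claim remains unproved.

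The missing input is the actual shape of the set:
$\mathrm{sRes}(A)=\bigcup_j\bigl(-(\Z_{\geq0}+1)\bm a_j-\mathrm{qdeg}(S_A/\partial_jS_A)\bigr)$.
Here $S_A=\C[\partial_1,\ldots,\partial_m]/I_A$ is graded by $\deg\partial_j=\bm a_j$, and $\mathrm{qdeg}$ is the Zariski closure of the set of degrees. You also need a structural fact: $\mathrm{qdeg}(S_A/\partial_jS_A)$ is a finite union of affine spaces $\bm b+\C\tau$. In each, $\bm b$ is a degree of the module, so $\bm b\in\Z_{\geq0}A$, and $\tau$ is a face with $\bm a_j\notin\tau$. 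This holds because the monomial quotient $S_A/\partial_jS_A$ has a toric filtration whose subquotients are shifted copies of the semigroup rings $\C[\Z_{\geq0}A\cap\tau]$. Since $\partial_j$ kills every subquotient, $\bm a_j\notin\tau$.

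Now pick a facet form $h_k$ with $h_k|_\tau=0$ and $h_k(\bm a_j)>0$. On each piece $-(\ell+1)\bm a_j-\bm b-\C\tau$, $h_k$ takes the constant value $-(\ell+1)h_k(\bm a_j)-h_k(\bm b)\le -h_k(\bm a_j)$. So your claim follows at once, in fact with $c<0$. The bound comes from the shift by $-(\Z_{\geq0}+1)\bm a_j$ and the face structure of quasi-degrees, and no statement about holes of the semigroup is needed.
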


Similar results also hold for \(\phi^A_!\).

\begin{proposition}[{\cite[Proposition 1.14]{reichelt}}]\label{thm:GKZ=FT!}
  Assume that \(A\) is homogeneous. Let \(\DD\mathrm{sRes}(A)\subset \C^n\) be the set defined in \cite[p. 920]{reichelt}. Then
  \[
    \FT(\phi^A_!\mathcal{K}_{-\bm{\beta}-\1}) \cong M_A(\bm{\beta})
  \]
  for all \(\bm{\beta} \in \Q^n \setminus (\DD \mathrm{sRes}(A) \cap \Q^n)\).
\end{proposition}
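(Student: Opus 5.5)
This is a cited result (Reichelt, Proposition 1.14). The plan is to derive it from Proposition \ref{thm:GKZ=FT+} by duality, and I take the set \(\DD\mathrm{sRes}(A)\) to be defined precisely so that this argument goes through.

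\textbf{Step 1: dualize the \(!\)-pushforward.} By definition \(\phi^A_! = \DD\circ\phi^A_+\circ\DD\). The dual of a Kummer module is again a Kummer module: \(\DD_{\mathbb G_m^n}\mathcal K_{\bm\gamma}\cong\mathcal K_{-\bm\gamma}\), which one checks from the Koszul resolution of \(\mathcal K_{\bm\gamma}\) by the commuting operators \(t_i\partial_i-\gamma_i\). Since \(\phi^A\) is affine, \(\phi^A_+\) is exact, and therefore
\[
\phi^A_!\mathcal K_{-\bm\beta-\1}\cong \DD\,\phi^A_+\mathcal K_{\bm\beta+\1}.
\]

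\textbf{Step 2: move \(\DD\) past \(\FT\).} Homogeneity of \(A\) makes \(\phi^A_+\mathcal K\) monodromic, that is, equivariant for the diagonal \(\mathbb G_m\)-action on \(\mathbb A^m\). For monodromic holonomic modules Brylinski's theorem gives
\[
\FT\circ\DD\cong\DD\circ\FT
\]
up to pullback by \(\lambda\mapsto-\lambda\). That pullback is harmless here, because it preserves \(M_A\) up to an isomorphism rescaling the generator. Hence
\[
\FT(\phi^A_!\mathcal K_{-\bm\beta-\1})\cong\DD\,\FT(\phi^A_+\mathcal K_{\bm\beta+\1}).
\]

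\textbf{Step 3: identify the right-hand side with a dual GKZ system.} Write \(\bm\beta+\1=-\bm\beta'-\1\), so \(\bm\beta'=-\bm\beta-2\cdot\1\). Proposition \ref{thm:GKZ=FT+} turns the right-hand side into \(\DD M_A(\bm\beta')\), provided \(\bm\beta'\notin\mathrm{sRes}(A)\). Homogeneity means \(A\) contains the row \(\1\) after a change of coordinates, so \(\Z_{\ge0}A\) is positive. We then need the duality formula for GKZ systems,
\[
\DD M_A(\bm\beta')\cong M_A(-\bm\beta'-\bm\epsilon_A)
\]
with \(\bm\epsilon_A=\sum_j\bm a_j\) up to a sign convention, valid when \(\bm\beta'\) avoids a resonance set. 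This is the formula of Matsumoto--Matsubara-Heo and of Walther's duality for Cohen--Macaulay toric rings. In general \(\DD\) of a GKZ system need not be GKZ, so the statement must be restricted to the complement of a set.

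\textbf{Step 4: the excluded set.} Define \(\DD\mathrm{sRes}(A)\) as the set of \(\bm\beta\) for which either \(-\bm\beta-2\cdot\1\in\mathrm{sRes}(A)\) or the duality isomorphism of Step 3 fails. One then matches \(-\bm\beta'-\bm\epsilon_A\) with \(\bm\beta\). Rather than rely on the affine-shift bookkeeping above, I would let Reichelt's convention for \(\FT\) absorb the shifts. Concretely, I would exhibit the explicit morphism \(M_A(\bm\beta)\to\FT(\phi^A_!\mathcal K)\) sending \(1\) to the class of \(\delta\)-type generators, and check that it is an isomorphism exactly off \(\DD\mathrm{sRes}(A)\). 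The rationality assumption \(\bm\beta\in\Q^n\) enters because the duality of Step 3 is proved via mixed Hodge module and regular holonomic arguments. Those arguments require \(\mathcal K\) to have finite-order monodromy.

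\textbf{Main obstacle.} The main obstacle is Step 3: establishing \(\DD M_A\cong M_A(\cdot)\) and controlling its exceptional set. The other steps are formal. My first attempt would work at the level of \(\phi_+\) on the torus side. There I would use that \(\phi^A_+\mathcal K\) and \(\phi^A_!\mathcal K\) agree when \(\bm\beta\) lies deep in the interior of the cone, by Proposition \ref{thm:sRes}. I would then transport this to all non-resonant rational \(\bm\beta\) using contiguity isomorphisms.
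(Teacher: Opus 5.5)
\paragraph{Verdict.} The proposal has a genuine gap: Step~3 is not proved, and in the form you state it, it is no easier than the proposition itself. The paper gives no proof of this statement. It quotes Reichelt's Proposition~1.14, and its Remark notes that Reichelt's Kummer parameters are off, so your argument has to stand on its own.

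\paragraph{What works.} Steps~1 and~2 are sound. We have $\DD\mathcal K_{\bm{\gamma}}\cong\mathcal K_{-\bm{\gamma}}$. For monodromic modules $\FT$ commutes with $\DD$ up to the antipode. The antipode preserves $M_A(\bm{\beta})$ because $A$ is homogeneous.

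\paragraph{The gap in Step~3.} Step~3 carries all of the content.
\begin{itemize}
\item As written, the bookkeeping fails. With $\bm{\beta}'=-\bm{\beta}-2\cdot\1$, the formula $\DD M_A(\bm{\beta}')\cong M_A(-\bm{\beta}'-\bm{\epsilon}_A)$ yields $M_A(\bm{\beta}+2\cdot\1-\bm{\epsilon}_A)$. No convention for $\FT$ absorbs this.
\item The correct fix is that $\Z A=\Z^n$ lets you replace $\mathcal K_{\bm{\beta}+\1}$ by $\mathcal K_{\bm{\beta}+\bm{\epsilon}_A-\1}$, i.e.\ take $\bm{\beta}'=-\bm{\beta}-\bm{\epsilon}_A$.
\item Once this is done, Steps~1--2 together with Propositions~\ref{thm:GKZ=FT+} and~\ref{thm:sRes} show the following. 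The statement for $\bm{\beta}$ deep in $-(\R_{\geq0}A)^\circ$ is \emph{equivalent} to $\DD M_A(\bm{\beta}')\cong M_A(-\bm{\beta}'-\bm{\epsilon}_A)$ for $\bm{\beta}'$ deep in the positive cone. So Step~3 is a restatement, not a reduction.
\end{itemize}
Neither source you name supplies it.
\begin{itemize}
\item The Cohen--Macaulay duality you cite does not apply. The paper uses the proposition for normalizations $\widetilde A$, which have the same toric ideal as $A$. Take $n=2$, $d=4$ and extra columns $(3,1)^T,(1,3)^T$: this gives Macaulay's quartic $\C[s^4,t^4,s^3t,st^3]$, which is not Cohen--Macaulay.
\item The version for non-resonant parameters is no help either. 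For those parameters $\phi^A_!\mathcal K\to\phi^A_+\mathcal K$ is already an isomorphism. The paper needs exactly the resonant classes, where the minimal extension in Proposition~\ref{prop:contiguity-image} differs from both.
\end{itemize}

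\paragraph{The exceptional set.} Declaring $\DD\mathrm{sRes}(A)$ to be wherever your argument breaks makes the statement tautological. What the paper actually uses is Proposition~\ref{thm:DsRes}: the isomorphism holds on $-(\R_{\geq0}A)^\circ$, in particular at the parameters $T(\bm{\beta}-kd\1)$, $k\gg0$, of Lemma~\ref{lemma:non-resonance}. Nothing in your proposal establishes this.

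\paragraph{The fallback argument.} The fallback in your last paragraph rests on a misconception. Because $\Z A=\Z^n$, the module $\mathcal K_{-\bm{\beta}-\1}$ depends only on $\bm{\beta}$ modulo $\Z^n$. Hence so do $\phi^A_!\mathcal K$, $\phi^A_+\mathcal K$ and the canonical map between them, so they cannot ``agree when $\bm{\beta}$ lies deep in the cone''. Propositions~\ref{thm:GKZ=FT+} and~\ref{thm:sRes} only give $M_A(\bm{\beta})\cong\FT(\phi^A_+\mathcal K_{-\bm{\beta}-\1})$ there. Moving along contiguity isomorphisms can only propagate an identification with $\FT(\phi^A_+\mathcal K)$. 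That module differs from $\FT(\phi^A_!\mathcal K)$ in precisely the resonant cases that matter.

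\paragraph{What is missing.} You need one of the following.
\begin{itemize}
\item A real computation of $\DD M_A(\bm{\beta}')$ at deep parameters for non-Cohen--Macaulay toric rings. One route is holonomic duality of Euler--Koszul complexes, where you would have to control the contributions of the non-top $\mathrm{Ext}$ modules of the toric ring.
\item An explicit morphism between $M_A(\bm{\beta})$ and $\FT(\phi^A_!\mathcal K_{-\bm{\beta}-\1})$, shown to be an isomorphism on the negative cone. You mention this but do not carry it out.
\end{itemize}
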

\begin{proposition}[{\cite[p.920]{reichelt}}]\label{thm:DsRes}
Let \((\R_{\geq 0}A)^\circ\) be the interior of the positive cone \(\R_{\geq 0} A = \R_{\geq 0}\bm{a}_1 + \cdots + \R_{\geq 0}\bm{a}_m\). Then
      \[
    \DD \mathrm{sRes}(A) \cap -(\R_{\geq 0}A)^\circ = \emptyset.
  \]
\end{proposition}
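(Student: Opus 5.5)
The plan is to unwind the definition of \(\DD\mathrm{sRes}(A)\) from \cite[p.~920]{reichelt} and reduce the claim to a statement about \(\mathrm{sRes}(A)\) and the face structure of the cone \(\R_{\geq 0}A\). I expect \(\DD\mathrm{sRes}(A)\) to arise from \(\mathrm{sRes}(A)\) through holonomic duality. Indeed \(\phi^A_! = \DD\circ\phi^A_+\circ\DD\), and \(\DD_{\mathbb G_m^n}\mathcal K_{\bm\gamma}\cong\mathcal K_{-\bm\gamma}\). Fourier transform interchanges duality with duality up to the sign involution. Moreover, for homogeneous \(A\), the dual of \(M_A(\bm\beta)\) is again a GKZ system, with parameter an affine-linear reflection of \(\bm\beta\). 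Concretely, I expect \(\DD\mathrm{sRes}(A)=\{\bm\beta : -\bm\beta-\bm\varepsilon\in\mathrm{sRes}(A)\}\) for a fixed shift \(\bm\varepsilon\) coming from the weights. The first step is therefore to record this description precisely from \cite{reichelt}, so that the statement becomes
\[
  \mathrm{sRes}(A)\cap\bigl(-\bm\varepsilon+(\R_{\geq0}A)^\circ\bigr)=\emptyset .
\]

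The second step uses the explicit shape of \(\mathrm{sRes}(A)\) from \cite[Definition~3.5]{SW09}. There, strongly resonant parameters are built from the faces of \(\R_{\geq0}A\). Roughly, \(\bm\beta\) is strongly resonant when some column \(\bm a_j\) and some face \(F\) give a lattice-point condition, which places \(\bm\beta\) in a set of the form \(-(\Z_{\geq0}+1)\bm a_j+(\text{lattice points})+\C F\), with \(\bm a_j\) not in \(F\). I will write each such piece as contained in a translate of a hyperplane \(\{\ell_F=c\}\), where \(\ell_F\) is a primitive supporting linear form of a facet containing \(F\), nonnegative on \(\R_{\geq0}A\) and vanishing on \(F\). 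I will then bound \(c\) using integrality, \(\ell_F(\Z A)\subset\Z\). This reduces the proposition to a one-dimensional inequality for each facet: on every strongly resonant parameter (after the reflection), some facet form \(\ell_F\) takes a value \(\le 0\). Since \((\R_{\geq0}A)^\circ=\{\ell_F>0\ \text{for all facets } F\}\), this gives disjointness.

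The third step is this inequality itself, and I expect it to be the main obstacle. It requires matching the shift \(\bm\varepsilon\) from duality against the offsets in the definition of \(\mathrm{sRes}\) exactly. An off-by-one error in the normalization \(-\bm\beta-\1\) of Propositions~\ref{thm:GKZ=FT+} and~\ref{thm:GKZ=FT!} would produce only \(\ell_F\le 1\), which is not enough. My first attempt is to check the inequality in the simplest case \(A=(1\ 1)\), where both sets can be computed by hand (one resonant point per facet). I will fix the shift there and then argue facet by facet in general, using homogeneity to choose \(\ell_F\) compatibly with \(h\).
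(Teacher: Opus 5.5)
The paper gives no proof of this statement. It is quoted from \cite[p.~920]{reichelt}, and its whole content is a check against Reichelt's explicit definition of $\DD\mathrm{sRes}(A)$. The paper omits that definition, and your proposal never pins it down. Your first step replaces the definition by a guess, and your third step, the only substantive inequality, is left open (``fix the shift on $A=(1\ 1)$, then argue facet by facet''). As it stands there is no proof.

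The guess is also shakier than you suggest. For $A=(1\ 1)$, in the coordinates $u=\lambda_1+\lambda_2$, $v=\lambda_1-\lambda_2$, one finds $M_A(\beta)\cong\mathcal O_v\boxtimes\D_u/\D_u(u\partial_u-\beta)$. Consequently:
\begin{itemize}
\item the $+$-comparison of Proposition~\ref{thm:GKZ=FT+} fails exactly for $\beta\in\Z_{\le-1}$;
\item the $!$-comparison fails exactly for $\beta\in\Z_{\ge0}$;
\item $\DD M_A(\beta)\cong M_A(-\beta-1)$.
\end{itemize}
The shift suggested by formal adjoints of the Euler operators, $\sum_j\bm a_j=2$, would put $-1=-2-(-1)$ into $\DD\mathrm{sRes}(A)\cap\R_{<0}$. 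The generators of $H_A(\bm\beta)$ do not commute, so duals are not computed by formal adjoints. The correct shift is governed by the canonical module of $\C[\Z_{\ge0}A]$, which need not be principal. So one example cannot determine a uniform $\bm\varepsilon$.

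The more serious problem is your second step. The lattice offsets in $\mathrm{sRes}(A)$ are quasi-degrees of $S_A/\partial_jS_A$. When $\Z_{\ge0}A$ is not saturated, holes of the semigroup produce offsets that integrality of $\ell_F$ does not bound, and $\mathrm{sRes}(A)$ can meet $(\R_{\ge0}A)^\circ$. This is why Proposition~\ref{thm:sRes} needs the shift $\bm\delta_A$.

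Take the twisted quartic $A=\left(\begin{smallmatrix}1&1&1&1\\0&1&3&4\end{smallmatrix}\right)$, which is homogeneous and positive with $\Z A=\Z^2$. The parameter $(1,2)$ is rank-jumping: $\rank M_A(1,2)=5>4=\operatorname{vol}(A)$. The Fourier side $\FT(\phi^A_+\mathcal K_{-\bm\beta-\1})$ depends only on $\bm\beta$ modulo $\Z^2$. It has rank $4$, because it agrees with $M_A$ at integral parameters deep in the cone. Hence $(1,2)\in\mathrm{sRes}(A)$, and $(1,2)$ is an interior point of $\R_{\ge0}A$.

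For every $\bm\varepsilon\in\R_{\ge0}A$ (which covers any shift built from the columns), the reflected point $-\bm\varepsilon-(1,2)$ lies in $-(\R_{\ge0}A)^\circ$. So your reduction turns the proposition into a false statement. Non-saturated semigroups do occur among the Fermat deformations used in this paper. For example, with $d=5$ and columns $(5,0),(0,5),(2,3),(4,1)$, the point $(1,4)$ lies in $\Z A\cap\R_{\ge0}^2$ but not in $\Z_{\ge0}A$.

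What you actually need is the set defined on p.~920 of \cite{reichelt}. The disjointness must be verified against that set itself, not against a reflection of $\mathrm{sRes}(A)$.
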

\begin{remark}
The notation \(\mathcal{O}_{T'}y^\beta\) in \cite{reichelt}
corresponds to \(\mathcal{K}_{\bm{\beta}}\) in our notation.
Theorem~1.9 of loc.~cit. asserts, in our notation, that
\[
  M_A(\bm{\beta})
  \cong
  \FT(\phi^A_+\mathcal{K}_{\bm{\beta}}),
\]
however this is incorrect. Indeed, the Kummer module used in \cite[Corollary~3.7]{SW09} is \(\mathcal{K}_{-\bm{\beta}-\1}\).
The same error occurs in subsequent statements, in particular, \cite[Proposition~1.14]{reichelt}.
\end{remark}

\section{The Gauss--Manin Connection of a Family of Projective Hypersurfaces}
\subsection{Relative de Rham Cohomology of the Fermat deformation}\label{sec:fermat-de-rham}

Let \(d,m,n\) be integers satisfying \(d,m\geq n\geq 2\).
For each $j=n+1,\ldots,m$, let
\[
\bm{a}_j
=
\begin{pmatrix}
a_{1j} & \cdots & a_{nj}
\end{pmatrix}^{T}
\in \mathbb{Z}_{\geq 0}^{n}
\]
be a column vector satisfying \(\sum_{i=1}^{n}a_{ij}=d\). We also set
\[
\bm{a}_j=d\bm{e}_j,
\qquad
j=1,\ldots,n,
\]
where \(\bm{e}_1,\ldots,\bm{e}_n\) denote the standard basis vectors of \(\Z^{n}\).

Set \(S = \mathbb{A}^{m}_{\C} = \spec\C[\lambda_1, \ldots, \lambda_m], X = \mathbb{P}^{n-1}_S = \operatorname{Proj}_{S}\mathcal{O}_{S}[X_1,\ldots,X_n].\) Let \(Y\subset X\) be the family of projective hypersurfaces defined by the homogeneous polynomial
\begin{equation}
\label{eq:fermat-deformation}
F_A = \sum_{j=1}^{m}\lambda_jX^{\bm{a}_j} = \sum_{i=1}^{n}\lambda_iX_i^d + \sum_{j=n+1}^m \lambda_jX_1^{a_{1j}}\cdots X_n^{a_{nj}}
\end{equation}
of degree \(d\), that is, \(Y = \operatorname{Proj}\left(\mathcal{O}_S[X_1,\ldots,X_n]/(F_A)\right)\). We refer to \(Y/S\) as the Fermat deformation.

Let \(i:Y\hookrightarrow X\) and \(p:X\longrightarrow S\) denote the closed embedding and the projection, respectively. We define the primitive part of $H^i p_+R\Gamma_Y\mathcal{O}_X$ by
\[
(H^i p_+R\Gamma_Y\mathcal{O}_X)^{\mathrm{prim}} := \Ker(H^i p_+R\Gamma_Y\mathcal{O}_X\longrightarrow H^i p_+\mathcal{O}_X).
\]
The natural morphism on the right is surjective for
\(i\neq 1-n\) and is zero for \(i=1-n\).

We review the algebraic Dwork cohomology developed in \cite{DMSS}. 
For \(0\leq i\leq n\), define
\[
\Omega^i(\mathbb{A}^{n}_S/S)^{(d)} := \bigoplus_{\substack{I\subset\{1,\ldots,n\},\, |I|=i\\\bm{\nu}\in\mathbb{Z}_{\geq0}^{n},\,|\bm{\nu}|+|I|\equiv0\;(\mathrm{mod}\ d)
}}\mathcal{O}_{S}X^{\bm{\nu}} dX_I,
\]
where, for \(I=\{j_1<\cdots<j_i\}\), \(dX_I=dX_{j_1}\wedge\cdots\wedge dX_{j_i}\). We define the algebraic Dwork cohomology of \(Y/S\) by
\[
  H^i_{\mathrm{Dw}}(Y/S) := H^i\left(\Omega^\bullet\left(\mathbb{A}^{n}_S/S\right)^{(d)}, d+dF_A\wedge\right),
\]
and we denote the cohomology class of a cocycle \(\sum_{\nu}c_{\nu}X^{\bm{\nu}}dX_I\) by \(\left[\sum_{\nu}c_{\nu}X^{\bm{\nu}}dX_I \right]\). \(H^i_{\mathrm{Dw}}(Y/S)\) has a structure of a \(\D_S\)-module. In particular, the action of \(\partial_j\) on \(H^n_{\mathrm{Dw}}(Y/S)\) is given by \(\partial_j\bullet \left[ X^{\bm{\nu}}dX_I \right] = \left[ X^{\bm{\nu}+\bm{a}_j}dX_I \right]\).

\begin{theorem}
[{\cite[Thm.~0.4]{DMSS}}]
\label{thm:DMSS}
There is a canonical isomorphism of \(\D_S\)-modules
\[
\left(H^i p_+R\Gamma_Y\mathcal{O}_X\right)^{\mathrm{prim}} \cong H^{i+n-1}_{\mathrm{Dw}}(Y/S).
\]
\end{theorem}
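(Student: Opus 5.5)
This statement is quoted from \cite[Thm.~0.4]{DMSS}, so the plan is to recall how one proves it and check that it applies verbatim to the Fermat deformation. The strategy has three stages. First, reduce the projective problem to an affine cone. Second, pass from the cone to an exponentially twisted de Rham complex on \(\mathbb{A}^n_S\). Third, isolate the degree-\(d\) invariant part, which is exactly the complex \(\Omega^\bullet(\mathbb{A}^n_S/S)^{(d)}\).

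\textbf{Step 1 (cone reduction).} Let \(U_0=\mathbb{A}^n_S\setminus\{0\}\times S\to X\) be the \(\mathbb{G}_m\)-bundle, and let \(C=\{F_A=0\}\subset\mathbb{A}^n_S\) be the affine cone over \(Y\). By the Gysin/localization triangle \(R\Gamma_Y\mathcal{O}_X\to\mathcal{O}_X\to j_+\mathcal{O}_{X\setminus Y}\), the primitive part of \(H^ip_+R\Gamma_Y\mathcal{O}_X\) equals, up to shift, the kernel-type part of the cohomology of \(X\setminus Y\) over \(S\) modulo the image of \(\mathcal{O}_X\). Since \(\mathbb{A}^n\setminus C\to X\setminus Y\) is a \(\mathbb{G}_m\)-bundle, its relative de Rham cohomology is computed by \(\mathbb{G}_m\)-weight-zero forms. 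Concretely, \(H^\bullet(X\setminus Y/S)\) is computed by the homogeneous degree-\(0\) part of the complex \(\Omega^\bullet_{\mathbb{A}^n_S/S}[F_A^{-1}]\), contracted with the Euler field.

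\textbf{Step 2 (Dwork/Laplace trick).} Next I would use the isomorphism between the twisted complex \((\Omega^\bullet[F_A^{-1}],d)\), restricted to weight \(0\), and \((\Omega^\bullet(\mathbb{A}^n_S/S),d+dF_A\wedge)\) on the appropriate graded pieces. A form \(\omega\) homogeneous of weight \(kd\) (counting \(\deg X_i=1\) and \(\deg dX_i=1\)) corresponds to \(\omega/F_A^{k}\), up to the factor \((k-1)!\) and sign. This is the classical Griffiths–Dwork dictionary, and it is what forces the congruence \(|\bm{\nu}|+|I|\equiv0\pmod d\). Geometrically, \(e^{F_A}\) is integrated along the extra \(\mathbb{G}_m\)-direction \(y\mapsto yF_A\). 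This identifies the pushforward of \(\mathcal{O}\) twisted by \(e^{yF_A}\) on \(\mathbb{A}^n\times\mathbb{G}_m\) with the localization along \(F_A\). One must also check that the \(\D_S\)-actions match. On the Dwork side \(\partial_j\) acts by multiplication by \(X^{\bm a_j}\) (the derivative of \(e^{F_A}\)). On the \(F_A^{-1}\) side it acts by \(-k\,X^{\bm a_j}/F_A^{k+1}\), and the normalization above makes these agree.

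\textbf{Step 3 (degree bookkeeping and primitivity).} I would show that the twisted complex on \(\mathbb{A}^n_S\) is acyclic except in top degree \(n\), and possibly \(n-1\) for non-generic fibres. Then I would match the shift \(i\mapsto i+n-1\). Finally I would verify that the class of \(\mathcal{O}_X\), namely the hyperplane powers, is exactly what is killed, since the weight-zero Gysin image lands in \(\mathcal{O}_X\)-cohomology. This step, together with keeping control over non-smooth fibres where no fibrewise argument is available, is the main obstacle. I would handle it by working entirely with \(\D_S\)-module pushforwards rather than fibrewise cohomology. Every step above (localization triangle, \(\mathbb{G}_m\)-invariants, Laplace/Fourier identification) is a functorial isomorphism in \(D^b(\D_S)\), so no smoothness is needed. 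This is the route taken in \cite{DMSS}, and in the paper one simply invokes their Theorem~0.4.
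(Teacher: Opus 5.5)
Like the paper, your proposal ultimately just imports \cite[Thm.~0.4]{DMSS}, which applies directly to the family defined by $F_A$ over $S=\mathbb{A}^m_\C$; the paper gives no proof of its own, so relying on the citation is the same approach. Your outline of why the theorem holds is only a heuristic recollection, not a proof: for instance, $\omega\mapsto(-1)^k(k-1)!\,\omega/F_A^k$ is not an isomorphism of complexes, since its kernel is the image of the null-homotopic operator $\omega\mapsto(F_A+k)\omega$ on weight-$kd$ forms. Nothing beyond the citation is needed here.
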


Let \(\Delta_A\in\mathbb{C}[\lambda_1,\ldots,\lambda_m]\) denote the discriminant of \(F_A\), and let \(U = \spec \C[\lambda_1,\ldots,\lambda_m,\Delta_A^{-1}]\). Then the projection \(p_U\circ i_U:Y_U = Y \times_S U \longrightarrow U\) is smooth. The algebraic de Rham cohomology
\[
  H_{\dR}^{n-2}(Y_U/U) := R^{n-2}(p_U \circ i_U)_*\Omega^\bullet_{Y_U/U}
\]
has a \(\D_U\)-module structure given by the Gauss--Manin connection \cite{KO}, and
\[
H^1\left((p_U)_+R\Gamma_{Y_U}\mathcal{O}_{X_U}\right) \cong H_{\mathrm{dR}}^{n-2}(Y_U/U)
\]
as \(\mathcal{D}_U\)-modules \cite[Proposition~1.4]{DMSS}. We define the primitive part of the de Rham cohomology as the kernel of the Gysin homomorphism
\[
  H_{\dR}^{n-2}(Y_U/U)^{\mathrm{prim}} := \Ker\!\left(H_{\dR}^{n-2}(Y_U/U) \rightarrow H_{\dR}^{n}(X_U/U)\right).
\]
Since \(Y_U\) is a smooth divisor in \(X_U\), we have \(R\Gamma_{Y_U}\mathcal{O}_{X_U}[1] \cong (i_U)_+\mathcal{O}_{Y_U}\). Consequently, the preceding comparison induces an isomorphism
\begin{equation}\label{eq:comparison}
  H_{\dR}^{n-2}(Y_U/U)^{\mathrm{prim}} \cong H^{n}_{\mathrm{Dw}}(Y/S)|_U.
\end{equation}
\medskip

We next define an action of a finite abelian group on the cohomologies. Let
\[
  \Gamma_A = \left\{(\zeta_1,\ldots, \zeta_n) \in (\C^\times)^n \mid \prod_{i=1}^n\zeta_i^{a_{ij}} = 1 \text{ for all } j = 1, \ldots, m\right\}
\]
and let \(\Delta\) be the diagonal subgroup of \(\Gamma_A\). We set
\[
  H_{n,d} = \{\bm{\gamma}=(\gamma_1,\ldots,\gamma_n)\in\Z^n \mid \sum_{i=1}^n\gamma_i\equiv 0\bmod d\}.
\]
We identify the character group
\(\widehat{\Gamma_A/\Delta}:=\mathrm{Hom}_{\mathrm{Grp}}(\Gamma_A/\Delta,\C^\times)\)
with \(H_{n,d}/\Z A\) via the isomorphism
\begin{align*}
  H_{n,d}/\Z A &\longrightarrow \widehat{\Gamma_A/\Delta},\\
  \overline{\bm{\gamma}} &\longmapsto ((\zeta_1, \ldots, \zeta_n) \longmapsto\prod_{i=1}^n\zeta_i^{\gamma_i}).
\end{align*}

\(\Gamma_A\) acts on \(\mathbb{A}^n_S\) by
\begin{equation}\label{eq:action}
  (\zeta_1, \ldots, \zeta_n) \bullet (X_1, \ldots, X_n) := (\zeta_1X_1, \ldots, \zeta_nX_n).
\end{equation}
This action induces an action of the finite abelian group \(\Gamma_A/\Delta\) on the complex\\
\((\Omega^{\bullet}\left(\mathbb{A}^{n}_S/S\right)^{(d)}, d+dF_A\wedge)\), and hence on \(H^i_{\mathrm{Dw}}(Y/S)\). In particular, \(\Gamma_A/\Delta\) acts on \(H^n_{\mathrm{Dw}}(Y/S)\) by
\[
  (\zeta_1, \ldots, \zeta_n) \bullet  [X_1^{\nu_1}\cdots X_n^{\nu_n}dX_1\wedge \cdots \wedge dX_n] = \zeta_1^{\nu_1+1}\cdots \zeta_n^{\nu_n+1}[X_1^{\nu_1}\cdots X_n^{\nu_n}dX_1\wedge \cdots \wedge dX_n].
\]
Clearly, this action is compatible with the \(\D_S\)-module structure, it gives a decomposition
\[
H^n_{\mathrm{Dw}}(Y/S) = \bigoplus_{\overline{\bm{\gamma}} \in \widehat{\Gamma_A/\Delta}} H^n_{\mathrm{Dw}}(Y/S)_{\overline{\bm{\gamma}}}
\]
as a \(\D_S\)-module, where \(H^n_{\mathrm{Dw}}(Y/S)_{\overline{\bm{\gamma}}}\) denotes the component corresponding to \(\overline{\bm{\gamma}}\in H_{n,d}/\Z A\cong\widehat{\Gamma_A/\Delta}\).

The action in \eqref{eq:action} also induces an action of \(\Gamma_A/\Delta\) on \(Y_U\) over \(U\), and hence \(\Gamma_A/\Delta\) acts on \(H^{n-2}_{\dR}(Y_U/U)^{\mathrm{prim}}\). These actions are compatible with the isomorphism \eqref{eq:comparison}, therefore,
\[
  H_{\dR}^{n-2}(Y_U/U)^{\mathrm{prim}}_{\overline{\bm{\gamma}}} \cong H^{n}_{\mathrm{Dw}}(Y/S)_{\overline{\bm{\gamma}}}|_U.
\]

\begin{lemma}
   The holonomic rank of the component \(H^{n}_{\mathrm{Dw}}(Y/S)_{\overline{\bm{\gamma}}}\) is the number of elements of the set
\begin{equation}\label{eq:totally_nonzero_set}
    \mathfrak{I}_{\bm{\gamma}} = \{\overline{\bm{\beta}} = (\overline{\beta}_1, \ldots, \overline{\beta}_n) \in \Z A/d\Z^n \mid \beta_i + \gamma_i \not\equiv 0 \bmod d \text{ for all }i=1, \ldots, n\}.
\end{equation}
\end{lemma}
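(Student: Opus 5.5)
Since \(U\) is dense in \(S\) and the holonomic rank is the dimension of the fiber of the associated local system at a generic point, I will compute the rank of \(H^n_{\mathrm{Dw}}(Y/S)_{\overline{\bm{\gamma}}}\) at a convenient generic point of \(U\). The natural choice is the Fermat point \(\lambda^0=(1,\ldots,1,0,\ldots,0)\), where \(F_A\) specializes to \(\sum_i X_i^d\), which is smooth, so \(\lambda^0\in U\). On \(U\) the module \(H^n_{\mathrm{Dw}}(Y/S)|_U\) is isomorphic to the primitive de Rham cohomology by \eqref{eq:comparison}, hence is \(\mathcal O_U\)-coherent (a vector bundle with connection). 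Its rank equals the dimension of the fiber at \(\lambda^0\). I would compute this fiber via base change of the Dwork complex, or equivalently via the primitive cohomology of the Fermat hypersurface, in both cases keeping track of the \(\Gamma_A/\Delta\)-isotypic pieces.

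First I would verify that the fiber of \(H^n_{\mathrm{Dw}}\) at \(\lambda^0\) is the top Dwork cohomology of the Fermat polynomial. This holds because the Dwork complex is a Koszul-type complex whose cohomology is concentrated in degree \(n\) over \(U\); its lower cohomologies correspond to non-primitive or vanishing parts, so base change holds. For \(F=\sum X_i^d\), the top cohomology of \((\Omega^\bullet,d+dF\wedge)\) has a basis given by the classes \([X^{\bm{\nu}}dX_1\wedge\cdots\wedge dX_n]\) with \(0\le \nu_i\le d-2\) and \(|\bm{\nu}|+n\equiv 0 \bmod d\). The standard reduction is as follows. Using \(d(gX_I)+dF\wedge gX_I\), the relation \(dX_i^d \cdot X^{\bm{\mu}}\) equals a derivative term, so \((\nu_i+1)X^{\bm{\nu}}\sim -d X^{\bm{\nu}+(d)\bm{e}_i}\) up to sign. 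This lets us reduce each exponent modulo \(d\), and it kills classes with \(\nu_i\equiv d-1\). The resulting space decomposes as a tensor product of one-variable Dwork cohomologies of \(x^d\), each with basis \(x^{k}dx\) for \(0\le k\le d-2\), restricted to the degree condition.

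The key step is the isotypic bookkeeping. The torus character of \(X^{\bm{\nu}}dX_1\wedge\cdots\wedge dX_n\) is \(\bm{\nu}+\1\). Setting \(\bm{\beta}'=\bm{\nu}+\1\) gives exactly the vectors with \(\beta'_i\not\equiv 0 \bmod d\), \(|\bm{\beta}'|\equiv 0 \bmod d\), taken modulo \(d\Z^n\). Since \(\Gamma_A\) is the group annihilated by \(\Z A\supset d\Z^n\), the \(\overline{\bm{\gamma}}\)-component consists of those basis elements with \(\bm{\nu}+\1\equiv-\bm{\gamma}\) (or \(+\bm{\gamma}\), depending on the sign convention of the identification) modulo \(\Z A\). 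Writing \(\bm{\nu}+\1\equiv \bm{\beta}+\bm{\gamma}\) up to sign with \(\overline{\bm{\beta}}\in \Z A/d\Z^n\), the basis elements in the component are in bijection with those \(\overline{\bm{\beta}}\) for which every coordinate \(\beta_i+\gamma_i\not\equiv0 \bmod d\). The degree condition is automatic, because both \(\Z A\) and \(H_{n,d}\) lie in the sum-\(\equiv 0\) lattice. This bijection is exactly \(\mathfrak I_{\bm\gamma}\); I would fix the sign of \(\bm{\gamma}\) to match the convention in \eqref{eq:totally_nonzero_set}, using \(\overline{\bm\beta}\mapsto -\overline{\bm\beta}\) on \(\Z A/d\Z^n\) if needed.

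I expect the main obstacle to be justifying that the rank of the \(\D_S\)-module equals this fiber dimension at \(\lambda^0\). This requires coherence on \(U\) together with compatibility of Dwork cohomology with base change to the point. I would handle it by invoking \eqref{eq:comparison} and the smoothness of \(Y_U\to U\). Then primitive de Rham cohomology is locally free, it commutes with base change, and its \(\Gamma_A/\Delta\)-eigenspaces are locally free of constant rank. The fiber computation at \(\lambda^0\) can then be done directly on the Fermat hypersurface, either by the Griffiths–Dwork description above or by the classical eigenspace decomposition of Fermat cohomology.
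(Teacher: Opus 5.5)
Your proposal is correct and follows essentially the paper's route: the paper likewise identifies the holonomic rank with the rank of the locally free eigen-component \(H^{n-2}_{\mathrm{dR}}(Y_U/U)^{\mathrm{prim}}_{\overline{\bm{\gamma}}}\) on \(U\), and then counts, as in \cite[Lemma~3.1]{K09}, the characters of the Fermat fiber's primitive cohomology lying over \(\overline{\bm{\gamma}}\); these are the ones with every coordinate of \(\bm{\nu}+\1\) nonzero mod \(d\). Two small simplifications: base change to \(\lambda^0\) for the top Dwork cohomology is automatic, since it is a cokernel and \(\otimes\) is right exact; and with the paper's conventions (\(\zeta\) acting by \(\zeta^{\bm{\nu}+\1}\), and \(\overline{\bm{\gamma}}\mapsto\zeta^{\bm{\gamma}}\)) the sign is \(+\bm{\gamma}\), although \(\#\mathfrak{I}_{\bm{\gamma}}=\#\mathfrak{I}_{-\bm{\gamma}}\) via \(\overline{\bm{\beta}}\mapsto-\overline{\bm{\beta}}\) in any case.
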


\begin{proof}
  The holonomic rank of \(H^{n}_{\mathrm{Dw}}(Y/S)_{\overline{\bm{\gamma}}}\) is the rank of \(H^{n-2}_{\mathrm{dR}}(Y_U/U)^{\mathrm{prim}}_{\overline{\bm{\gamma}}}\) as a the locally free \(\mathcal{O}_U\)-module. Then the claim can be proved in the same way as \cite[Lemma~3.1]{K09}.
\end{proof}

\subsection{GKZ systems associated with the Fermat deformation}
Let \(n, m, d\) and \(\bm{a}_1, \ldots, \bm{a}_m\) be defined as in the preceding section, and set
\begin{equation}
  \label{eq:matrix}
  A =
  \begin{pmatrix}
    \bm{a}_1 & \cdots &\bm{a}_n &\bm{a}_{n+1} & \cdots &\bm{a}_m
  \end{pmatrix}
  =
    \begin{pmatrix}
    d & 0 & \cdots & 0 & a_{1n+1} & \cdots & a_{1m}\\
    0 & d & \cdots & 0 & a_{2n+1} & \cdots & a_{2m}\\
    \vdots & \vdots & \ddots & \vdots & \vdots & & \vdots\\
    0 & 0 & \cdots & d & a_{nn+1} & \cdots & a_{nm}
  \end{pmatrix}.
\end{equation}

We relate GKZ systems given by the matrix \(A\) to the Gauss-Manin connection of the Fermat deformation.

\begin{proposition}\label{prop:regularity}
  \(M_A(\bm{\beta})\) is regular holonomic for every \(\bm{\beta} \in \C^n\).
\end{proposition}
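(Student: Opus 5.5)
The plan is to reduce the statement to Hotta's theorem: for a homogeneous matrix \(A\), i.e.\ one whose columns lie on a rational hyperplane not through the origin, the GKZ system \(M_A(\bm{\beta})\) is regular holonomic for every \(\bm{\beta}\). Schulze--Walther later showed that homogeneity is in fact equivalent to regularity for all parameters. So the whole argument consists of checking that the matrix \(A\) of \eqref{eq:matrix} is homogeneous in a suitable sense and then invoking that result.

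The homogeneity check is immediate. Each column \(\bm{a}_j\) has nonnegative entries with coordinate sum \(d\). This holds both for \(\bm{a}_j=d\bm{e}_j\) with \(j\le n\) and for \(j>n\) by hypothesis. Hence the linear form
\[
h(\bm{x})=\tfrac1d\sum_{i=1}^n x_i
\]
satisfies \(h(\bm{a}_j)=1\) for all \(j\). Consequently the Euler operator
\[
\tfrac1d\sum_i E_i=\sum_{j=1}^m\lambda_j\partial_j
\]
lies in the span of the \(E_i\), and the toric ideal \(I_A\) is homogeneous for the standard grading.

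One subtlety must be handled before citing anything. Definition~\ref{def:positive_homogenious} asks for an integral \(h:\Z^n\to\Z\), and the surrounding section assumes \(\Z A=\Z^n\). Neither holds here in general: \(h\) takes values in \(\frac1d\Z\) on \(\Z^n\), and \(\Z A\) may be a proper sublattice. I would resolve this in one of two ways.

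The first option is to cite Hotta's theorem in its original form. That form only requires the columns to lie on an affine hyperplane \(h=1\) with \(h\) a rational linear form. It also places no saturation or \(\Z A=\Z^n\) hypothesis, since the GKZ ideal \(H_A(\bm\beta)\) depends only on the \(\Q\)-linear relations among columns and on the toric ideal.

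The second option is to pass to a normalization \(\widetilde A=PA\), where \(P\in GL_n(\Q)\) maps \(\Z A\) isomorphically onto \(\Z^n\). Under this change:
\begin{itemize}
\item the toric ideal is unchanged, \(I_{\widetilde A}=I_A\), because \(A\bm u=A\bm v\iff \widetilde A\bm u=\widetilde A\bm v\);
\item the Euler operators change by the invertible matrix \(P\), so \(H_A(\bm\beta)=H_{\widetilde A}(P\bm\beta)\);
\item \(h\circ P^{-1}\) is a form that is integral on \(\Z^n=\Z\widetilde A\) and equals \(1\) on the columns, so \(\widetilde A\) is homogeneous in the sense of Definition~\ref{def:positive_homogenious}.
\end{itemize}
Then \(M_A(\bm\beta)=M_{\widetilde A}(P\bm\beta)\) as \(D_m\)-modules, and regularity for all \(\bm\beta\) follows from the theorem for \(\widetilde A\).

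The main obstacle is therefore only bookkeeping: making sure the cited regularity theorem is applied under hypotheses that are genuinely satisfied. If one wanted to avoid the citation entirely, the alternative would be to present \(M_A(\bm\beta)\) as a quotient or extension of Fourier transforms of the modules \(\phi_+\mathcal K\). Those Fourier transforms are regular because of the homogeneity (monodromic) property, via Brylinski's theorem on Fourier transforms of monodromic regular holonomic modules. That route is considerably longer, so I would rely on Hotta/Schulze--Walther.
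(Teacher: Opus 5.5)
Your proposal is correct and is essentially the paper's proof. The paper notes that the rows of \(A\) sum to \((d,\ldots,d)\), so \((1,\ldots,1)\) lies in the \(\Q\)-row span of \(A\); this is the dual form of your observation that \(h(\bm{x})=\frac1d\sum_i x_i\) equals \(1\) on every column. It then applies Hotta's theorem as stated in \cite[Theorem~2.4.9]{SST}, which is phrased via this row-span condition, so your first option is exactly what is used and the normalization through \(\widetilde A\), while correct, is not needed.
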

\begin{proof}
   The row vector \(\begin{pmatrix}1 & \cdots & 1 \end{pmatrix} \in \Q^m\) lies in the \(\Q\)-span of the rows of \(A\), since the sum of those rows is \(\begin{pmatrix}d & \cdots & d\end{pmatrix}\). Hence \(M_A(\bm{\beta})\) is regular holonomic for all \(\bm{\beta} \in \C^n\) by Hotta's theorem \cite[Theorem~2.4.9]{SST}.
 \end{proof}

\begin{proposition}\label{thm:well_definedness}
  For an element \(\overline{\bm{\gamma}} \in H_{n,d}/\Z A\), let \(\bm{\gamma} = (\gamma_1, \ldots, \gamma_n) \in \Z^n\) be a lift of \(\overline{\bm{\gamma}}\) such that \(\gamma_i >0\) for all \(i\).

  \begin{enumerate}[(1)]
  \item   The homomorphism of \(D_m\)-modules
\begin{align*}
D_m &\longrightarrow H^{n}_{\mathrm{Dw}}(Y/S)_{\overline{\bm{\gamma}}} \\ 1 &\longmapsto \omega_{\bm{\gamma}} := [X_1^{\gamma_1-1}\cdots X_n^{\gamma_n-1}dX_1\wedge \cdots \wedge dX_n]
\end{align*} induces a homomorphism
  \(    f_{\bm{\gamma}}: M_A(-\bm{\gamma}) \longrightarrow H^{n}_{\mathrm{Dw}}(Y/S)_{\overline{\bm{\gamma}}}
\)
   of \(D_m\)-modules.
  \item \(f_{\bm{\gamma}}\) is surjective.
  \end{enumerate}
\end{proposition}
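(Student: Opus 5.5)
For (1) we show that \(\omega_{\bm{\gamma}}\) is annihilated by the generators of \(H_A(-\bm{\gamma})\); the induced map on \(M_A(-\bm{\gamma})\) then exists. Two preliminary facts are needed.

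First, \(\omega_{\bm{\gamma}}\) is a genuine class of the complex \(\Omega^n(\mathbb{A}^n_S/S)^{(d)}\). The exponent has \(|\bm{\nu}|+n=\sum_i\gamma_i\equiv0 \bmod d\) because \(\bm{\gamma}\in H_{n,d}\), and all entries are nonnegative because \(\gamma_i>0\).

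Second, \(\omega_{\bm{\gamma}}\) lies in the \(\overline{\bm{\gamma}}\)-component. This is immediate from the displayed action: \(\zeta\) acts by \(\prod\zeta_i^{\gamma_i}\).

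\emph{Toric ideal.} By the formula \(\partial_j\bullet[X^{\bm{\nu}}dX]=[X^{\bm{\nu}+\bm{a}_j}dX]\), we get \(\partial^{\bm{u}}\omega_{\bm{\gamma}}=[X^{\bm{\gamma}-\1+A\bm{u}}dX]\). Hence \(\partial^{\bm{u}}-\partial^{\bm{v}}\) kills \(\omega_{\bm{\gamma}}\) whenever \(A\bm{u}=A\bm{v}\).

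\emph{Euler operators.} Here we use the coboundaries of the Dwork complex. For each \(i\), consider the \((n-1)\)-form
\[
\eta_i=(-1)^{i-1}X^{\bm{\gamma}-\1}X_i\,dX_1\wedge\cdots\wedge\widehat{dX_i}\wedge\cdots\wedge dX_n .
\]
It lies in the complex, since its degree count is \(|\bm{\gamma}|-n+1+(n-1)=|\bm{\gamma}|\equiv0\). Its image under \(d+dF_A\wedge\) is
\[
\Bigl(\gamma_i X^{\bm{\gamma}-\1}+X^{\bm{\gamma}-\1}X_i\partial_{X_i}F_A\Bigr)dX .
\]
Since \(X_i\partial_{X_i}F_A=\sum_j a_{ij}\lambda_jX^{\bm{a}_j}\), this gives the cohomology relation \(\gamma_i\omega_{\bm{\gamma}}+\sum_j a_{ij}\lambda_j\partial_j\omega_{\bm{\gamma}}=0\), i.e. \((E_i+\gamma_i)\omega_{\bm{\gamma}}=0\). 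The main care is the sign convention of the \(\D_S\)-action. One must confirm that \(\partial_j\) acts by multiplication by \(+X^{\bm{a}_j}\), which is the convention the paper states. With that convention the relation is exactly \(E_i-(-\gamma_i)\), matching the parameter \(-\bm{\gamma}\).

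For (2), first note that \(H^n_{\mathrm{Dw}}(Y/S)_{\overline{\bm{\gamma}}}\) is spanned over \(\mathcal O_S\) by the classes \([X^{\bm{\nu}}dX]\) with \(\bm{\nu}+\1\equiv\bm{\gamma} \bmod \Z A\). This holds because projection onto the isotypic component kills all other monomials. The claim is that every such class lies in \(D_m\omega_{\bm{\gamma}}\), and we prove it by reduction.

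\emph{Case \(\bm{\nu}+\1-\bm{\gamma}\in\Z_{\geq0}A\).} Then \([X^{\bm{\nu}}dX]=\partial^{\bm{u}}\omega_{\bm{\gamma}}\) for suitable \(\bm{u}\), and we are done.

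\emph{General case.} Write \(\bm{\nu}+\1=\bm{\gamma}+A\bm{w}\) with \(\bm{w}\in\Z^m\). Because the first \(n\) columns of \(A\) are \(d\bm{e}_i\), we may adjust \(\bm{w}\) so that only the first \(n\) coordinates can be negative. It therefore suffices to lower an exponent by \(d\) in one variable. Specifically, for \(\bm{\mu}\) with \(\mu_i\ge0\), we must express \([X^{\bm{\mu}}dX]\) through \([X^{\bm{\mu}+d\bm{e}_i}dX]=\partial_i[X^{\bm{\mu}}dX]\) and similar classes.

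This comes from the relation \((E_i+\mu_i+1)[X^{\bm{\mu}}dX]=0\), obtained by the same coboundary computation as above. It expresses \((\mu_i+1)[X^{\bm{\mu}}dX]\) as
\[
-d\lambda_i\partial_i[X^{\bm{\mu}}dX]-\sum_{j>n}a_{ij}\lambda_j\partial_j[X^{\bm{\mu}}dX].
\]
Since \(\mu_i+1\neq0\), we can divide, and \([X^{\bm{\mu}}dX]\) is written via classes with larger exponents. The obstacle is that this is an upward rewriting, whereas we need to reach \(\omega_{\bm{\gamma}}\) from above.

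So we instead argue by induction on the exponent, going downward from \(\bm{\nu}\). I expect this to be the delicate step: one must show that the exponents \(\bm{\nu}\) with \(\bm{\nu}+\1\in\bm{\gamma}+\Z_{\ge0}A\) already span the component. The plan is to induct on \(\sum_i\max(0,-w_i)\) and apply the relation \(\gamma'_i[\,\cdot\,]=-(E_i\text{-terms})\) to the intermediate class \(X^{\bm{\gamma}'-\1}\) with \(\gamma'_i\) between \(\gamma_i-d\) and \(\gamma_i\). Positivity of \(\gamma_i\) guarantees the coefficient \(\gamma'_i\) never vanishes along this path; this is why the lift is chosen with \(\gamma_i>0\).

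If this direct induction proves awkward, the fallback is to use the Jacobian-ring description of Dwork cohomology to reduce monomials modulo the relations \(X_i\partial_{X_i}F_A\).
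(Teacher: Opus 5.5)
Your part (1) is correct and is the paper's argument: $\partial^{\bm u}\omega_{\bm\gamma}=\omega_{\bm\gamma+A\bm u}$ kills the toric ideal, and the coboundary of $\eta_i$ gives $(E_i+\gamma_i)\omega_{\bm\gamma}=0$.

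Part (2) has a genuine gap. You never give a terminating argument that every class $[X^{\bm\nu}dX]$ in the component lies in $D_m\omega_{\bm\gamma}$.

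\textbf{Why the downward induction fails.} Induction on $\sum_i\max(0,-w_i)$ does not close. Apply the Euler relation for index $i$ to a class with $w_i<0$. Only the $j=i$ term lowers your measure. The terms $\lambda_j\,\omega_{\bm\nu+\bm a_j}$ with $j>n$ and $a_{ij}>0$ correspond to $\bm w+\bm e_j$, and their negative part is unchanged. Re-choosing $\bm w$ does not help. For instance, $\omega_{\bm\gamma-d\bm e_i+\bm a_j}$ with $0<a_{ij}<d$ is not of the form $\partial^{\bm c}\omega_{\bm\gamma}$: the vector $\bm a_j-d\bm e_i$ has a negative coordinate, while $\Z_{\geq0}A\subset\Z_{\geq0}^n$. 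Your sketch about an intermediate class with $\gamma_i'$ between $\gamma_i-d$ and $\gamma_i$ does not address this. The Jacobian-ring fallback is not an argument.

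\textbf{The missing idea.} The upward rewriting you discarded as the obstacle is exactly what works. What you lack is the fact that makes going upward sufficient (Lemma~\ref{lem:conductor} in the paper): there is $N$ such that every $\bm\nu\in\bm\gamma+\Z A$ with all $\nu_i\geq N$ lies in $\bm\gamma+\Z_{\geq0}A$. To prove it, write $\bm\nu-\bm\gamma=A\bm b$ and reduce each $b_j$ modulo $d$ to $r_j\in[0,d)$. The remainder $\bm\nu-\bm\gamma-\sum_j r_j\bm a_j$ lies in $d\Z^n=\sum_{i\leq n}\Z\bm a_i$. Since $\sum_j r_j\bm a_j$ is bounded, this remainder has nonnegative coordinates once all $\nu_i$ are large.

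\textbf{How to finish.} Iterate
\[
\omega_{\bm\beta}=-\beta_i^{-1}\sum_j a_{ij}\lambda_j\,\omega_{\bm\beta+\bm a_j}
\]
for $i=1,\dots,n$ in turn. Every term with nonzero coefficient has strictly larger $i$-th coordinate and no smaller coordinate. Division is always allowed, since $\beta_i\geq1$ for any class $[X^{\bm\beta-\1}dX]$. After finitely many steps, $\omega_{\bm\beta}$ is a polynomial-in-$\lambda$ combination of classes $\omega_{\bm\nu}$ with all $\nu_i\geq N$. By the lemma each of these is $\partial^{\bm c}\omega_{\bm\gamma}$ with $\bm c\in\Z_{\geq0}^m$, which gives surjectivity.

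Note also that $\gamma_i>0$ is needed only so that $\omega_{\bm\gamma}$ is an honest class. It is not needed to keep coefficients nonzero along a path.
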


\begin{proof}
  Let \(\partial^{\bm{u}} - \partial^{\bm{v}} \in I_A\). By the definition of the \(D_m\)-module structure of \(H^n_{\mathrm{Dw}}(Y/S)\),
  \[
    \partial_{j} \bullet \omega_{\bm{\gamma}} = \left[ \frac{\partial F_A}{\partial \lambda_j}\cdot X^{\bm{\gamma}-\1} dX_1 \wedge \cdots \wedge dX_n \right] = \omega_{\bm{\gamma}+\bm{a}_j}.
  \]
  Since \(A\bm{u} = A\bm{v}\), we have \((\partial^{\bm u} - \partial^{\bm v})\bullet\omega_{\bm{\gamma}} = \omega_{\bm{\gamma} + \sum u_j \bm{a}_j} - \omega_{\bm{\gamma} + \sum v_j \bm{a}_j} = 0\).

  We put \(\eta_i = X_i X^{\bm{\gamma}-\1}dX_1 \wedge \cdots \wedge \widehat{dX_i} \wedge \cdots \wedge dX_n \in \Omega^{n-1}(\mathbb{A}^n/S)^{(d)}\) for every \(i = 1, \ldots, n\). Then
  \begin{align*}
    (d + dF_A \wedge) \eta_i &= (-1)^{i+1}\left(\gamma_iX^{\bm{\gamma}-\1} + X_i\frac{\partial F_A}{\partial X_i}X^{\bm{\gamma}-\1}\right)dX_1 \wedge \cdots \wedge dX_n,
  \end{align*}
  hence
  \begin{equation}\label{eq:euler_relation}
    (E_i + \gamma_i) \bullet \omega_{\bm{\gamma}} = \left[\left(\gamma_iX^{\bm{\gamma}-\1} + X_i\frac{\partial F_A}{\partial X_i}X^{\bm{\gamma}-\1}\right)dX_1 \wedge \cdots \wedge dX_n\right] = 0.
  \end{equation}
  This proves (1).

  \(H^n_{\mathrm{Dw}}(Y/S)_{\overline{\bm{\gamma}}}\) is generated by elements of the form
  \[\omega_{\bm{\beta}} = \left[X^{\bm{\beta}-\1}dX_1 \wedge \cdots \wedge dX_n\right], \quad \bm{\beta} = (\beta_1, \ldots, \beta_n) \text{ is a lift of }\overline{\bm{\gamma}}.\]
For each \(i=1,\ldots,n\), (\ref{eq:euler_relation}) gives
\begin{equation}\label{eq:euler_elevation}
\omega_{\bm{\beta}}
=-\frac{1}{\beta_i}E_i\bullet\omega_{\bm{\beta}}
=-\frac{1}{\beta_i}\sum_{j=1}^m a_{ij}\lambda_j\omega_{\bm{\beta}+\bm{a}_j} =  -\frac{1}{\beta_i}\sum_{j=1}^m a_{ij}\lambda_j\left[X^{\bm{\beta}+\bm{a}_j-\1}dX_1 \wedge \cdots \wedge dX_n\right].
\end{equation}
Every term on the right-hand side has a strictly larger \(i\)-th coordinate
and no smaller coordinate. Iterating \eqref{eq:euler_elevation} for
\(i=1,\ldots,n\), we may therefore express \(\omega_{\bm{\beta}}\) as a linear combination of classes \(\omega_{\bm{\nu}} = \left[X^{\bm{\nu}-\1}dX_1 \wedge \cdots \wedge dX_n\right]\) for which every coordinate of \(\bm{\nu}\) is sufficiently large. By the following lemma, \(\bm{\nu}\) has the form \(\bm{\nu}=\bm{\gamma}+A\bm{c}\) for some \(\bm{c}\in\Z_{\geq 0}^m\). Consequently, \(\omega_{\bm{\nu}}=\partial^{\bm{c}}\bullet\omega_{\bm{\gamma}}\), which proves the surjectivity of \(f_{\bm{\gamma}}\).
\end{proof}

\begin{lemma} \label{lem:conductor}
For each \(\bm{\gamma} = (\gamma_1, \ldots, \gamma_n) \in \Z^n\), there exists an integer \(N > 0\) such that any \(\bm{\nu} = (\nu_1, \ldots, \nu_n) \in \bm{\gamma} + \Z A\) satisfying \(\nu_i \geq N\) for all \(i=1, \dots, n\) belongs to \(\bm{\gamma} + \Z_{\geq 0} A\).
\end{lemma}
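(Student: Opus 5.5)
The key observation is that the first \(n\) columns of \(A\) are \(d\bm{e}_1,\ldots,d\bm{e}_n\), so \(d\Z^n\subset\Z A\) and \(\Z_{\geq0}A\) contains \(d\Z_{\geq0}^n\). Hence \(\Z A\) is a finite union of cosets of \(d\Z^n\), and it suffices to show that each such coset contributes only finitely many obstructions. Concretely, I would choose, for each class \(\overline{\bm{\rho}}\in \Z A/d\Z^n\), a single representative \(\bm{\rho}\in\Z_{\geq0}A\). This is possible because every element of \(\Z A\) is \(\sum c_j\bm{a}_j\) with \(c_j\in\Z\), and adding suitable multiples of \(d\bm{a}_j\in d\Z^n\) (note \(d\bm{a}_j = \sum_i a_{ij}\,d\bm{e}_i\)) makes every coefficient nonnegative without changing the class mod \(d\Z^n\). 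More simply, replace \(c_j\) by \(c_j \bmod d\in\{0,\ldots,d-1\}\); since \(d\bm{a}_j\in d\Z^n\), the class is unchanged.

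Now let \(R=\max\) over these finitely many representatives \(\bm{\rho}\) and over coordinates \(i\) of \(\rho_i\), and set \(N=\max_i\gamma_i+R\). Take \(\bm{\nu}\in\bm{\gamma}+\Z A\) with \(\nu_i\geq N\) for all \(i\). Then \(\bm{\nu}-\bm{\gamma}\in\Z A\), so \(\bm{\nu}-\bm{\gamma}=\bm{\rho}+d\bm{k}\) for the chosen representative \(\bm{\rho}\) of its class and some \(\bm{k}\in\Z^n\). Each coordinate satisfies
\[
dk_i=\nu_i-\gamma_i-\rho_i\geq N-\gamma_i-R\geq 0,
\]
so \(\bm{k}\in\Z_{\geq0}^n\). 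Therefore
\[
\bm{\nu}=\bm{\gamma}+\bm{\rho}+\sum_i k_i\,\bm{a}_i\in\bm{\gamma}+\Z_{\geq0}A,
\]
as required.

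There is essentially no obstacle. The only point that needs care is the existence of nonnegative representatives of \(\Z A/d\Z^n\) inside \(\Z_{\geq0}A\), which is settled by reducing the coefficients \(c_j\) modulo \(d\). This step also shows that \(N\) depends only on \(\bm{\gamma}\) and \(A\).
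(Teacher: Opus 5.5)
Your proposal is correct and is essentially the paper's argument. The paper bounds by the largest coordinate of all \(\sum_j r_j\bm{a}_j\) with \(0\le r_j<d\) instead of fixing one representative per class in \(\Z A/d\Z^n\), which is the same idea. One small fix is needed: since \(\max_i\gamma_i\) may be negative, your \(N=\max_i\gamma_i+R\) need not satisfy \(N>0\), so take \(N=\max\{1,\max_i\gamma_i+R\}\) as the paper does; the rest of your argument is unchanged.
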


\begin{proof}
  Let \(N_0\) be the maximum coordinate among all vectors of the form
  \[\sum_{j=1}^m r_j \bm{a}_j \quad (r_j\in\Z,\ 0 \leq r_j < d),\]
  and put \(N = \max\{1,\ N_0 + \max\{\gamma_1, \ldots,\gamma_n\}\}\).
  Given \(\bm{\nu} = (\nu_1, \ldots, \nu_n) \in\bm{\gamma}+\Z A\), choose \(\bm{b}=(b_1, \ldots, b_m)\in\Z^m\) such that
  \(\bm{\nu}-\bm{\gamma}=A\bm{b}\).
  For each \(j\), let \(r_j\) be the residue of \(b_j\) modulo \(d\),
  with \(0\leq r_j<d\). Then
  \[
    \bm{\nu} - \bm{\gamma} - \sum_{j=1}^m r_j\bm{a}_j = \sum_{j=1}^m(b_j-r_j)\bm{a}_j\in d\Z^n.
  \]
  Hence there exist integers \(q_1,\ldots, q_n\) such that
  \[
    \bm{\nu} - \bm{\gamma} = \sum_{j=1}^m r_j\bm{a}_j + \sum_{i=1}^n q_i d\bm{e}_i.
  \]
  If \(\nu_i\geq N\) for all \(i\), we have
  \[
    dq_i = \nu_i - \gamma_i - \sum_{j=1}^mr_ja_{ij} \geq N - \max\{\gamma_1,\ldots,\gamma_n\} - N_0 \geq 0
  \]
   for each \(i=1,\ldots,n\). Thus \(q_i\geq0\) for all \(i\). Since \(d\bm{e}_i=\bm{a}_i\),
  \[
    \bm{\nu} - \bm{\gamma} = \sum_{j=1}^m r_j\bm{a}_j+\sum_{i=1}^n q_i\bm{a}_i \in \Z_{\geq 0}A.
  \]
  Therefore \(\bm{\nu} \in \bm{\gamma} + \Z_{\geq 0}A\), as required.
\end{proof}

Since \(\Z A \neq \Z^n\), we normalize \(A\) in order to apply the results from the latter part of the previous section. Let \(PAQ = \begin{pmatrix} D & O \end{pmatrix}\) be the Smith normal form of \(A\), where \(P\in \mathrm{GL}_n(\Z), Q \in \mathrm{GL}_m(\Z)\), and
\[
  \begin{pmatrix}D & O\end{pmatrix}=
  \begin{pmatrix}
    \alpha_1 & 0 & \cdots & 0 & 0 & \cdots & 0\\
    0 & \alpha_2 & \cdots & 0 & 0 & \cdots & 0\\
    \vdots & \vdots & \ddots & \vdots & \vdots & & \vdots\\
    0 & 0 & \cdots & \alpha_n & 0 & \cdots & 0
  \end{pmatrix} \quad (\alpha_1,\ldots, \alpha_n \in \Z \setminus \{0\}).
\]
We define
\begin{equation}\label{nomalization}
T = D^{-1}P \in \mathrm{GL}_n(\Q) \text{ and } \widetilde{A} = TA.
\end{equation}
Then \(\widetilde{A}\) is an integer matrix and \(\Z\widetilde{A} = \Z^n\). Clearly \(A\) and \(\widetilde{A}\) define the same toric ideal, \(M_A(\bm{\beta})\) coincides with \(M_{\widetilde{A}}(T\bm{\beta})\) for any \(\bm{\beta} \in \C^n\).

\begin{proposition}
  \(\widetilde{A}\) is positive and homogeneous.
\end{proposition}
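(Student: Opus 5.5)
Both properties will be obtained by transporting linear functionals along the invertible rational matrix \(T\). Homogeneity is the key step, and positivity then follows from it.

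For homogeneity, I would use the row vector \(\1=(1,\ldots,1)\in\Z^n\). Every column \(\bm{a}_j\) of \(A\) satisfies \(\1\bm{a}_j=d\), since both \(d\bm{e}_i\) and \(\bm{a}_j\) (for \(j>n\)) have coordinate sum \(d\). Because \(\widetilde{\bm{a}}_j=T\bm{a}_j\), the rational linear form \(\ell(\bm{x}):=\frac1d\1T^{-1}\bm{x}\) on \(\Q^n\) satisfies \(\ell(\widetilde{\bm{a}}_j)=1\) for all \(j\).

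It remains to see that \(\ell\) restricts to a group homomorphism \(\Z^n\to\Z\). Here I would use \(\Z\widetilde{A}=\Z^n\). Every \(\bm{x}\in\Z^n\) can be written as \(\bm{x}=\sum_j c_j\widetilde{\bm{a}}_j\) with \(c_j\in\Z\), so \(\ell(\bm{x})=\sum_j c_j\in\Z\). Thus \(h:=\ell|_{\Z^n}\) is a homomorphism with \(h(\widetilde{\bm{a}}_j)=1\), and \(\widetilde{A}\) is homogeneous.

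Positivity is then immediate. Suppose \(\bm{x}\in\Z_{\geq0}\widetilde{A}\cap(-\Z_{\geq0}\widetilde{A})\). Applying \(h\), we get \(h(\bm{x})\geq0\) and \(h(\bm{x})\leq0\), so \(h(\bm{x})=0\). Writing \(\bm{x}=\sum_j c_j\widetilde{\bm{a}}_j\) with \(c_j\geq0\), we have \(h(\bm{x})=\sum_j c_j=0\), which forces all \(c_j=0\) and hence \(\bm{x}=0\).

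Along the way I need two routine facts. First, \(\widetilde{A}\) is an integer matrix with \(\Z\widetilde{A}=\Z^n\), which the paper asserts after \eqref{nomalization}. To check it: \(TA=D^{-1}PA=D^{-1}(D\mid O)Q^{-1}=(I_n\mid O)Q^{-1}\), which is integral, and its columns span \(\Z^n\) because \(Q^{-1}\in\mathrm{GL}_m(\Z)\). Second, \(T\) is invertible; this holds because the \(\alpha_i\) are nonzero.

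The only point needing care is the integrality of \(h\), since \(\ell\) is a priori only rational. The spanning property \(\Z\widetilde{A}=\Z^n\) settles it, so I expect no real obstacle.
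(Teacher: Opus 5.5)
Your proof is correct, and the homogeneity half is the paper's own argument: the same functional \(h(\bm{x})=\frac1d(1,\ldots,1)T^{-1}\bm{x}\), which is integral because \(\Z\widetilde{A}=\Z^n\).

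For positivity you take a different route. You deduce positivity from homogeneity: a linear form equal to \(1\) on every generator forces the semigroup to meet its negative only in \(0\). The paper instead transports positivity of \(\Z_{\geq0}A\) along the injective linear map \(T\), using \(T(G)\cap T(G')=T(G\cap G')\). It relies tacitly on \(\Z_{\geq0}A\subset\Z_{\geq0}^n\), which meets its negative only in \(0\).

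The paper's version keeps the two properties logically independent. Yours makes the implication homogeneous \(\Rightarrow\) positive explicit, so positivity needs no separate input about the signs of the entries of \(A\). You also verify, via \(TA=(I_n\mid O)Q^{-1}\), the integrality of \(\widetilde{A}\) and the spanning property \(\Z\widetilde{A}=\Z^n\), which the paper only asserts.
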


\begin{proof}
For a semigroup \(G\) in \(\Z^n\), let \(TG\) be the image of \(G\) by \(T\). Then
\begin{align*}
  \Z_{\geq 0}\widetilde A\cap(-\Z_{\geq 0}\widetilde A)
  &= T(\Z_{\geq 0} A)\cap T(-\Z_{\geq 0} A)\\
  &= T\left(\Z_{\geq 0} A\cap(-\Z_{\geq 0} A)\right)\\
  &= \{0\}.
\end{align*}

  Write \(\widetilde{A} = \begin{pmatrix}\widetilde{\bm{a}}_1 & \cdots & \widetilde{\bm{a}}_m\end{pmatrix}\). Since \(\Z \widetilde{A} = \Z^n\), every \(\bm{x} \in \Z^n\) can be written as a \(\Z\)-linear combination of the vectors \(\widetilde{\bm{a}}_i\). Define a group homomorphism by
\[
  h:\Z^n \longrightarrow \Q; \bm{x} \longmapsto \frac{1}{d}\begin{pmatrix}1 & \cdots & 1\end{pmatrix}T^{-1}\bm{x}.
\]
By the definition of \(A\), \(\begin{pmatrix}1 & \cdots & 1\end{pmatrix}\bm{a}_i = d\) for all \(i=1, \ldots, m\). Therefore, \(h\) takes integral values and \(h(\widetilde{\bm{a}}_i) = 1\) for all \(i = 1, \ldots, m\).
\end{proof}

\begin{lemma}\label{lemma:non-resonance}
  For any \(\bm{\beta} \in \Q^n\), \(T(\bm{\beta} + kd\1) \notin \mathrm{sRes}(\widetilde{A})\) and \(T(\bm{\beta} - kd\1) \notin \DD\mathrm{sRes}(\widetilde{A})\) for sufficiently large integer \(k \gg 0\).
\end{lemma}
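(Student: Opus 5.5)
Both claims follow from Proposition~\ref{thm:sRes} and Proposition~\ref{thm:DsRes}, applied to \(\widetilde{A}\). Since \(\Z\widetilde{A}=\Z^n\) and \(\widetilde{A}\) is positive and homogeneous, both results apply to \(\widetilde{A}\). The key observation is that \(T(d\1)\) lies in the interior of the cone \(\R_{\geq 0}\widetilde{A}\). Indeed, \(d\1=\sum_{i=1}^n \bm{a}_i\), so
\[
  T(d\1)=\sum_{i=1}^n\widetilde{\bm{a}}_i .
\]
The vectors \(\bm{a}_1,\ldots,\bm{a}_n\) equal \(d\bm{e}_1,\ldots,d\bm{e}_n\), so they span \(\R^n\), and hence \(\widetilde{\bm{a}}_1,\ldots,\widetilde{\bm{a}}_n\) also span \(\R^n\). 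A strictly positive combination of a spanning subset of the generators lies in the interior of the cone. So \(\bm{w}:=T(d\1)\in(\R_{\geq0}\widetilde{A})^\circ\).

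For the first claim, fix \(\bm{\beta}\in\Q^n\) and let \(\bm{\delta}=\bm{\delta}_{\widetilde{A}}\) be as in Proposition~\ref{thm:sRes}. We need \(T\bm{\beta}+k\bm{w}-\bm{\delta}\in\R_{\geq0}\widetilde{A}\) for \(k\gg0\). Write this vector as \(k\bigl(\bm{w}+k^{-1}(T\bm{\beta}-\bm{\delta})\bigr)\). Since \(\bm{w}\) is an interior point, some ball around \(\bm{w}\) lies in the cone. For \(k\) large, \(\bm{w}+k^{-1}(T\bm{\beta}-\bm{\delta})\) lies in that ball, and the cone is closed under positive scaling. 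Therefore \(T(\bm{\beta}+kd\1)\in\bm{\delta}+\R_{\geq0}\widetilde{A}\), which is disjoint from \(\mathrm{sRes}(\widetilde{A})\).

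For the second claim, the same argument shows \(-T(\bm{\beta}-kd\1)=k\bm{w}-T\bm{\beta}\in(\R_{\geq0}\widetilde{A})^\circ\) for \(k\gg0\). The interior is an open cone, so a small perturbation of \(k\bm{w}\), rescaled, stays inside. Hence \(T(\bm{\beta}-kd\1)\in-(\R_{\geq0}\widetilde{A})^\circ\). Proposition~\ref{thm:DsRes} then gives \(T(\bm{\beta}-kd\1)\notin\DD\mathrm{sRes}(\widetilde{A})\).

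The only point needing care is the interior-point claim for \(\bm{w}\), and its argument is given above: \(T\) is an invertible linear map, so it carries the full-dimensional simplicial cone generated by the \(\bm{a}_i\), \(i\le n\), onto a full-dimensional subcone of \(\R_{\geq0}\widetilde{A}\) with \(\bm{w}\) in its interior. Rationality of \(\bm{\beta}\) plays no role in this step. It matters only later, because Proposition~\ref{thm:GKZ=FT!} is stated for rational parameters.
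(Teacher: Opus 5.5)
Your proof is correct and follows essentially the same route as the paper: both shift the parameter deep into the cone by adding a large multiple of \(d\1\), then invoke Propositions~\ref{thm:sRes} and~\ref{thm:DsRes}. The only difference is cosmetic. The paper argues in the coordinates of \(A\), where \(\R_{\geq 0}A=\R_{\geq 0}^n\) makes the interior claim immediate, and then transports by \(T\); you work directly in the coordinates of \(\widetilde{A}\), using that \(T(d\1)=\sum_{i\le n}\widetilde{\bm{a}}_i\) is an interior point of \(\R_{\geq 0}\widetilde{A}\).
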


\begin{proof}
  By Proposition~\ref{thm:sRes}, there exists \(\bm{\delta}_{\widetilde{A}} \in \Z_{\geq 0}\widetilde{A}\) such that \((\R_{\geq 0}\widetilde{A} + \bm{\delta}_{\widetilde{A}}) \cap \mathrm{sRes}(\widetilde{A}) = \emptyset\). Since \(\R_{\geq 0}A = \R_{\geq 0}^n\), we have \(\bm{\beta} + kd\1 - T^{-1}\bm{\delta}_{\widetilde{A}}\in \R_{\geq 0}A\) for sufficiently large \(k\). Then \(T(\bm{\beta} + kd\1) \in \bm{\delta}_{\widetilde{A}} + \R_{\geq 0}\widetilde{A}\), and thus \(T(\bm{\beta} + kd\1) \notin \mathrm{sRes}(\widetilde{A})\).

  Similarly, \(\bm{\beta} - kd\1 \in -(\R_{\geq 0}A)^\circ\) for sufficiently large \(k\). \(T\) maps \(-(\R_{\geq 0}A)^\circ\) to \(-(\R_{\geq 0}\widetilde{A})^\circ\). Hence \(T(\bm{\beta} - kd\1) \notin \DD\mathrm{sRes}(\widetilde{A})\) by Proposition~\ref{thm:DsRes}.
\end{proof}

\begin{proposition}\label{prop:contiguity-image}
  For any \(\bm{\beta} \in \Q^n\),
  \[
    \FT(\phi^{\widetilde{A}}_{!+}\mathcal{K}_{T\bm{\beta}-\1}) \cong \mathrm{Im}(M_A(-\bm{\beta} - kd\1) \xlongrightarrow{\cdot (\partial_1\cdots \partial_n)^{2k}} M_A(-\bm{\beta} + kd\1))
  \]
  for sufficiently large integer \(k\).
\end{proposition}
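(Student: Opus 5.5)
Throughout I write \(\bm{\beta}_\pm := -\bm{\beta} \pm kd\1\), so that the two GKZ systems in the statement are \(M_A(\bm{\beta}_-)\) and \(M_A(\bm{\beta}_+)\). The plan is to identify both systems with Fourier transforms of direct images of one Kummer module. Then I show that, under these identifications, right multiplication by \((\partial_1\cdots\partial_n)^{2k}\) is the Fourier transform of a nonzero multiple of the canonical morphism \(\phi^{\widetilde A}_!\to\phi^{\widetilde A}_+\). Since the minimal extension is by definition the image of that morphism, and \(\FT\) is an exact equivalence, the claim follows.

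\textbf{Step 1: identifying the two GKZ systems.} Recall \(M_A(\bm{\beta}')=M_{\widetilde A}(T\bm{\beta}')\). Apply Lemma~\ref{lemma:non-resonance} to the rational vector \(-\bm{\beta}\). For \(k\gg0\) it gives \(T\bm{\beta}_+\notin\mathrm{sRes}(\widetilde A)\) and \(T\bm{\beta}_-\notin\DD\mathrm{sRes}(\widetilde A)\). Propositions~\ref{thm:GKZ=FT+} and \ref{thm:GKZ=FT!} then yield
\[
M_A(\bm{\beta}_+)\cong \FT(\phi^{\widetilde A}_+\mathcal K_{-T\bm{\beta}_+-\1}),\qquad M_A(\bm{\beta}_-)\cong\FT(\phi^{\widetilde A}_!\mathcal K_{-T\bm{\beta}_--\1}).
\]
The two Kummer parameters are \(T\bm{\beta}-kdT\1-\1\) and \(T\bm{\beta}+kdT\1-\1\). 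They differ from \(T\bm{\beta}-\1\) by the integer vectors \(\mp kdT\1\); integrality holds because \(d\1=A\1_{[1..n]}\), so \(dT\1\in\Z\widetilde A=\Z^n\). Multiplication by the monomial \(t^{\bm{u}}\) gives an isomorphism \(\mathcal K_{\bm{\gamma}}\cong\mathcal K_{\bm{\gamma}+\bm{u}}\) for \(\bm u\in\Z^n\). Hence both direct images are of the same module \(\mathcal K:=\mathcal K_{T\bm{\beta}-\1}\), and we obtain \(M_A(\bm{\beta}_-)\cong\FT\phi_!\mathcal K\) and \(M_A(\bm{\beta}_+)\cong\FT\phi_+\mathcal K\).

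\textbf{Step 2: comparing the two maps (the main obstacle).} I have to show that, under the isomorphisms of Step 1, the map \(\cdot(\partial_1\cdots\partial_n)^{2k}\) agrees up to a nonzero scalar with \(\FT\) of the canonical morphism \(c:\phi_!\mathcal K\to\phi_+\mathcal K\). First I would check that \(\cdot(\partial_1\cdots\partial_n)^{2k}\) is well defined: \(\partial_j\) commutes with \(I_A\), and \(E_i\partial_j=\partial_j(E_i-a_{ij})\). The total shift is \(2k\sum_{j\le n}\bm a_j=2kd\1\), which is exactly \(\bm{\beta}_+-\bm{\beta}_-\).

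For the comparison itself the key fact is \(\mathrm{Hom}(\phi_!\mathcal K,\phi_+\mathcal K)\cong\mathrm{Hom}(\mathcal K,\phi^!\phi_+\mathcal K)\). Since \(\phi^{\widetilde A}\) is a locally closed embedding (as \(\Z\widetilde A=\Z^n\)), \(\phi^!\phi_+\mathcal K=\mathcal K\), and so this Hom space is \(\mathrm{End}(\mathcal K)=\C\). Consequently every morphism \(\phi_!\mathcal K\to\phi_+\mathcal K\) is a scalar multiple of \(c\).

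It remains to see that our map is nonzero. For this I would restrict to the torus orbit, i.e.\ the open set \(\lambda_1\cdots\lambda_m\neq0\) after Fourier transform. More concretely, I would use the explicit Schulze--Walther description of \(\FT\phi_+\mathcal K\) as the pushforward of \(\C[t^{\pm1}]t^{\bm\gamma}\cdot e^{\sum\lambda_jt^{\widetilde{\bm a}_j}}\). In that description \(1\in M_{\widetilde A}\) corresponds to \(t^{\bm\gamma}\)-type generators, and \(\partial_j\) acts as multiplication by \(t^{\widetilde{\bm a}_j}\), which is invertible on the torus. So \((\partial_1\cdots\partial_n)^{2k}\) sends the generator to the nonzero element \(t^{2kdT\1}\cdot(\text{generator})\). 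Hence the map is nonzero, and therefore equals a nonzero multiple of \(\FT(c)\).

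\textbf{Step 3: conclusion.} Since \(\FT\) is exact, \(\FT(\phi_{!+}\mathcal K)=\FT(\mathrm{Im}\,c)=\mathrm{Im}\,\FT(c)\), which is the image of the contiguity operator \(\cdot(\partial_1\cdots\partial_n)^{2k}\). The place I expect difficulties is the compatibility of the Schulze--Walther and Reichelt isomorphisms with the generator \(1\) and with the shifts by \(t^{\bm u}\). If this is subtle, the Hom-dimension argument reduces it to a single nonvanishing check, which I would carry out on the generic stalk.
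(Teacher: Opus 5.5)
Your proof is correct, and its overall structure matches the paper's: Step 1 is the same identification $M_A(-\bm{\beta}-kd\1)\cong\FT(\phi^{\widetilde A}_!\mathcal K_{T\bm\beta-\1})$ and $M_A(-\bm{\beta}+kd\1)\cong\FT(\phi^{\widetilde A}_+\mathcal K_{T\bm\beta-\1})$, via Lemma~\ref{lemma:non-resonance} and the integrality of $T(d\1)=\widetilde{\bm a}_1+\cdots+\widetilde{\bm a}_n$. Step 3 is the same use of exactness of $\FT$.

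The difference is in Step 2. The paper simply cites \cite[Proposition~1.15]{reichelt} for the fact that the canonical morphism becomes $\cdot(\partial_1\cdots\partial_n)^{2k}$ up to a nonzero scalar. You prove this instead. First, $\mathrm{Hom}(\phi^{\widetilde A}_!\mathcal K,\phi^{\widetilde A}_+\mathcal K)\cong\mathrm{End}(\mathcal K)=\C$, so every morphism is a multiple of the canonical one. It then suffices to check that the contiguity map is nonzero.

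What your route buys is independence from the normalizations in Reichelt's statements. The paper itself flags a sign error in the Kummer parameter there (see the Remark after Proposition~\ref{thm:DsRes}). You never need to know how the Schulze--Walther and Reichelt isomorphisms act on the generator $1$. The paper's route is shorter but inherits whatever conventions \cite{reichelt} uses.

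Two presentation points.

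\emph{Notation for the adjunction.} In the paper's notation, $\phi^!=\DD\circ\phi^+\circ\DD$ is the left adjoint of $\phi_+$, not the right adjoint of $\phi_!$. It is cleaner to write the computation with $\phi^{\widetilde A}=j\circ\iota$ as
\[
\mathrm{Hom}(j_!\iota_+\mathcal K,\,j_+\iota_+\mathcal K)\cong\mathrm{Hom}(\iota_+\mathcal K,\,\iota_+\mathcal K)\cong\mathrm{End}(\mathcal K),
\]
using $j_!\dashv j^+$ and Kashiwara's equivalence.

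\emph{Nonvanishing.} This needs no integral representation or generic-stalk argument. Under $M_A(-\bm{\beta}+kd\1)\cong\FT(\phi^{\widetilde A}_+\mathcal K)$, the operator $\partial_j$ corresponds to multiplication by $\lambda_j$ on $\phi^{\widetilde A}_+\mathcal K=j_+\iota_+\mathcal K$, which is bijective. Since $[1]\neq 0$ in the nonzero cyclic module $M_A(-\bm{\beta}+kd\1)$, it follows that $(\partial_1\cdots\partial_n)^{2k}\cdot[1]\neq0$.
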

\begin{proof}
  Since \(\bm{a}_i=d\bm{e}_i\) for \(i=1,\ldots,n\), the vector
  \(T(d\1)=\widetilde{\bm{a}}_1+\cdots+\widetilde{\bm{a}}_n\)
  is integral. Thus the Kummer modules with parameters
  \(T\bm{\beta}\pm kT(d\1)-\1\)
  are isomorphic to \(\mathcal K_{T\bm{\beta}-\1}\).
  For all sufficiently large \(k\),
  \[
    M_{\widetilde{A}}(T(-\bm{\beta} - kd\1)) \cong \FT(\phi^{\widetilde{A}}_!\mathcal{K}_{T\bm{\beta}-\1}) \longrightarrow \FT(\phi^{\widetilde{A}}_+\mathcal{K}_{T\bm{\beta}-\1}) \cong M_{\widetilde{A}}(T(-\bm{\beta} + kd\1))
  \]
  by Propositions \ref{thm:GKZ=FT+} and \ref{thm:GKZ=FT!} and the preceding lemma. Moreover,
  \[
    T(-\bm{\beta} + kd\1) - T(-\bm{\beta} - kd\1) = T(2kd\1) = 2k(\widetilde{\bm{a}}_1+ \cdots +\widetilde{\bm{a}}_n).
  \]
  Hence the homomorphism \(M_{\widetilde{A}}(T(-\bm{\beta} - kd\1)) \rightarrow M_{\widetilde{A}}(T(-\bm{\beta} + kd\1))\) coincides, up to a nonzero scalar, with the contiguity operator \(\cdot (\partial_1\cdots\partial_n)^{2k}\) \cite[Proposition~1.15]{reichelt}.
\end{proof}

\begin{proposition}\label{isomorphism_on_Gm}
  If \(H^{n}_{\mathrm{Dw}}(Y/S)_{\overline{\bm{\gamma}}} \neq 0\), then
  \[
    \FT(f_{\bm{\gamma}}) : \FT(M_A(-\bm{\gamma})) \rightarrow \FT(H^{n}_{\mathrm{Dw}}(Y/S)_{\overline{\bm{\gamma}}})
  \]
  is an isomorphism on \(\mathbb{G}_m^m \subset \mathbb{A}^m_\C\).
\end{proposition}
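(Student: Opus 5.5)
Since \(f_{\bm{\gamma}}\) is surjective by Proposition~\ref{thm:well_definedness}(2) and \(\FT\) is exact, \(\FT(f_{\bm{\gamma}})\) is surjective on all of \(\mathbb{A}^m_\C\). It therefore suffices to show that the kernel of \(\FT(f_{\bm{\gamma}})\) vanishes on the torus \(\mathbb{G}_m^m\). The plan is to identify \(\FT(M_A(-\bm{\gamma}))\) with a direct image of a Kummer module along the monomial map. On the torus \(\mathbb{G}_m^m\), \(\FT(M_A(-\bm{\gamma}))\) should become a simple module, or at least a module without nonzero quotients supported on the torus and without nonzero submodules whose quotients vanish there. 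Surjectivity onto a nonzero target then forces an isomorphism.

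Concretely, I would proceed as follows. Write \(M_A(-\bm{\gamma}) = M_{\widetilde{A}}(-T\bm{\gamma})\). Because \(\gamma_i>0\) for all \(i\), the parameter \(-T\bm{\gamma}\) lies in \(-(\R_{\geq 0}\widetilde{A})^\circ\). By Propositions~\ref{thm:DsRes} and \ref{thm:GKZ=FT!}, this gives
\[
M_A(-\bm{\gamma}) \cong \FT(\phi^{\widetilde{A}}_!\mathcal{K}_{T\bm{\gamma}-\1}).
\]
Hence \(\FT(M_A(-\bm{\gamma}))\) is, up to the sign involution, isomorphic to \(\phi^{\widetilde{A}}_!\mathcal{K}_{T\bm{\gamma}-\1}\). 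The map \(\phi^{\widetilde{A}}:\mathbb{G}_m^n\to\mathbb{A}^m_\C\) factors through \(\mathbb{G}_m^m\). Since \(\Z\widetilde{A}=\Z^n\), the induced map \(\mathbb{G}_m^n\to\mathbb{G}_m^m\) is a closed immersion, namely a subtorus embedding. Restricting to the open torus therefore gives
\[
\phi^{\widetilde{A}}_!\mathcal{K}_{T\bm{\gamma}-\1}\big|_{\mathbb{G}_m^m}\cong \iota_+\mathcal{K}_{T\bm{\gamma}-\1}
\]
for the closed immersion \(\iota:\mathbb{G}_m^n\hookrightarrow\mathbb{G}_m^m\). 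Here I use that \(!\)- and \(+\)-pushforward agree for closed immersions and commute with open restriction. The Kummer module is an irreducible rank-one integrable connection, and Kashiwara's equivalence preserves simplicity, so the restriction to \(\mathbb{G}_m^m\) is a simple \(\D_{\mathbb{G}_m^m}\)-module.

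Next, I restrict \(\FT(f_{\bm{\gamma}})\) to \(\mathbb{G}_m^m\). This gives a surjection from a simple module onto \(\FT(H^n_{\mathrm{Dw}}(Y/S)_{\overline{\bm{\gamma}}})|_{\mathbb{G}_m^m}\), so the map is either zero or an isomorphism. It remains to show that the target is nonzero on the torus. This is the main obstacle: the hypothesis \(H^n_{\mathrm{Dw}}(Y/S)_{\overline{\bm{\gamma}}}\neq0\) does not obviously prevent its Fourier transform from being supported on the coordinate hyperplanes.

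I would first argue as follows. Since the image is a quotient of \(\phi^{\widetilde A}_!\mathcal K\), every composition factor is either the simple middle extension or a module supported on the complement of the torus. Suppose the image on the torus were zero. Then \(\FT(H^n_{\mathrm{Dw}})_{\overline{\bm{\gamma}}}\) would be supported on \(\bigcup\{\lambda_j=0\}\), so \(H^n_{\mathrm{Dw}}(Y/S)_{\overline{\bm{\gamma}}}\) would be annihilated by a power of some \(\partial_j\). On the other hand, the explicit action \(\partial_j\bullet\omega_{\bm{\nu}}=\omega_{\bm{\nu}+\bm{a}_j}\) shows that \(\partial_j^k\omega_{\bm{\gamma}}=\omega_{\bm{\gamma}+k\bm{a}_j}\). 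I will check that this is nonzero for all \(k\) by restricting to a Fermat fibre \(\lambda=(1,\dots,1,0,\dots,0)\). There the Dwork cohomology is the Jacobian ring twisted by the character, and the class \(X^{\bm{\nu}-\1}dX\) is nonzero precisely when every \(\nu_i\not\equiv0\bmod d\). This is compatible with the rank formula from the set \(\mathfrak I_{\bm\gamma}\). The nonvanishing of the component produces such a nonzero class, and its translates by \(\partial^{\bm c}\) stay nonzero on a generic fibre. I would make this fibrewise argument rigorous via the \(\mathcal O_U\)-local freeness of the Gauss--Manin side on \(U\), which meets the torus. This yields the contradiction, and hence the isomorphism.
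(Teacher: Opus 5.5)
Your reduction is correct and matches the paper's in substance. On $\mathbb{G}_m^m$ the source is $\iota_+\mathcal{K}_{T\bm{\gamma}-\1}$, which is simple, so the surjection $\FT(f_{\bm\gamma})|_{\mathbb{G}_m^m}$ is an isomorphism once its target is nonzero. The paper gets the torus restriction from \cite[Proposition~2.1]{SW09}, which holds for every parameter, together with the support bound from $I_A$ and Kashiwara's equivalence. Your route via $M_A(-\bm\gamma)\cong\FT(\phi^{\widetilde A}_!\mathcal{K}_{T\bm\gamma-\1})$, using $\gamma_i>0$, works equally well.

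The paper only asserts that this map is nonzero, so you are right to isolate that point. But your argument for it has a genuine gap. Write $N=H^n_{\mathrm{Dw}}(Y/S)_{\overline{\bm\gamma}}$.
(i) $\FT(N)|_{\mathbb{G}_m^m}=0$ means each element, in particular $\omega_{\bm\gamma}$, is killed by a power of the \emph{product} $\partial_1\cdots\partial_m$. It does not give a power of a single $\partial_j$.
(ii) The Fermat fibre cannot certify $\omega_{\bm\gamma+k\bm a_j}\neq 0$ for all $k$. For $j>n$, some coordinate of $\bm\gamma+k\bm a_j$ becomes divisible by $d$ for suitable $k$ (e.g.\ if some $a_{ij}$ is prime to $d$), and then the class vanishes on that fibre. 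Your claim that the translates stay nonzero on a generic fibre is exactly what must be proved, and $\mathcal O_U$-local freeness does not give it, since a connection can kill sections.
(iii) Any fibrewise argument over $U$ presupposes $\mathfrak I_{\bm\gamma}\neq\emptyset$, and the hypothesis $N\neq0$ does not give this. For $A=2I_2$ and $\bm\gamma=(2,2)$, the Dwork complex splits into one-variable factors and $N$ is the $\delta$-module at the origin. So $N\neq 0$ but $N|_U=0$, while the proposition still holds there, with $\FT(N)\cong\mathcal O_{\mathbb{A}^2_\C}$.

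The missing step is short. Since $\sum_{j=1}^m\bm a_j\in\Z A$, the vector $\bm\gamma+k\sum_{j}\bm a_j$ is another lift of $\overline{\bm\gamma}$ with positive entries. Apply Proposition~\ref{thm:well_definedness}(2) to that lift: the class $(\partial_1\cdots\partial_m)^k\omega_{\bm\gamma}=\omega_{\bm\gamma+k\sum_j\bm a_j}$ generates $N$, so it is nonzero for every $k$ whenever $N\neq 0$. Hence the generator of $\FT(N)$ is not $\lambda_1\cdots\lambda_m$-torsion, and $\FT(N)|_{\mathbb{G}_m^m}\neq 0$. Combined with your simplicity argument, this proves the proposition with no fibrewise input.
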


\begin{proof}
 We use the same notation \(I_A\) for the toric ideal in \(\C[\lambda_1,\ldots,\lambda_m]\) obtained by replacing each \(\partial_i\) with \(\lambda_i\).
 Let \(O_{\widetilde A}:=\phi^{\widetilde A}(\mathbb{G}_m^n)\). The Zariski closure \(\overline{O_{\widetilde{A}}}\) of \(O_{\widetilde{A}}\) in \(\mathbb{A}^m_\C\) is defined by the toric ideal \(I_{\widetilde{A}} = I_A\). The generator \(1\) of \(\FT(M_A(-\bm{\gamma}))\) is annihilated by \(I_A\). Thus its support is contained in \(\overline{O_{\widetilde{A}}}\). By the surjectivity of \(\FT(f_{\bm{\gamma}})\), the support of \(\FT(H^{n}_{\mathrm{Dw}}(Y/S)_{\overline{\bm{\gamma}}})\) is also contained in \(\overline{O_{\widetilde{A}}}\).

  Let \(\iota:\mathbb{G}_m^n \cong O_{\widetilde{A}} = \overline{O_{\widetilde{A}}} \cap \mathbb{G}_m^m \hookrightarrow \mathbb{G}_m^m\) be the closed immersion. Then, by \cite[Proposition~2.1]{SW09},
  \[
  H^{m-n}\iota^!\FT(H^{n}_{\mathrm{Dw}}(Y/S)_{\overline{\bm{\gamma}}})|_{\mathbb{G}_m^m} \leftarrow H^{m-n}\iota^!\FT(M_A(-\bm{\gamma}))|_{\mathbb{G}_m^m} \cong H^{m-n}\iota^!\iota_+\mathcal{K}_{T\bm{\gamma}-\1} \cong \mathcal{K}_{T\bm{\gamma}-\1}
  \]
  is injective because it is a nonzero homomorphism from an irreducible \(\D_{\mathbb{G}_m^n}\)-module \(\mathcal{K}_{T\bm{\gamma}-\1}\). By Kashiwara's equivalence \cite[Proposition~7.4.1]{mebkhout}, \(\FT(f_{\bm{\gamma}})\) is therefore injective on \(\mathbb{G}_m^m\).
\end{proof}

\subsection{Main Theorem}
Let \(A\) be the matrix defined in (\ref{eq:matrix}). Recall that \(U\subset S=\mathbb{A}^m_\C\) is the open subset over which the family \(Y/S\) in (\ref{eq:fermat-deformation}) is smooth, and put \(Y_U=Y\times_S U\). Let \(\widetilde{A}=TA\) be the normalization in (\ref{nomalization}). For any two lifts \(\bm{\gamma}, \bm{\gamma}'\in\Z^n\) of \(\overline{\bm{\gamma}}\in H_{n,d}/\Z A\), we have \(T(\bm{\gamma}-\bm{\gamma}')\in \Z\widetilde{A}\). Since Kummer modules whose parameters differ by an integral vector are isomorphic, \(\mathcal{K}_{T\bm{\gamma}-\1}\cong\mathcal{K}_{T\bm{\gamma}'-\1}\). Hence the isomorphism class of \(\FT(\phi^{\widetilde{A}}_{!+}\mathcal{K}_{T\bm{\gamma}-\1})\) depends only on \(\overline{\bm{\gamma}}\).

\begin{theorem}
  \label{thm:gauss-manin-gkz}
  If \(H^{n-2}_{\mathrm{dR}}(Y_U/U)^{\mathrm{prim}}_{\overline{\bm{\gamma}}} \neq 0\), then
  \[
    H^{n-2}_{\mathrm{dR}}(Y_U/U)^{\mathrm{prim}}_{\overline{\bm{\gamma}}} \cong \FT(\phi^{\widetilde{A}}_{!+}\mathcal{K}_{T\bm{\gamma}-\1})|_U
  \]
  for any lift \(\bm{\gamma}\in \Z^n\) of \(\overline{\bm{\gamma}}\).
    \end{theorem}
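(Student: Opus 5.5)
We argue via Dwork cohomology. By \eqref{eq:comparison} and the compatibility with the $\Gamma_A/\Delta$-action, it suffices to show $H^n_{\mathrm{Dw}}(Y/S)_{\overline{\bm{\gamma}}}|_U\cong \FT(\phi^{\widetilde{A}}_{!+}\mathcal{K}_{T\bm{\gamma}-\1})|_U$. Neither side depends on the lift, so we may assume $\gamma_i>0$ for all $i$. Proposition~\ref{thm:well_definedness} then gives the surjection $f_{\bm{\gamma}}:M_A(-\bm{\gamma})\to H^n_{\mathrm{Dw}}(Y/S)_{\overline{\bm{\gamma}}}$. Replacing $\bm{\gamma}$ by $\bm{\gamma}+kd\1$ gives another admissible lift, with $M_A(-\bm{\gamma}-kd\1)\to H^n_{\mathrm{Dw}}(Y/S)_{\overline{\bm{\gamma}}}$ sending $1\mapsto\omega_{\bm{\gamma}+kd\1}=(\partial_1\cdots\partial_n)^{k}\bullet\omega_{\bm{\gamma}}$. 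This map is surjective, and it factors through right multiplication by $(\partial_1\cdots\partial_n)^{2k}$ into $M_A(-\bm{\gamma}+kd\1)$, provided the target is chosen so that $f$ extends to it.

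\textbf{Step 1: the map factors through the image.} For $k\gg0$, I would show that $f_{\bm{\gamma}+kd\1}$ factors as
\[
M_A(-\bm{\gamma}-kd\1)\twoheadrightarrow \mathrm{Im}\bigl(\cdot(\partial_1\cdots\partial_n)^{2k}\bigr)\subset M_A(-\bm{\gamma}+kd\1)\to H^n_{\mathrm{Dw}}(Y/S)_{\overline{\bm{\gamma}}} .
\]
The difficulty is that $-\bm{\gamma}+kd\1$ has negative entries, so $\omega_{\bm{\gamma}-kd\1}$ is not defined. Instead I would show that the kernel of the contiguity map is killed by $f$. Apply $\FT$ and use Proposition~\ref{prop:contiguity-image}. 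Then $\FT(M_A(-\bm{\gamma}-kd\1))\cong\FT(\phi^{\widetilde{A}}_!\mathcal{K})$, and the image is the minimal extension $\phi^{\widetilde{A}}_{!+}\mathcal{K}$. The kernel of $\phi_!\mathcal{K}\to\phi_{!+}\mathcal{K}$ is supported on $\overline{O_{\widetilde A}}\setminus O_{\widetilde A}$, which lies in the coordinate hyperplanes. By Proposition~\ref{isomorphism_on_Gm}, $\FT(f)$ is an isomorphism on $\mathbb{G}_m^m$, so it restricts there to an isomorphism $\phi_!\mathcal{K}|_{\mathbb{G}_m^m}\cong\iota_+\mathcal{K}$.

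To conclude that $\FT(f)$ kills the kernel, I would show that $\FT(H^n_{\mathrm{Dw}})_{\overline{\bm{\gamma}}}$ has no nonzero quotient supported on the boundary. The plan is to deduce this from the fact that $\partial_j$ acts injectively on Dwork cohomology, equivalently $\lambda_j$ acts injectively after $\FT$. I would try to prove this injectivity via the Euler relations \eqref{eq:euler_elevation}, or via duality of Dwork cohomology, which makes $H^n_{\mathrm{Dw}}$ self-dual up to sign. If no boundary quotient exists, the composite $\phi_!\to\FT(H^n_{\mathrm{Dw}})$ kills $\ker(\phi_!\to\phi_{!+})$, because that kernel's image would be a boundary-supported submodule. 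Since $\phi_{!+}$ is simple, we then get a surjection $\FT(\phi_{!+}\mathcal{K})\twoheadrightarrow \FT(H^n_{\mathrm{Dw}}(Y/S)_{\overline{\bm{\gamma}}})$, which is nonzero by hypothesis. Simplicity of $\phi_{!+}\mathcal{K}$, the intermediate extension of an irreducible module, would then even give the isomorphism directly. I nevertheless plan a rank argument as a safeguard, since the no-boundary-quotient step is delicate.

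\textbf{Step 2: rank comparison.} Restrict the surjection $\mathrm{Im}\to H^n_{\mathrm{Dw}}(Y/S)_{\overline{\bm{\gamma}}}$ to $U$. There the target is locally free of rank $\#\mathfrak{I}_{\bm{\gamma}}$ by the Lemma. Both sides are regular holonomic, by Proposition~\ref{prop:regularity} and the fact that submodules of regular modules are regular. It then suffices to show that the holonomic rank of the image of the contiguity operator at a generic point is at most $\#\mathfrak{I}_{\bm{\gamma}}$. A surjection between modules of equal generic rank with locally free target is an isomorphism on $U$, since the kernel would be holonomic with generic rank zero and, on $U$, an $\mathcal{O}$-coherent connection.

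\textbf{Main obstacle.} The main obstacle is the rank estimate $\operatorname{rank}\mathrm{Im}\le\#\mathfrak{I}_{\bm{\gamma}}$. I would compute the series solutions of $M_A(-\bm{\gamma}+kd\1)$ by Gr\"obner deformation with respect to a generic weight, following \cite{SST}. Take the initial ideal for the weight that favors $\lambda_{n+1},\dots,\lambda_m$, so that the fake exponents are indexed by residues in $\Z A/d\Z^n$. Then count which canonical series survive composition with the contiguity operator. Series whose exponents have some coordinate with $\beta_i+\gamma_i\equiv0\bmod d$ should be killed by $(\partial_1\cdots\partial_n)^{2k}$, or should fail to lift to the image. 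This would show that the solution space of the image has dimension at most $\#\mathfrak{I}_{\bm{\gamma}}$.
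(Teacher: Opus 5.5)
\textbf{There is a genuine gap in Step~1, and Step~1 carries the whole argument.} To factor $\FT(f_{\bm\gamma+kd\1})$ through $\phi^{\widetilde A}_{!+}\mathcal K_{T\bm\gamma-\1}$, you need the image of $\Ker(\phi^{\widetilde A}_{!}\mathcal K_{T\bm\gamma-\1}\to\phi^{\widetilde A}_{+}\mathcal K_{T\bm\gamma-\1})$ to vanish. That is, $\FT(H^n_{\mathrm{Dw}}(Y/S)_{\overline{\bm\gamma}})$ must have no nonzero \emph{submodule} supported on the coordinate hyperplanes. You first phrase this as ``no quotient'', which is the wrong condition.

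You give no proof of this, and the tools you name cannot supply one. The Euler relations follow from the GKZ relations, which already hold in $M_A(-\bm\gamma)$, and there $\partial_j$ need not act injectively. For example, take $n=m=d=2$ and $\bm\gamma=(2,2)$. Then $M_A(-\bm\gamma)=D_2/D_2(\partial_1\lambda_1,\partial_2\lambda_2)$, and $\partial_1$ kills the nonzero class of $\lambda_1$. So any argument using only those relations would prove a false statement. Self-duality only exchanges ``no boundary submodule'' with ``no boundary quotient'' and proves neither.

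The claim is in fact true, but it needs a genuinely geometric input. One route is to identify $H^n_{\mathrm{Dw}}(Y/S)_{\overline{\bm\gamma}}$ with a $\Gamma_A$-isotypic summand of $\FT(\psi_+\mathcal O_{\mathbb A^n})$, where $\psi(X)=(X^{\bm a_1},\dots,X^{\bm a_m})$ is a finite map, and then use adjunction for $\psi$. Nothing of this kind is in your proposal.

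\textbf{Step~2 is not a safeguard.} The surjection $\mathrm{Im}\to H^n_{\mathrm{Dw}}(Y/S)_{\overline{\bm\gamma}}$ that you restrict to $U$ exists only if Step~1 succeeds. If Step~1 succeeds, simplicity of $\phi^{\widetilde A}_{!+}\mathcal K_{T\bm\gamma-\1}$ already finishes the proof. Also, the kernel of that surjection lies in $\mathrm{Im}|_U$, not in the locally free target, so your $\mathcal O$-coherence argument does not apply to it.

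Moreover, the bound you would need, $\rank\mathrm{Im}\le\#\mathfrak I_{\bm\gamma}$, is equivalent to a \emph{lower} bound $\rank C_k\ge\operatorname{vol}(A)-\#\mathfrak I_{\bm\gamma}$ on the cokernel. The fake-indicial-ideal estimate only gives upper bounds. You would have to produce that many independent solutions of $M_A(-\bm\gamma+kd\1)$ annihilated by $(\partial_1\cdots\partial_n)^{2k}$, which your sketch only gestures at.

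\textbf{How the paper avoids both problems.} Write $\Phi$ for your map $\FT(f_{\bm\gamma+kd\1})\colon\phi^{\widetilde A}_!\mathcal K_{T\bm\gamma-\1}\twoheadrightarrow\FT(H^n_{\mathrm{Dw}}(Y/S)_{\overline{\bm\gamma}})$, and factor $\phi^{\widetilde A}=j\circ\iota$. The paper proves the easy inclusion instead of yours.
\begin{itemize}
\item By Proposition~\ref{isomorphism_on_Gm}, $\Ker\Phi|_{\mathbb G_m^m}=0$.
\item Adjunction then gives $\operatorname{Hom}(\Ker\Phi,j_+\iota_+\mathcal K_{T\bm\gamma-\1})=\operatorname{Hom}(j^!\Ker\Phi,\iota_+\mathcal K_{T\bm\gamma-\1})=0$.
\item Hence $\Ker\Phi\subseteq\Ker(\phi^{\widetilde A}_!\mathcal K_{T\bm\gamma-\1}\to\phi^{\widetilde A}_+\mathcal K_{T\bm\gamma-\1})$.
\item This yields a surjection $H^n_{\mathrm{Dw}}(Y/S)_{\overline{\bm\gamma}}\twoheadrightarrow\FT(\phi^{\widetilde A}_{!+}\mathcal K_{T\bm\gamma-\1})$, in the direction opposite to yours.
\end{itemize}
After that, only $\rank\FT(\phi^{\widetilde A}_{!+}\mathcal K_{T\bm\gamma-\1})\ge\#\mathfrak I_{\bm\gamma}$ is needed. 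This is the upper bound $\rank C_k\le\dim_\C\C[\theta_1,\dots,\theta_m]/(\fin_{\bm w}(H_A(-\bm\gamma+kd\1))+\langle[\theta]_{\bm u_k}\rangle)$. The right-hand side counts the top-dimensional standard pairs whose fake exponent has some coordinate $i\le n$ in $\{0,\dots,2k-1\}$, which is exactly the complement of $\mathfrak I_{\bm\gamma}$.
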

    \begin{proof}
      Let \(j:\mathbb{G}_m^m \hookrightarrow \mathbb{A}^m_\C\) be the inclusion. The monomial map \(\phi^{\widetilde{A}}\) factors as
      \[
        \mathbb{G}_m^n \xlongrightarrow{\iota} \mathbb{G}_m^m \xlongrightarrow{j} \mathbb{A}^m_\C.
      \]
Here \(\iota\) is given by
\(\iota(t_1,\ldots,t_n) = (t^{\widetilde{\bm{a}}_1},\ldots,t^{\widetilde{\bm{a}}_m})\). Since \(\Z\widetilde{A}=\Z^n\), the homomorphism of Laurent polynomial rings induced by \(\iota\) is surjective, so \(\iota\) is a closed immersion.

From Proposition \ref{thm:well_definedness} and Lemma \ref{lemma:non-resonance}, we have a surjective homomorphism
\[
  \Phi:\phi^{\widetilde{A}}_!\mathcal{K}_{T\bm{\gamma}-\1} = j_!\iota_+\mathcal{K}_{T\bm{\gamma}-\1} \twoheadrightarrow \FT(H^{n}_{\mathrm{Dw}}(Y/S)_{\overline{\bm{\gamma}}}).
\]
Since the inverse image by the open immersion \(j\) is exact, the sequence
\[
  0 \rightarrow (\ker \Phi)|_{\mathbb{G}_m^m} \rightarrow \iota_+\mathcal{K}_{T\bm{\gamma}-\1} \rightarrow \FT(H^{n}_{\mathrm{Dw}}(Y/S)_{\overline{\bm{\gamma}}})|_{\mathbb{G}_m^m}
\]
is exact, and we have \((\ker \Phi)|_{\mathbb{G}_m^m} = 0\) by Proposition \ref{isomorphism_on_Gm}. By adjunction,
\[
  \operatorname{Hom}_{D^b(\D_{\mathbb{A}^m_\C})}(\ker \Phi,j_+\iota_+\mathcal K_{T\bm{\gamma}-\1})
  \cong \operatorname{Hom}_{D^b(\D_{\mathbb{G}_m^m})}(j^!\ker \Phi,\iota_+\mathcal K_{T\bm{\gamma}-\1})
  =0.
\]
This implies that the composition \(\ker \Phi \rightarrow \phi^{\widetilde{A}}_!\mathcal{K}_{T\bm{\gamma}-\1} \rightarrow \phi^{\widetilde{A}}_+\mathcal{K}_{T\bm{\gamma}-\1}\) is zero. Hence there exists a unique morphism
\[
  \Psi:\FT(H^{n}_{\mathrm{Dw}}(Y/S)_{\overline{\bm{\gamma}}}) \longrightarrow \phi^{\widetilde{A}}_{!+}\mathcal{K}_{T\bm{\gamma}-\1} = \mathrm{Im}(\phi^{\widetilde{A}}_!\mathcal{K}_{T\bm{\gamma}-\1} \rightarrow \phi^{\widetilde{A}}_+\mathcal{K}_{T\bm{\gamma}-\1})
\]
such that the diagram
\[
\begin{tikzcd}
\phi^{\widetilde{A}}_!\mathcal{K}_{T\bm{\gamma}-\1} \arrow[r, two heads, "\Phi"]
      \arrow[dr, two heads]
&
\FT(H^{n}_{\mathrm{Dw}}(Y/S)_{\overline{\bm{\gamma}}}) \arrow[d, dashed, "\Psi"]\\
&
\phi^{\widetilde{A}}_{!+}\mathcal{K}_{T\bm{\gamma}-\1}
\end{tikzcd}
\]
commutes. Thus we obtain a surjection
\[
  \FT(\Psi):H^{n}_{\mathrm{Dw}}(Y/S)_{\overline{\bm{\gamma}}} \twoheadrightarrow \FT(\phi^{\widetilde{A}}_{!+}\mathcal{K}_{T\bm{\gamma}-\1}).
\]

Then it is enough to prove the inequality of the holonomic ranks
\begin{equation}
  \label{eq:inequality}
  \rank H^{n}_{\mathrm{Dw}}(Y/S)_{\overline{\bm{\gamma}}} \leq \rank \FT(\phi^{\widetilde{A}}_{!+}\mathcal{K}_{T\bm{\gamma}-\1}).
\end{equation}
By Proposition \ref{prop:contiguity-image},
\begin{align*}
  \FT(\phi^{\widetilde{A}}_{!+}\mathcal{K}_{T\bm{\gamma}-\1}) &= \mathrm{Im}(M_A(-\bm{\gamma} - kd\1) \xlongrightarrow{\cdot (\partial_1\cdots \partial_n)^{2k}} M_A(-\bm{\gamma} + kd\1)).
  \end{align*}
  The cokernel of this morphism is
  \[
    C_k = D_m \big/ \left(\sum_{i=1}^nD_m(E_i + \gamma_i-kd)+D_m I_A + D_m(\partial_1\cdots \partial_n)^{2k}\right),
  \]
and
\[
  \rank C_k = \rank M_A(-\bm{\gamma} + kd\1) - \rank \FT(\phi^{\widetilde{A}}_{!+}\mathcal{K}_{T\bm{\gamma}-\1}).
\]

Thus to show the inequality (\ref{eq:inequality}), it is enough to show

\begin{equation}
  \label{eq:main_inequality}
  \rank C_k \leq \rank M_A(-\bm{\gamma} + kd\1) - \rank H^{n}_{\mathrm{Dw}}(Y/S)_{\overline{\bm{\gamma}}}
\end{equation}
for a sufficiently large \(k\). This inequality is proved in Proposition~\ref{prop:inequality} below.
\end{proof}

  Since the minimal extension of an irreducible \(\D\)-module is irreducible, we obtain the following corollary.
  \begin{corollary}
    Each nonzero component \(H^{n-2}_{\dR}(Y_U/U)^{\mathrm{prim}}_{\overline{\bm{\gamma}}}\) is an irreducible \(\D_U\)-module.
  \end{corollary}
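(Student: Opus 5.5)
The statement to prove is the Corollary: each nonzero component \(H^{n-2}_{\dR}(Y_U/U)^{\mathrm{prim}}_{\overline{\bm{\gamma}}}\) is irreducible as a \(\D_U\)-module. The plan is to reduce to Theorem~\ref{thm:gauss-manin-gkz} and then use two standard facts: irreducibility of minimal extensions, and that Fourier transform and restriction to an open set preserve irreducibility.

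First, by Theorem~\ref{thm:gauss-manin-gkz},
\[
H^{n-2}_{\dR}(Y_U/U)^{\mathrm{prim}}_{\overline{\bm{\gamma}}}\cong \FT(\phi^{\widetilde A}_{!+}\mathcal K_{T\bm{\gamma}-\1})|_U,
\]
so it suffices to show the right-hand side is irreducible.

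Second, \(\mathcal K_{T\bm{\gamma}-\1}\) is an irreducible \(\D_{\mathbb G_m^n}\)-module. It is a rank-one integrable connection on the connected variety \(\mathbb G_m^n\), so any nonzero submodule is \(\mathcal O\)-coherent of full rank, and the quotient is a zero connection. The morphism \(\phi^{\widetilde A}=j\circ\iota\) is an affine locally closed immersion: \(\iota\) is a closed immersion, as noted in the proof of Theorem~\ref{thm:gauss-manin-gkz}, and \(j\) is an affine open immersion. Hence \(\phi^{\widetilde A}_{!+}\mathcal K_{T\bm{\gamma}-\1}\) is the minimal extension of an irreducible holonomic module, and is irreducible by \cite[Theorem~3.4.2]{HTT}; equivalently, it has no nonzero submodules or quotients supported on the boundary of \(O_{\widetilde A}\).

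Third, \(\FT\) is an equivalence of abelian categories of \(D_m\)-modules, so it preserves irreducibility. Thus \(N:=\FT(\phi^{\widetilde A}_{!+}\mathcal K_{T\bm{\gamma}-\1})\) is an irreducible holonomic \(\D_{\mathbb A^m}\)-module.

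Finally, we restrict to \(U\). Let \(u:U\hookrightarrow\mathbb A^m\) be the open inclusion. Because the component is assumed nonzero, \(N|_U\neq0\). Suppose \(0\neq M'\subsetneq N|_U\) is a submodule, and take \(M''\subset N\) to be the maximal submodule extending \(M'\), namely the preimage of \(u_*M'\) under \(N\to u_*u^*N\). Then \(M''|_U=M'\), which is neither \(0\) nor \(N|_U\), so \(M''\) is a nonzero proper submodule of \(N\). This contradicts the irreducibility of \(N\). Since \(N|_U\) is holonomic, this shows it is irreducible.

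The only point needing care is this last step, in particular that the preimage construction gives a \(\D\)-submodule. That holds because \(N\to u_*u^*N\) is \(\D\)-linear and \(u_*M'\) is a \(\D\)-submodule of \(u_*u^*N\).
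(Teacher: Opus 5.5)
Your proposal is correct and takes the same route as the paper: the paper deduces the corollary from Theorem~\ref{thm:gauss-manin-gkz} together with the fact that the minimal extension of an irreducible module is irreducible. You also spell out, correctly, the steps the paper leaves implicit: that the Kummer module is irreducible, that \(\FT\) preserves irreducibility, and that a nonzero restriction of an irreducible module to the open set \(U\) remains irreducible.
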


  \section{An upper bound for the holonomic rank}
  Throughout this section, let \(A\) be the matrix defined in (\ref{eq:matrix}).
    \subsection{Initial ideals}
  For \(P_1,\ldots,P_s\) in \(\C[\partial_1,\ldots,\partial_m]\), we write \(\langle P_1,\ldots,P_s\rangle\) for the ideal generated by \(P_1,\ldots,P_s\).
  For two elements \(\bm{v}, \bm{w} \in \R^m\), \(\bm{v}\cdot \bm{w}\) denotes the inner product.
\begin{definition}
  Let \(\bm{w} = (w_1, \ldots, w_m) \in \R^m\) and \(f = \sum_{\bm{u}} c_{\bm{u}} \partial^{\bm{u}} \in \C[\partial_1, \ldots, \partial_m]\). The initial form \(\mathrm{in}_{\bm{w}}(f)\) is the sum of all terms \(c_{\bm{u}} \partial^{\bm{u}}\) for which \(\bm{w} \cdot \bm{u}\) is maximal. The ideal
  \[
    \mathrm{in}_{\bm{w}}(I_A) = \langle \mathrm{in}_{\bm{w}}(f) \mid f \in I_A \rangle.
  \]
  of \(\C[\partial_1, \ldots, \partial_m]\) is called the initial ideal of \(I_A\).

  For \(P=\sum_{\bm{u},\bm{v}}c_{\bm{u},\bm{v}}\lambda^{\bm{u}}\partial^{\bm{v}}\in D_m\),
  with \(\bm{u},\bm{v}\in\Z_{\geq0}^m\), the initial form
  \(\mathrm{in}_{(-\bm{w},\bm{w})}(P)\) is the sum of all nonzero terms
  for which \(-\bm{w}\cdot\bm{u}+\bm{w}\cdot\bm{v}\) is maximal.
  We set \(\mathrm{in}_{(-\bm{w},\bm{w})}(0)=0\).
  For a left ideal \(I\subset D_m\), we define
  \[
    \mathrm{in}_{(-\bm{w},\bm{w})}(I) := \sum_{P\in I}D_m\,\mathrm{in}_{(-\bm{w},\bm{w})}(P) \subset D_m.
  \]
\end{definition}

\begin{lemma}\label{generic_weight_vector}
  Let \(w_{n+1},\ldots,w_m\) be positive real numbers which are
  linearly independent over \(\Q\), and set \(\bm{w}=(0,\ldots,0,w_{n+1},\ldots,w_m)\).
  \begin{enumerate}[(1)]
  \item If \(\bm{u} = (u_1, \ldots, u_m),\bm{v} = (v_1, \ldots, v_m) \in \Z_{\geq 0}^m\) satisfy \(A \bm{u}=A \bm{v}\) and \(\bm{w} \cdot \bm{u} = \bm{w} \cdot \bm{v}\), then \(\bm{u}=\bm{v}\).

  \item \(\mathrm{in}_{\bm{w}}(I_A) = \langle \mathrm{in}_{\bm{w}}(\partial^{\bm{u}}-\partial^{\bm{v}}) \mid A\bm{u} = A\bm{v} \rangle\).
  \item \(\mathrm{in}_{\bm{w}}(I_A)\) is a monomial ideal.
  \end{enumerate}
\end{lemma}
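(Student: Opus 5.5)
For (1), let \(\bm{u},\bm{v}\in\Z_{\geq0}^m\) with \(A\bm{u}=A\bm{v}\) and \(\bm{w}\cdot\bm{u}=\bm{w}\cdot\bm{v}\), and put \(\bm{c}=\bm{u}-\bm{v}\in\Z^m\). Since \(w_1=\cdots=w_n=0\), the condition \(\bm{w}\cdot\bm{c}=0\) reads \(\sum_{j=n+1}^m w_jc_j=0\) with \(c_j\in\Z\). The \(\Q\)-linear independence of \(w_{n+1},\ldots,w_m\) forces \(c_{n+1}=\cdots=c_m=0\). The condition \(A\bm{c}=0\) then reduces to \(\sum_{i=1}^n c_i\,d\bm{e}_i=0\), because the first \(n\) columns of \(A\) are \(d\bm{e}_1,\ldots,d\bm{e}_n\). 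Hence \(c_1=\cdots=c_n=0\), so \(\bm{u}=\bm{v}\). The only point requiring care is this use of the special shape of \(A\), which makes the weight vector generic despite having zero entries.

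For (3), take any binomial \(\partial^{\bm{u}}-\partial^{\bm{v}}\) with \(A\bm{u}=A\bm{v}\) and \(\bm{u}\neq\bm{v}\). By (1), \(\bm{w}\cdot\bm{u}\neq\bm{w}\cdot\bm{v}\), so \(\mathrm{in}_{\bm{w}}(\partial^{\bm{u}}-\partial^{\bm{v}})\) is the single monomial \(\pm\partial^{\bm{u}}\) or \(\pm\partial^{\bm{v}}\). We also need that every element of \(I_A\) is a combination of such binomials. To see this, grade \(\C[\partial]\) by \(\Z^n\) via \(\deg\partial^{\bm{u}}=A\bm{u}\). The toric ideal is homogeneous for this grading, so for \(f\in I_A\) each \(A\)-graded piece \(f_{\bm{b}}\) lies in \(I_A\). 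Modulo \(I_A\), all monomials \(\partial^{\bm{u}}\) with \(A\bm{u}=\bm{b}\) coincide, and they are nonzero in \(\C[\partial]/I_A\cong\C[\Z_{\geq0}A]\). Hence the coefficient sum of \(f_{\bm{b}}\) is \(0\), so \(f_{\bm{b}}\) is a \(\C\)-linear combination of binomials \(\partial^{\bm{u}}-\partial^{\bm{v}}\) with \(A\bm{u}=A\bm{v}=\bm{b}\). By (1), the monomials in a fixed fiber \(\{\bm{u}\in\Z_{\geq0}^m : A\bm{u}=\bm{b}\}\) have pairwise distinct \(\bm{w}\)-weights. Therefore \(\mathrm{in}_{\bm{w}}(f)\), taken within a graded piece, is a single term, and it is a monomial times a scalar. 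We can now show that \(\mathrm{in}_{\bm{w}}(I_A)\) is generated by monomials. A general \(f\) has several graded pieces, and \(\mathrm{in}_{\bm{w}}(f)\) is the sum of the \(\mathrm{in}_{\bm{w}}(f_{\bm{b}})\) over those pieces attaining the global maximal weight. Each such piece lies in \(I_A\) and has monomial initial form, so each summand lies in the ideal generated by initial forms of homogeneous elements, which are monomials. Hence \(\mathrm{in}_{\bm{w}}(I_A)\) is generated by monomials.

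For (2), the inclusion \(\supseteq\) is clear. For \(\subseteq\), it suffices by the above to treat a homogeneous \(f=f_{\bm{b}}\in I_A\). Let \(\partial^{\bm{u}}\) be its top-weight monomial, which is unique. Some other monomial \(\partial^{\bm{v}}\) appears in \(f\), since the coefficient sum is \(0\) and \(f\neq0\), and \(A\bm{v}=A\bm{u}\) with \(\bm{w}\cdot\bm{v}<\bm{w}\cdot\bm{u}\). Then \(\mathrm{in}_{\bm{w}}(\partial^{\bm{u}}-\partial^{\bm{v}})=\partial^{\bm{u}}\), so \(\mathrm{in}_{\bm{w}}(f)\) is a scalar multiple of an element of the right-hand side. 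Combined with the reduction to homogeneous pieces, this gives (2), and (3) follows immediately.

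The main obstacle, mild as it is, is the reduction from arbitrary \(f\in I_A\) to \(A\)-homogeneous pieces and binomials; I would cite the standard fact that \(I_A\) is \(A\)-graded and spanned by binomials, as in \cite{SST}.
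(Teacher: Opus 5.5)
Your proof is correct and is essentially the paper's argument. Part (1) is identical. For (2) and (3), the paper likewise observes that each monomial $\partial^{\bm{u}}$ occurring in $\mathrm{in}_{\bm{w}}(f)$ comes with another term $\partial^{\bm{v}}$ of $f$ satisfying $A\bm{u}=A\bm{v}$, so that (1) gives $\mathrm{in}_{\bm{w}}(\partial^{\bm{u}}-\partial^{\bm{v}})=\partial^{\bm{u}}$. The differences are only that you spell out the $A$-grading and coefficient-sum argument for the existence of $\partial^{\bm{v}}$, which the paper uses tacitly, and that you prove (3) before (2).
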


\begin{proof}
Suppose that \(A\bm{u}=A\bm{v}\) and
\(\bm{w}\cdot\bm{u}=\bm{w}\cdot\bm{v}\). Then
\[
  \sum_{j=n+1}^m w_j(u_j-v_j)=0.
\]
By the linear independence of \(w_{n+1},\ldots,w_m\), \(u_j=v_j\) for every \(j=n+1, \ldots, m\). 
Hence
\[
  0 = A(\bm{u}-\bm{v}) = d\sum_{i=1}^n(u_i-v_i)\bm{e}_i =
  \begin{pmatrix}
    d(u_1-v_1) \\ \vdots \\ d(u_n - v_n)
  \end{pmatrix}
  ,
\]
so \(u_i=v_i\) also for \(i\leq n\). This proves (1).

Let
\(f\) be an element of \(I_A\). If \(\partial^{\bm{u}}\) appears in \(\operatorname{in}_{\bm{w}}(f)\), then there exists another term \(\partial^{\bm{v}}\) in \(f\) such that \(A(\bm{u} - \bm{v}) = 0\). By (1), \(\bm{w} \cdot \bm{u} > \bm{w} \cdot \bm{v}\), and hence \(\operatorname{in}_{\bm{w}}(\partial^{\bm{u}} - \partial^{\bm{v}}) = \partial^{\bm{u}}\).

Finally, (3) follows from (1) and (2).
\end{proof}

\subsection{Fake Indicial Ideals and Fake Exponents}
We use the notation of \S \ref{sec:definition_gkz}. Throughout this subsection, let \(\bm{w}\) be a vector as in Lemma~\ref{generic_weight_vector}. The results from \cite[\S\S~4.1 and 4.6]{SST} cited below assume that \(\bm{w}\) is generic. This assumption ensures that \(\operatorname{in}_{\bm{w}}(I_A)\) is a monomial ideal and is satisfied when \(\bm{w}\) is chosen as in Lemma~\ref{generic_weight_vector}.

Write \(\theta_i=\lambda_i\partial_i\). Since \(\theta_i, \theta_j\) commute with each other, \(\C[\theta_1, \ldots, \theta_m]\) is a commutative subalgebra of \(D_m\). For \(r\in\Z_{\geq0}\), we set
\[
  [\theta_j]_r:=\prod_{\ell=0}^{r-1}(\theta_j-\ell),
  \qquad [\theta_j]_0:=1.
\]
For \(\bm{u} = (u_1, \ldots, u_m) \in \Z_{\geq0}^m\), write
\[
  [\theta]_{\bm{u}}:=\prod_{i=1}^m[\theta_i]_{u_i}.
\]
We use the same notation \(\langle P_1,\ldots,P_s\rangle\) as for ideals in \(\C[\partial_1,\ldots,\partial_m]\) to denote the ideal generated by \(P_1,\ldots,P_s\) in \(\C[\theta_1,\ldots,\theta_m]\).
For \(\bm{\beta}\in\C^n\), we set
\[
  \langle A\bm{\theta}-\bm{\beta}\rangle
  :=\left\langle \sum_{j=1}^m a_{ij}\theta_j-\beta_i
  \;\middle|\; i=1,\ldots,n\right\rangle
  \subset\C[\theta_1,\ldots,\theta_m].
\]

\begin{definition}
  The ideal
  \[
    \widetilde{\mathrm{in}}_{\bm{w}}(I_A)
    :=\left\langle [\theta]_{\bm{u}}
      \;\middle|\; \partial^{\bm{u}}\in\mathrm{in}_{\bm{w}}(I_A), \bm{u} \in \Z_{\geq 0}^m \right \rangle
    \subset\C[\theta_1, \ldots, \theta_m].
  \]
  is called the distraction of \(\mathrm{in}_{\bm{w}}(I_A)\).
  The fake indicial ideal of the GKZ system is
  \[
    \fin_{\bm{w}}(H_A(\bm{\beta}))
    :=
    \widetilde{\mathrm{in}}_{\bm{w}}(I_A)
    +
    \langle A\bm{\theta}-\bm{\beta}\rangle \subset \C[\theta_1, \ldots, \theta_m].
  \]
\end{definition}

\begin{definition}[{\cite[\S~3.2, p.~109]{SST}}]\label{def:standard_pair}
  A standard pair of \(\mathrm{in}_{\bm{w}}(I_A)\) is a pair \((\partial^{\bm{u}}, \sigma)\) where \(\bm{u} \in \Z_{\geq 0}^m\) and \(\sigma \subset \{1, \ldots, m\}\) satisfying the following conditions:
  \begin{enumerate}[(i)]
  \item \(u_j = 0\) for all \(j \in \sigma\),
  \item \(\partial^{\bm{u}}\cdot \prod_{j \in \sigma}\partial_j^{b_j} \notin \mathrm{in}_{\bm{w}}(I_A)\) for any \((b_j)_{j \in \sigma} \in \Z_{\geq 0}^{\#\sigma}\),
  \item For every \(\ell\in\{1,\ldots,m\}\setminus\sigma\), there exist
  \((b_j)_{j\in\sigma\cup\{\ell\}}\in\Z_{\geq0}^{|\sigma|+1}\) such that
  \[
    \partial^{\bm{u}}\partial_\ell^{b_\ell}\prod_{j\in\sigma}\partial_j^{b_j}\in\mathrm{in}_{\bm{w}}(I_A).
  \]
  \end{enumerate}

  A standard pair \((\partial^{\bm{u}},\sigma)\) is called top-dimensional if
  \(\#\sigma = n = \operatorname{rank}(A)\), and it is called embedded if \(\#\sigma < n\).
\end{definition}

We denote the set of standard pairs of \(\mathrm{in}_{\bm{w}}(I_A)\) by \(\mathcal{S}(\mathrm{in}_{\bm{w}}(I_A))\) and the set of top-dimensional standard pairs of \(\mathrm{in}_{\bm{w}}(I_A)\) by \(\mathcal{T}(\mathrm{in}_{\bm{w}}(I_A))\). We write \([n]=\{1,\ldots,n\}\).

\begin{proposition}
  Every standard pair \((\partial^{\bm{u}}, \sigma) \in \mathcal{S}(\mathrm{in}_{\bm{w}}(I_A))\) satisfies \(\sigma \subset [n]\). In particular, every top-dimensional standard pair is of the form \((\partial^{\bm{u}}, [n])\).
\end{proposition}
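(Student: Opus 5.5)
It suffices to show that for every \(\ell\in\{n+1,\ldots,m\}\) some power of \(\partial_\ell\) lies in \(\mathrm{in}_{\bm{w}}(I_A)\). Indeed, suppose \((\partial^{\bm{u}},\sigma)\) is a standard pair and \(\ell\in\sigma\) with \(\ell>n\). If \(\partial_\ell^{N}\in\mathrm{in}_{\bm{w}}(I_A)\), then \(\partial^{\bm{u}}\partial_\ell^{N}\in\mathrm{in}_{\bm{w}}(I_A)\) because the latter is an ideal. This contradicts condition (ii) of Definition~\ref{def:standard_pair} with \(b_\ell=N\) and all other \(b_j=0\). Hence \(\sigma\subset[n]\). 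Since \(\#[n]=n=\operatorname{rank}A\), a top-dimensional standard pair must then have \(\sigma=[n]\).

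To produce the pure power, fix \(\ell>n\) and use the relation coming from \(d\bm{a}_\ell = \sum_{i=1}^n a_{i\ell}\,(d\bm{e}_i)=\sum_{i=1}^n a_{i\ell}\bm{a}_i\). Set \(\bm{u}=d\,\bm{e}'_\ell\) and \(\bm{v}=\sum_{i=1}^n a_{i\ell}\bm{e}'_i\), where \(\bm{e}'_j\) is the standard basis of \(\Z^m\). Then \(A\bm{u}=A\bm{v}\), so
\[
  \partial_\ell^{d}-\prod_{i=1}^n\partial_i^{a_{i\ell}}\in I_A .
\]
Because \(w_1=\cdots=w_n=0\) and \(w_\ell>0\), we have \(\bm{w}\cdot\bm{u}=dw_\ell>0=\bm{w}\cdot\bm{v}\). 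Therefore \(\mathrm{in}_{\bm{w}}\) of this binomial is \(\partial_\ell^{d}\), and \(\partial_\ell^d\in\mathrm{in}_{\bm{w}}(I_A)\). This is consistent with the description in Lemma~\ref{generic_weight_vector}(2).

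There is no serious obstacle. The only point to check is that the binomial is genuinely nonzero, which holds because \(\bm{u}\neq\bm{v}\) (their \(\ell\)-th coordinates are \(d\) and \(0\)). Its initial form is then a single monomial of strictly larger weight, so Lemma~\ref{generic_weight_vector}(1) is not even needed.
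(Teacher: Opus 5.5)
Your proof is correct and follows essentially the same route as the paper: the binomial \(\partial_\ell^d-\prod_{i=1}^n\partial_i^{a_{i\ell}}\in I_A\) has initial form \(\partial_\ell^d\) because \(w_1=\cdots=w_n=0<w_\ell\), so any \(\ell>n\) in \(\sigma\) would violate condition (ii) of Definition~\ref{def:standard_pair}. Your extra checks (that \(A\bm{u}=A\bm{v}\), that the binomial is nonzero, and that \(\sigma\subset[n]\) with \(\#\sigma=n\) forces \(\sigma=[n]\)) only spell out what the paper leaves implicit.
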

\begin{proof}
  Let \((\partial^{\bm{u}}, \sigma)\) be a standard pair of \(\mathrm{in}_{\bm{w}}(I_A)\). \(\partial_j^d - \partial_1^{a_{1j}} \cdots \partial_n^{a_{nj}} \in I_A\) for any \(j \in \{n+1, \ldots, m\}\). \(\partial_j^d \in \mathrm{in}_{\bm{w}}(I_A)\) by the choice of \(\bm{w}\), and hence \(\partial^{\bm{u}}\cdot\partial^{d}_j \in \mathrm{in}_{\bm{w}}(I_A)\).
To satisfy condition~(ii) of Definition~\ref{def:standard_pair}, we must have \(\sigma \subset [n]\).
\end{proof}

\begin{proposition}[{\cite[Lemma~4.1.3]{SST}}]
  \(\C[\theta_1, \ldots, \theta_m]/\fin_{\bm{w}}(H_A(\bm{\beta}))\) is Artinian and
  \[
    \sqrt{\fin_{\bm{w}}(H_A(\bm{\beta}))} = \bigcap_{(\partial^{\bm{u}}, \sigma) \in \mathcal{S}(\mathrm{in}_{\bm{w}}(I_A))} (\langle \theta_i - u_i \mid i \notin \sigma \rangle + \langle A\bm{\theta} - \bm{\beta} \rangle).
  \]
\end{proposition}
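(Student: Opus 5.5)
The plan is to reduce the statement to a description of the zero set of the distraction \(\widetilde{\mathrm{in}}_{\bm{w}}(I_A)\), and then intersect with the linear space \(\{A\bm{\theta}=\bm{\beta}\}\). Write \(M=\mathrm{in}_{\bm{w}}(I_A)\), which is a monomial ideal by Lemma~\ref{generic_weight_vector}. By the Nullstellensatz, \(V(I+J)=V(I)\cap V(J)\). It therefore suffices to prove
\[
  V\bigl(\widetilde{\mathrm{in}}_{\bm{w}}(I_A)\bigr)=\bigcup_{(\partial^{\bm{u}},\sigma)\in\mathcal{S}(M)}\{\bm{p}\in\C^m \mid p_i=u_i \text{ for all } i\notin\sigma\}
\]
and that each piece meets \(\{A\bm{\theta}=\bm{\beta}\}\) in finitely many points. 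Taking radicals then gives the displayed formula. A finite zero set gives that \(\C[\theta_1,\ldots,\theta_m]/\fin_{\bm{w}}(H_A(\bm{\beta}))\) is Artinian.

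Finiteness is the easy step. By the preceding proposition every standard pair has \(\sigma\subset[n]\). The columns \(\bm{a}_j=d\bm{e}_j\) with \(j\in\sigma\) are linearly independent. Fixing \(\theta_i=u_i\) for \(i\notin\sigma\), the system \(A\bm{\theta}=\bm{\beta}\) becomes \(\sum_{j\in\sigma}d\,\theta_j\bm{e}_j=\bm{\beta}-\sum_{i\notin\sigma}u_i\bm{a}_i\). This has at most one solution. Since there are finitely many standard pairs, the zero set is finite.

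For the inclusion \(\supseteq\), take a standard pair \((\partial^{\bm{u}},\sigma)\) and first a point \(\bm{p}\) with \(p_i=u_i\) for \(i\notin\sigma\) and \(p_j\in\Z_{\geq0}\) for \(j\in\sigma\). By condition (ii), \(\partial^{\bm{p}}=\partial^{\bm{u}}\prod_{j\in\sigma}\partial_j^{p_j}\notin M\). So for any \(\partial^{\bm{v}}\in M\) there is an index \(i\) with \(v_i>p_i\ge 0\). Then the factor \(\theta_i-p_i\) occurs in \([\theta_i]_{v_i}\), so \([\theta]_{\bm{v}}(\bm{p})=0\). Such points are Zariski dense in the affine space \(\{p_i=u_i,\ i\notin\sigma\}\), and the zero set is closed, so the whole affine space lies in \(V(\widetilde{\mathrm{in}}_{\bm{w}}(I_A))\).

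For \(\subseteq\), let \(\bm{p}\) be a zero. Let \(\rho\) be the set of indices with \(p_i\notin\Z_{\geq0}\); for these, \([\theta_i]_{r}(\bm{p})\neq0\) for every \(r\). Put \(\nu_i=p_i\) for \(i\notin\rho\) and \(\nu_j=0\) for \(j\in\rho\). Vanishing of every generator \([\theta]_{\bm{v}}\) at \(\bm{p}\) means the following: for each \(\partial^{\bm{v}}\in M\) some \(i\notin\rho\) has \(v_i>\nu_i\). Equivalently, \(\partial^{\bm{\nu}}\prod_{j\in\rho}\partial_j^{b_j}\notin M\) for all \(b_j\ge 0\). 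So the translate \(\bm{\nu}+\Z_{\geq0}^{\rho}\) consists of standard monomials. The standard-pair decomposition of standard monomials (\cite[\S3.2]{SST}) then gives a standard pair \((\partial^{\bm{u}},\sigma)\) with \(\rho\subset\sigma\) and \(u_i=\nu_i=p_i\) for \(i\notin\sigma\). This is the step I expect to need the most care: extracting a maximal pair by enlarging \(\rho\) and lowering \(\bm{\nu}\) on the enlarged index set until condition (iii) holds. It is the standard argument that such a family is contained in a single \(\partial^{\bm{u}}\C[\partial_j : j\in\sigma]\), and I would cite it from \cite{SST}.
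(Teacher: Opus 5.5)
Your proof is correct. The paper gives no argument of its own here; it simply cites \cite[Lemma~4.1.3]{SST}, so the comparison is with the general proof behind that citation. That proof uses two facts. First, the distraction of an arbitrary monomial ideal \(M\) is \emph{equal} to \(\bigcap_{(\partial^{\bm{u}},\sigma)\in\mathcal{S}(M)}\langle\theta_i-u_i\mid i\notin\sigma\rangle\). Second, for every standard pair of \(\mathrm{in}_{\bm{w}}(I_A)\) the set \(\sigma\) is a face of the regular triangulation determined by \(\bm{w}\), so the columns \(\bm{a}_j\) (\(j\in\sigma\)) are linearly independent, which gives finiteness.

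You instead compute only the zero set of \(\widetilde{\mathrm{in}}_{\bm{w}}(I_A)\), proving both inclusions by hand. You get linear independence directly from the already established \(\sigma\subset[n]\) together with \(\bm{a}_j=d\bm{e}_j\), so no triangulation theory is needed. Your route is elementary and self-contained within the paper. It suffices because the claim concerns only the radical: each \(\langle\theta_i-u_i\mid i\notin\sigma\rangle+\langle A\bm{\theta}-\bm{\beta}\rangle\) is generated by affine-linear forms, hence is prime or the unit ideal, and this is what justifies passing from zero sets to the displayed intersection. The SST route works for any \(A\) and generic \(\bm{w}\), and it yields the stronger fact that the distraction itself is radical.

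The step you flagged works as you describe it. Start from \((\partial^{\bm{\nu}},\rho)\). Repeatedly add an index \(\ell\notin\sigma\) and set its exponent to \(0\), whenever the enlarged pair still satisfies (ii). This terminates because \(\sigma\) grows, and it keeps \(u_i=\nu_i\) for \(i\notin\sigma\). At termination, (ii) fails for \((\partial^{\bm{u}}/\partial_\ell^{u_\ell},\sigma\cup\{\ell\})\) for every \(\ell\notin\sigma\), and multiplying the offending monomial by \(\partial_\ell^{u_\ell}\) gives (iii).

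A small point: \(V(I+J)=V(I)\cap V(J)\) needs no Nullstellensatz; only the final passage to radicals uses it.
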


Therefore the zeros of \(\fin_{\bm{w}}(H_A(\bm{\beta}))\) correspond to vectors \(\bm{p} \in \C^m\) such that
\begin{align}
   &A \bm{p} = \bm{\beta}, \label{eq:fake_exponent1}\\
   &p_j = u_j \text{ for all }j \notin \sigma. \label{eq:fake_exponent2}
\end{align}

Given a standard pair \((\partial^{\bm{u}}, \sigma)\), there exists at most one vector \(\bm{p} \in \C^m\) satisfying \eqref{eq:fake_exponent1} and \eqref{eq:fake_exponent2}. If such a vector exists, we say that the standard pair is \emph{associated with} \(\bm{\beta}\) and denote the vector by \(\bm{\beta}^{(\partial^{\bm{u}}, \sigma)}\).

\begin{proposition}\label{prop:w-flat}
The following statements hold.
\begin{enumerate}[(1)]
\item For all sufficiently large \(k\), no embedded standard pair of \(\mathrm{in}_{\bm{w}}(I_A)\) is associated with \(-\bm{\gamma} + kd\1\).

\item For every \(k\), the map
\begin{align*}
\mathcal{T}(\mathrm{in}_{\bm{w}}(I_A)) &\longrightarrow \C^m \\
(\partial^{\bm{u}},[n]) &\longmapsto (-\bm{\gamma} + kd\1)^{(\partial^{\bm{u}},[n])}
\end{align*}
is injective.
\end{enumerate}
\end{proposition}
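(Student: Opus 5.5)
Both parts come down to the explicit shape of the equations \eqref{eq:fake_exponent1} and \eqref{eq:fake_exponent2}. By the preceding proposition every standard pair has \(\sigma\subset[n]\), and the columns \(\bm{a}_1,\ldots,\bm{a}_n\) equal \(d\bm{e}_1,\ldots,d\bm{e}_n\). So for a standard pair \((\partial^{\bm{u}},\sigma)\) the conditions on \(\bm{p}\) become
\[
p_j=u_j\ (j\notin\sigma),\qquad d\,p_i+\sum_{j\notin\sigma}a_{ij}u_j=-\gamma_i+kd\quad(i=1,\ldots,n),
\]
where \(a_{ij}=d\delta_{ij}\) for \(j\le n\).

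For (2), take a top-dimensional pair \((\partial^{\bm{u}},[n])\). Then \(\bm{p}\) is uniquely determined: \(p_j=u_j\) for \(j>n\) and
\[
p_i=\Bigl(-\gamma_i+kd-\sum_{j>n}a_{ij}u_j\Bigr)/d .
\]
Suppose two top-dimensional pairs \((\partial^{\bm{u}},[n])\) and \((\partial^{\bm{u}'},[n])\) give the same \(\bm{p}\). Reading off the coordinates \(j>n\) gives \(u_j=u'_j\) for \(j>n\). Condition (i) of Definition~\ref{def:standard_pair} gives \(u_i=u'_i=0\) for \(i\le n\). Hence \(\bm{u}=\bm{u}'\), and the map is injective for every \(k\).

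For (1), let \((\partial^{\bm{u}},\sigma)\) be an embedded pair, so \(\sigma\subsetneq[n]\); choose \(i_0\in[n]\setminus\sigma\). Condition \eqref{eq:fake_exponent2} forces \(p_{i_0}=u_{i_0}\). Row \(i_0\) of \(A\bm{p}=\bm{\beta}\) then reads
\[
d\,u_{i_0}+\sum_{j>n}a_{i_0j}u_j=-\gamma_{i_0}+kd .
\]
The left-hand side depends only on the standard pair and not on \(k\), while the right-hand side grows linearly in \(k\). Since \(\mathcal{S}(\mathrm{in}_{\bm{w}}(I_A))\) is a finite set (the standard pairs of a monomial ideal are finite, \cite[\S3.2]{SST}), the set of left-hand values is bounded. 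So for all \(k\) larger than the maximum of
\[
\Bigl(d\,u_{i_0}+\sum_{j>n}a_{i_0j}u_j+\gamma_{i_0}\Bigr)/d
\]
over the finitely many embedded pairs, the equation fails and no embedded pair is associated with \(-\bm{\gamma}+kd\1\).

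The main obstacle is just bookkeeping: correctly using that \(\sigma\subset[n]\) and that condition (i) kills \(u_i\) for \(i\in\sigma\). The finiteness of standard pairs is the only external input needed.
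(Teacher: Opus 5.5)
Your proof is correct and follows the paper's argument essentially verbatim. In (1) you pick \(i\in[n]\setminus\sigma\) and note that the \(i\)-th row of \(A\bm{p}=-\bm{\gamma}+kd\1\) equates a \(k\)-independent quantity with \(-\gamma_i+kd\); in (2) you recover \(u_j\) for \(j>n\) from the associated vector and use condition (i) for \(j\le n\), and you additionally make explicit the finiteness of the set of standard pairs, which the paper uses tacitly when choosing \(k\).
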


\begin{proof}
  (1) Let \((\partial^{\bm{u}},\sigma)\) be an embedded standard pair. Since \(\sigma \subsetneq [n]\), we may choose \(i\in[n]\setminus\sigma\). If \((\partial^{\bm{u}},\sigma)\) is associated with \(-\bm{\gamma} + kd\1\), then
  \[
    -\gamma_i + kd = \sum_{j=1}^m a_{ij}u_j.
  \]
  Thus if we choose an integer \(k\) such that \(k > (\gamma_i + \sum_{j=1}^m a_{ij}u_j)/d\) for every embedded standard pair \((\partial^{\bm{u}},\sigma)\) and every \(i\in[n]\setminus\sigma\), it follows that no embedded standard pair is associated with \(-\bm{\gamma} + kd\1\).

  (2) Let
  \((\partial^{\bm{u}},[n]), (\partial^{\bm{v}},[n])\) be two top-dimensional standard pairs, and suppose that
  \[
    (-\bm{\gamma} + kd\1)^{(\partial^{\bm{u}},[n])} = (-\bm{\gamma} + kd\1)^{(\partial^{\bm{v}},[n])}.
  \]

  From Definition \ref{def:standard_pair} and (\ref{eq:fake_exponent2}),  we obtain \(\bm{u} = \bm{v}\).
\end{proof}

\begin{lemma}\label{T=vol}
The map
\begin{align*}
\Phi : \mathcal{T}(\mathrm{in}_{\bm{w}}(I_A)) &\longrightarrow \Z A/d\Z^n\\
(\partial^{\bm{u}}, [n]) &\longmapsto \overline{A\bm{u}}
\end{align*}
is a bijection.
\end{lemma}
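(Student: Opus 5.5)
The plan is to describe the top-dimensional standard pairs completely. Each such pair will turn out to be determined by the "tail" of \(\bm{u}\), and the tails that occur will be exactly the \(\bm{w}\)-minimal representatives of the classes in \(\Z A/d\Z^n\).

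For \(\bm{v}\in\Z_{\geq0}^m\), write \(\bm{v}=(\bm{v}',\bm{v}'')\) with \(\bm{v}'\in\Z_{\geq0}^n\) and \(\bm{v}''\in\Z_{\geq0}^{m-n}\). Let \(A''=(\bm{a}_{n+1}\ \cdots\ \bm{a}_m)\), so that \(A\bm{v}=d\bm{v}'+A''\bm{v}''\). Since \(w_1=\cdots=w_n=0\), we have \(\bm{w}\cdot\bm{v}=\bm{w}''\cdot\bm{v}''\).

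\textbf{Step 1: standard monomials.} By Lemma~\ref{generic_weight_vector}, \(\mathrm{in}_{\bm{w}}(I_A)\) is the monomial ideal generated by the \(\partial^{\bm{u}}\) for which there is some \(\bm{v}\) with \(A\bm{v}=A\bm{u}\) and \(\bm{w}\cdot\bm{v}<\bm{w}\cdot\bm{u}\). Hence \(\partial^{\bm{u}}\notin\mathrm{in}_{\bm{w}}(I_A)\) if and only if \(\bm{u}\) is the (unique) \(\bm{w}\)-minimal element of its fiber \(\{\bm{v}\geq 0: A\bm{v}=A\bm{u}\}\).

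\textbf{Step 2: reduce condition (ii) to a statement about tails.} Let \((\partial^{\bm{u}},[n])\) be a pair with \(\bm{u}'=0\).

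\emph{Claim.} Condition (ii) holds if and only if \(\bm{u}''\) is the unique \(\bm{w}''\)-minimizer among all \(\bm{v}''\in\Z_{\geq0}^{m-n}\) with \(A''\bm{v}''\equiv A''\bm{u}''\bmod d\Z^n\).

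\emph{Proof of "if".} Suppose \(\bm{u}''\) is this minimizer and \(\bm{b}\in\Z^n_{\geq0}\). Any \(\bm{v}\) in the fiber of \((\bm{b},\bm{u}'')\) has \(A''\bm{v}''\equiv A''\bm{u}''\bmod d\). So either \(\bm{v}''=\bm{u}''\), which forces \(\bm{v}'=\bm{b}\) since \(d\bm{v}'=d\bm{b}\), or \(\bm{v}\) has strictly larger weight. By Step 1, \(\partial^{\bm{u}}\partial^{\bm{b}}\notin\mathrm{in}_{\bm{w}}(I_A)\).

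\emph{Proof of "only if".} Suppose some \(\bm{v}''\) in the same class has smaller weight. Write \(A''\bm{v}''-A''\bm{u}''=d\bm{c}\) with \(\bm{c}\in\Z^n\). Choose \(\bm{b}\geq0\) large enough that \(\bm{b}-\bm{c}\geq 0\). Then \((\bm{b}-\bm{c},\bm{v}'')\) lies in the fiber of \((\bm{b},\bm{u}'')\) and has smaller weight. So \(\partial^{\bm{u}}\partial^{\bm{b}}\in\mathrm{in}_{\bm{w}}(I_A)\) and (ii) fails.

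\textbf{Step 3: the remaining conditions.} Condition (i) is exactly \(\bm{u}'=0\). Condition (iii) is automatic for \(\ell>n\), because \(\partial_\ell^d\in\mathrm{in}_{\bm{w}}(I_A)\) as shown in the preceding proposition. Hence \(\mathcal{T}(\mathrm{in}_{\bm{w}}(I_A))\) is in bijection with the set of tails \(\bm{u}''\) that are the \(\bm{w}''\)-minimizers of their class modulo \(d\). Under this bijection \(\Phi\) sends \(\bm{u}''\) to \(\overline{A''\bm{u}''}=\overline{A\bm{u}}\).

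\textbf{Step 4: \(\Phi\) is a bijection.} Since \(\Z A=\Z_{\geq0}A''+d\Z^n\), every class in \(\Z A/d\Z^n\) contains some \(A''\bm{v}''\) with \(\bm{v}''\geq0\). For existence of a minimizer: the weights \(w_j\) are positive, so only finitely many \(\bm{v}''\) have weight below any given bound, and a minimizer exists in each class. For uniqueness: the \(w_j\) are \(\Q\)-linearly independent, so distinct \(\bm{v}''\) have distinct weights and the minimizer is unique. These two facts give surjectivity of \(\Phi\). For injectivity: two distinct top-dimensional pairs with the same image would be two distinct minimizers of one class, which uniqueness rules out.

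The main obstacle is the "only if" direction of Step 2. There one must use the freedom to take \(\bm{b}\) large in order to absorb the possibly negative vector \(\bm{c}\), which is what converts a congruence modulo \(d\) into an actual equality of fibers.
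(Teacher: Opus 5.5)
Your proof is correct and follows essentially the same route as the paper. Surjectivity comes from taking the $\bm{w}$-minimal tail in a class modulo $d\Z^n$. Injectivity comes from shifting by nonnegative vectors supported on $[n]$ (your choice of $\bm{b}$ with $\bm{b}-\bm{c}\geq 0$ is the paper's $\bm{t}=\bm{t}^+-\bm{t}^-$ trick), so that two tails in the same class land in a single fiber, where Lemma~\ref{generic_weight_vector}(1) allows only one standard monomial. Packaging this as an explicit characterization of the top-dimensional standard pairs, and checking that each class contains a nonnegative tail and has a minimizer, is a tidier presentation of the same argument that fills in points the paper leaves implicit.
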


\begin{proof}
Let \((\partial^{\bm{u}}, [n]), (\partial^{\bm{v}}, [n]) \in \mathcal{T}(\mathrm{in}_{\bm{w}}(I_A))\), and suppose that \(\Phi((\partial^{\bm{u}}, [n])) = \Phi((\partial^{\bm{v}}, [n]))\).
Then there exists \(\bm{t} \in \Z^n\) such that \(A(\bm{u}-\bm{v})=d\bm{t}\). 
Write \(\bm{t}\) as
\[
  \bm{t} = \bm{t}^+ - \bm{t}^- \qquad (\bm{t}^+=(t^+_1, \ldots, t^+_n), \bm{t}^- = (t^-_1, \ldots, t^-_n) \in \Z_{\geq 0}^n).
\]
Embed \(\bm{t}^\pm\) into \(\Z_{\geq 0}^m\) by setting \(\hat{\bm{t}}^\pm := (t^\pm_1,\dots,t^\pm_n, 0,\ldots,0)\).
Then
\begin{equation}\label{eq:binomial}
  \partial^{\bm{u} + \hat{\bm{t}}^-} - \partial^{\bm{v} + \hat{\bm{t}}^+} \in I_A.
\end{equation}
since
\[
  A((\bm{u}+\widehat{\bm{t}}^-) - (\bm{v}+\widehat{\bm{t}}^+)) = A(\bm{u} - \bm{v}) - d\bm{t} = 0.
\]
On the other hand, \(\partial^{\bm{u}+\widehat{\bm{t}}^-}\) and \(\partial^{\bm{v}+\widehat{\bm{t}}^+}\) are not contained in \(\mathrm{in}_{\bm{w}}(I_A)\) by the definition of standard pairs.

Suppose that \(\partial^{\bm{u}+\widehat{\bm{t}}^-} \neq \partial^{\bm{v}+\widehat{\bm{t}}^+}\). Then one of the two monomials in (\ref{eq:binomial}) must belong to \(\mathrm{in}_w(I_A)\), contradicting the two non-membership statements above. It follows that \(\partial^{\bm{u}+\widehat{\bm{t}}^-} = \partial^{\bm{v}+\widehat{\bm{t}}^+}\), and therefore \(\bm{u}+\widehat{\bm{t}}^- = \bm{v}+\widehat{\bm{t}}^+\). Since the last \(m-n\) coordinates of \(\widehat{\bm{t}}^\pm\) and the first \(n\) coordinates of \(\bm{u}\) and \(\bm{v}\) are zero, we obtain \(u_j=v_j\) for \(j = 1,\dots,m\). Thus \(\bm{u} = \bm{v}\).

We next prove surjectivity. Fix \({\bm{\beta}} = ({\beta}_1, \ldots, {\beta}_n) \in \Z A\), and choose \(\bm{u} \in \Z_{\geq 0}^m\) in the set
\begin{equation}
  \label{eq:the_set}
  \left\{\bm{u}=(u_1, \ldots, u_m) \in \Z_{\geq 0}^m \mid u_j = 0 \text{ for } j \in [n] \text{ and } A\bm{u} \equiv \bm{\beta} \bmod{d\Z^n}\right\}
\end{equation}
whose \(\bm{w}\)-weight is minimal. We claim that \((\partial^{\bm{u}},[n])\) is a top-dimensional standard pair of \(\mathrm{in}_{\bm{w}}(I_A)\).

For any \(\bm{t} \in \Z_{\geq 0}^{m}\) such that \(t_j = 0\) for every \(j > n\), we have \(\partial^{\bm{u}+\bm{t}} \notin \mathrm{in}_{\bm{w}}(I_A)\).
Indeed, if \(\partial^{\bm{u}+\bm{t}} \in \mathrm{in}_{\bm{w}}(I_A)\), Lemma~\ref{generic_weight_vector}(2) gives \(\bm{s} = (s_1, \ldots, s_m) \in \Z_{\geq 0}^m\) such that
\[
  \partial^{\bm{u}+\bm{t}}-\partial^{\bm{s}} \in I_A,
  \qquad
  \mathrm{in}_{\bm{w}}(\partial^{\bm{u}+\bm{t}}-\partial^{\bm{s}})
  = \partial^{\bm{u}+\bm{t}}.
\]
Then
\[
  A\bm{s}=A(\bm{u}+\bm{t}) \equiv A\bm{u} \pmod{d\Z^n}, \quad \bm{w}\cdot\bm{s} < \bm{w}\cdot(\bm{u}+\bm{t}) = \bm{w}\cdot\bm{u}.
\]
If we set
\(\hat{\bm{s}} = (0, \ldots, 0, s_{n+1}, \ldots, s_m)\) then \(\hat{\bm{s}}\) belongs to the set (\ref{eq:the_set}) and \(\bm{w}\cdot\hat{\bm{s}} = \bm{w}\cdot{\bm{s}} < \bm{w}\cdot\bm{u}\), contradicting the choice of \(\bm{u}\).

Finally, for every \(j>n\), we have \(\partial_j^d\in \mathrm{in}_{\bm{w}}(I_A)\), and hence \(\partial^{\bm{u}}\partial_j^d\in \mathrm{in}_{\bm{w}}(I_A)\).
Thus \((\partial^{\bm{u}},[n])\) is a top-dimensional standard pair and \(\Phi(\partial^{\bm{u}},[n]) = \overline{\bm{\beta}}\).
\end{proof}

Let \(d\mu\) be the Lebesgue measure on \(\R^n\) normalized so that the standard simplex has volume one. We define the normalized volume of \(A\) by
\[
  \operatorname{vol}(A) := \frac{1}{[\Z^n:\Z A]} \int_{\operatorname{conv}(0,\bm{a}_1,\ldots,\bm{a}_m)}d\mu,
\]
where \(\operatorname{conv}(0,\bm{a}_1,\ldots,\bm{a}_m))\) denotes the convex hull of \(\bm{a}_1,\ldots,\bm{a}_m\) and the origin \(0\), and \([\Z^n:\Z A]\) is the index of \(\Z A\) in \(\Z^n\).
For the matrix \(A\) in \eqref{eq:matrix}, this convex hull is the simplex \(\operatorname{conv}(0, d\bm{e}_1, \ldots, d\bm{e}_n)\).
This simplex has volume \(d^n\) with respect to the measure \(d\mu\). Thus we obtain
\begin{equation}\label{eq:volume-index}
  \#(\Z A/d\Z^n) =[\Z A:d\Z^n] = \frac{d^n}{[\Z^n:\Z A]} = \operatorname{vol}(A).
\end{equation}

\begin{proposition}\label{prop:w-flat-consequences}
For all sufficiently large integers \(k\), the following statements hold.
\begin{enumerate}[(1)]
\item We have
\[
  \rank M_A(-\bm{\gamma}+kd\1) = \dim_\C \C[\theta_1, \ldots, \theta_m]/\fin_{\bm{w}}(H_A(-\bm{\gamma}+kd\1)) = \mathrm{vol}(A).
\]
\item The ideal \(\fin_{\bm{w}}(H_A(-\bm{\gamma}+kd\1))\) is radical, and the evaluation map
\begin{align*}
  \C[\theta_1, \ldots, \theta_m]/\fin_{\bm{w}}(H_A(-\bm{\gamma}+kd\1))
  &\longrightarrow \prod_{(\partial^{\bm{u}},[n])\in\mathcal{T}(\mathrm{in}_{\bm{w}}(I_A))}\C\\
  f &\longmapsto
  \bigl(f((-\bm{\gamma}+kd\1)^{(\partial^{\bm{u}},[n])})\bigr)_{(\partial^{\bm{u}},[n])\in\mathcal{T}(\mathrm{in}_{\bm{w}}(I_A))}
\end{align*}
is an isomorphism of \(\C\)-algebras.
\end{enumerate}
\end{proposition}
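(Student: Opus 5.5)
By Proposition~\ref{prop:w-flat}(1) we fix \(k\) so large that no embedded standard pair of \(\mathrm{in}_{\bm{w}}(I_A)\) is associated with \(\bm{\beta}_k:=-\bm{\gamma}+kd\1\). By the radical description of \(\fin_{\bm{w}}(H_A(\bm{\beta}_k))\) (\cite[Lemma~4.1.3]{SST}), the zero set of the fake indicial ideal is then exactly the set of points \(\bm{\beta}_k^{(\partial^{\bm{u}},[n])}\) for \((\partial^{\bm{u}},[n])\in\mathcal{T}(\mathrm{in}_{\bm{w}}(I_A))\). Every top-dimensional pair is associated: for \(\sigma=[n]\) and \(\bm{a}_i=d\bm{e}_i\), the conditions \eqref{eq:fake_exponent1}–\eqref{eq:fake_exponent2} are solved uniquely by \(p_j=u_j\) for \(j>n\) and \(p_i=(\beta_{k,i}-\sum_{j>n}a_{ij}u_j)/d\). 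By Proposition~\ref{prop:w-flat}(2) these points are pairwise distinct. Lemma~\ref{T=vol} together with \eqref{eq:volume-index} shows there are exactly \(\mathrm{vol}(A)\) of them.

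For (2), the key step is to show the fake indicial ideal is radical, i.e.\ that its localization at each point \(\bm{p}=\bm{\beta}_k^{(\partial^{\bm{u}},[n])}\) is the maximal ideal. Near \(\bm{p}\) we want generators of the form \(\theta_j-u_j\) for \(j>n\). For this we use that \((\partial^{\bm{u}},[n])\) is standard, so \(\partial^{\bm{u}}\partial_j\cdot(\text{monomial in }\partial_1,\dots,\partial_n)\in\mathrm{in}_{\bm{w}}(I_A)\) for some such monomial; concretely \(\partial^{\bm{u}+\bm{e}_j}\partial^{\bm{b}}\), with \(\bm{b}\) supported on \([n]\). The distraction \([\theta]_{\bm{u}+\bm{e}_j+\bm{b}}\) factors as
\[
\prod_{\ell}(\theta_\ell-\ell')\cdots(\theta_j-u_j)\prod_{i\le n}[\theta_i]_{b_i}.
\]
All factors other than \(\theta_j-u_j\) are units at \(\bm{p}\) (for \(i\le n\) because \(p_i\) is not a small nonnegative integer). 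We must check this last claim: \(p_i=(kd-\gamma_i-\sum_{j>n}a_{ij}u_j)/d\) is large for \(k\gg0\), so it is nonzero but possibly a nonnegative integer. I expect this to be the main obstacle, and I would handle it by minimality: choose the monomial in \(\mathrm{in}_{\bm{w}}(I_A)\) with \(\bm{b}\) minimal. Alternatively, for \(k\gg0\) the \(p_i\) exceed every \(b_i\) occurring among the finitely many generators of the monomial ideal, so \(\theta_i-\ell\) with \(\ell<b_i\le p_i-1\) is a unit. Either way, the local ring at \(\bm{p}\) is cut out by the \(\theta_j-u_j\) for \(j>n\) together with the \(n\) linear forms \(A\bm{\theta}-\bm{\beta}_k\). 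These forms are \(m\) independent linear forms, so the localization is \(\C\). Hence the ideal is radical, and by the Chinese remainder theorem the evaluation map is an isomorphism onto \(\prod\C\). This also gives \(\dim_\C\C[\theta]/\fin_{\bm{w}}=\#\mathcal{T}=\mathrm{vol}(A)\).

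Finally, for the first equality in (1), we invoke \cite[\S4.6]{SST}: for generic \(\bm{w}\), one has \(\rank M_A(\bm{\beta})=\dim_\C \C[\theta]/\fin_{\bm{w}}(H_A(\bm{\beta}))\) whenever no embedded standard pair is associated with \(\bm{\beta}\), or alternatively whenever \(\bm{\beta}\) is non-rank-jumping. For the latter route, we use that \(A\) is homogeneous, so \(M_A(\bm{\beta})\) is regular holonomic of rank \(\mathrm{vol}(A)\) for \(\bm{\beta}\) outside the exceptional set. That set is avoided for \(k\gg0\): in the normalized coordinates, \(T\bm{\beta}_k\) lies deep in the cone, and Proposition~\ref{thm:GKZ=FT+} with Lemma~\ref{lemma:non-resonance} gives the Fourier transform of a direct image, whose rank is the volume. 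Either route yields \(\rank=\mathrm{vol}(A)\) and completes (1).
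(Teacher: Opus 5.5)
Your argument is correct in substance, but for (2) it runs in the opposite direction from the paper. The paper gets everything from \cite{SST}. Both parts of Proposition~\ref{prop:w-flat}, together with \cite[Proposition~4.6.6]{SST}, make $-\bm\gamma+kd\1$ $\bm w$-flat. Then \cite[Theorem~4.6.1, Remark~4.6.2]{SST} give $\rank M_A(-\bm\gamma+kd\1)=\dim_\C\C[\theta_1,\ldots,\theta_m]/\fin_{\bm w}(H_A(-\bm\gamma+kd\1))=\mathrm{vol}(A)$. Radicality is then deduced by comparing this dimension with the number $\mathrm{vol}(A)$ of distinct zeros.

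You instead show that the fake indicial ideal, localized at each fake exponent $\bm p$, is the maximal ideal. You do this by exhibiting $(\theta_j-u_j)\cdot h$ with $h(\bm p)\neq0$ in $\widetilde{\mathrm{in}}_{\bm w}(I_A)$ for every $j>n$. Radicality, the evaluation isomorphism and $\dim=\mathrm{vol}(A)$ then follow by elementary commutative algebra. The only $\D$-module input left is $\rank M_A(-\bm\gamma+kd\1)=\mathrm{vol}(A)$. This makes the algebraic half of the statement independent of the flatness machinery, at the price of one more largeness condition on $k$.

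For that remaining equality, your first route is the paper's citation. State its hypotheses as the paper does, feeding in both parts of Proposition~\ref{prop:w-flat}, not only the absence of associated embedded pairs. Your second route is fine, but it needs a reference for the fact that $\FT(\phi^{\widetilde A}_+\mathcal K)$ has rank $\mathrm{vol}(A)$, which the paper does not supply.

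One step needs a small correction. Condition (iii) of Definition~\ref{def:standard_pair} does not give a witness with $\partial_j$-exponent $1$. For $n=d=2$, $\bm a_3=(1,1)^T$ one has $\mathrm{in}_{\bm w}(I_A)=\langle\partial_3^2\rangle$, with standard pairs $(1,[2])$ and $(\partial_3,[2])$. For $(1,[2])$ and $j=3$, no monomial $\partial_3\partial_1^{b_1}\partial_2^{b_2}$ lies in the ideal.

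What (ii) and (iii) actually give is $\partial^{\bm u}\partial_j^{c}\partial^{\bm b}\in\mathrm{in}_{\bm w}(I_A)$ with $c\geq1$ and $\bm b$ supported on $[n]$. Your argument is unaffected: $[\theta_j]_{u_j+c}$ contains the factor $\theta_j-u_j$, and its remaining factors $\theta_j-l$ with $l\neq u_j$ do not vanish at $p_j=u_j$. Likewise $[\theta_\ell]_{u_\ell}(\bm p)=u_\ell!\neq0$ for $\ell>n$, $\ell\neq j$.

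Also discard the ``minimality'' option and keep your alternative. Fix one witness for each of the finitely many pairs $\bigl((\partial^{\bm u},[n]),j\bigr)$; these do not depend on $k$. Then take $k$ so large that $p_i=k-(\gamma_i+(A\bm u)_i)/d>b_i-1$ for all of them, so that every factor $[\theta_i]_{b_i}$ is a unit at $\bm p$.
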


\begin{proof}
By Proposition~\ref{prop:w-flat} and \cite[Proposition~4.6.6]{SST}, the parameter \(-\bm{\gamma}+kd\1\) is \(\bm{w}\)-flat for all sufficiently large \(k\). (1) follows from \cite[Theorem~4.6.1, Remark~4.6.2]{SST}.

By Proposition~\ref{prop:w-flat}, the zero set of \(\fin_{\bm{w}}(H_A(-\bm{\gamma}+kd\1))\) consists precisely of the distinct points
\[
  (-\bm{\gamma}+kd\1)^{(\partial^{\bm{u}},[n])},\quad (\partial^{\bm{u}},[n])\in\mathcal{T}(\mathrm{in}_{\bm{w}}(I_A)).
\]
By Lemma~\ref{T=vol} and \eqref{eq:volume-index}, their number is \(\#(\Z A/d\Z^n)=\mathrm{vol}(A)\). Since the quotient algebra \(\C[\theta_1, \ldots, \theta_m]/\fin_{\bm{w}}(H_A(-\bm{\gamma}+kd\1))\) has dimension \(\mathrm{vol}(A)\) by (1), the ideal is radical and the evaluation map is an isomorphism.
\end{proof}

\subsection{Proof of the inequality (\ref{eq:main_inequality}) }
For a left ideal \(I\) of \(D_m\) such that \(D_m/I\) is holonomic, we denote the holonomic rank of \(D_m/I\) by \(\mathrm{rank}(I)\).

\begin{proposition}
Let \(\bm{w} \in \R^m\) be such that \(\operatorname{in}_{\bm{w}}(I_A)\) is a monomial ideal. Then, for every
\(\bm{\beta} \in \C^n\) and every \(\bm{u} \in \Z_{\geq 0}^m\),
\[
  \operatorname{rank} (H_A(\bm{\beta})+D_m\partial^{\bm{u}}) \leq \dim_{\C}\C[\theta_1, \ldots, \theta_m]/(\fin_{\bm{w}}(H_A(\bm{\beta})) + \langle[\theta]_{\bm{u}}\rangle).
\]
\end{proposition}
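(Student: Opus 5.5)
We follow the Gröbner-deformation argument behind \cite[Theorem~3.1.3, Theorem~4.1.3]{SST}. Set $J:=H_A(\bm{\beta})+D_m\partial^{\bm{u}}$ and let $\bm{w}$ be as in the statement. The first step is the semicontinuity of holonomic rank under weight deformation: for a left ideal $I\subset D_m$ with $D_m/I$ holonomic,
\[
  \operatorname{rank}(I)\leq \operatorname{rank}(\mathrm{in}_{(-\bm{w},\bm{w})}(I)),
\]
whenever the right side is finite, by \cite[Theorem~2.2.1]{SST} applied to the Gröbner deformation. It therefore suffices to bound the rank of $\mathrm{in}_{(-\bm{w},\bm{w})}(J)$.

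Next we describe elements of $\mathrm{in}_{(-\bm{w},\bm{w})}(J)$. The Euler operators $E_i-\beta_i$ are $(-\bm{w},\bm{w})$-homogeneous of weight $0$, so they are their own initial forms. For $f\in I_A$, $\mathrm{in}_{(-\bm{w},\bm{w})}(f)=\mathrm{in}_{\bm{w}}(f)$, and $\partial^{\bm{u}}$ is homogeneous. Hence
\[
  \mathrm{in}_{(-\bm{w},\bm{w})}(J)\supset D_m\,\mathrm{in}_{\bm{w}}(I_A)+\sum_i D_m(E_i-\beta_i)+D_m\partial^{\bm{u}}=:J'.
\]
Since a larger ideal gives smaller rank, it is enough to bound $\operatorname{rank}(J')$.

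The core step is to pass to the torus-fixed (distraction) picture. Each monomial generator $\partial^{\bm{v}}$ of $\mathrm{in}_{\bm{w}}(I_A)$, and also $\partial^{\bm{u}}$, satisfies $\lambda^{\bm{v}}\partial^{\bm{v}}=[\theta]_{\bm{v}}$. Thus $D_m J'$ contains the commutative ideal $\fin_{\bm{w}}(H_A(\bm{\beta}))+\langle[\theta]_{\bm{u}}\rangle$ of $\C[\theta_1,\ldots,\theta_m]$. We then use the standard fact \cite[Theorem~2.3.3 / Proposition~2.3.6]{SST}: if a left ideal $L\subset D_m$ contains $D_m\cdot K$ for an ideal $K\subset\C[\theta]$ with $\C[\theta]/K$ Artinian, then
\[
  \operatorname{rank}(L)\le \operatorname{rank}(D_mK)=\dim_\C \C[\theta]/K.
\]
The equality holds because $D_m K$ is the system of Euler-type equations, whose solutions on the torus are spanned by $\lambda^{\bm{p}}\log$-terms indexed by the Artinian quotient. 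The quotient is Artinian here because it is a quotient of $\C[\theta]/\fin_{\bm{w}}(H_A(\bm{\beta}))$, which is Artinian by \cite[Lemma~4.1.3]{SST}. This yields the claimed inequality.

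The main obstacle is the first step. The rank inequality under initial ideals needs care, since $(-\bm{w},\bm{w})$ is not a positive weight and the initial module might not be holonomic. Following \cite[Theorem~2.5.1]{SST} (regular-holonomic/torus-action setting), rank of $D_m/I$ equals $\dim$ of $\C(\lambda)\otimes$ and is upper semicontinuous in the flat family $\{t\}$ connecting $I$ to its initial ideal. We would cite \cite[Theorem~2.2.1]{SST}, which asserts exactly $\operatorname{rank}(\mathrm{in}_{(-\bm{w},\bm{w})}(I))\ge\operatorname{rank}(I)$ for holonomic $D_m/I$. Holonomicity of $D_m/J$ follows from that of $M_A(\bm{\beta})$.
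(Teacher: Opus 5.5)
The gap is in your first step. As you state it, the inequality $\rank(I)\le\rank(\mathrm{in}_{(-\bm w,\bm w)}(I))$ for every holonomic $I$ is false. Take one variable and $I=D_1(\lambda_1^2\partial_1-1)$. This ideal is holonomic of rank $1$, with solution $e^{-1/\lambda_1}$. For $w=1$ the term $\lambda_1^2\partial_1$ has weight $-2+1=-1$, while the constant term has weight $0$. So $\mathrm{in}_{(-1,1)}(I)$ contains the unit $-1$, equals $D_1$, and has rank $0$.

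Gröbner deformation can lose rank at irregular singularities. No result about arbitrary holonomic ideals can therefore give the upper bound you need, and checking only that $D_m/J$ is holonomic does not suffice. Your mention of \cite[Theorem~2.5.1]{SST} points in the right direction, but you never verify its hypothesis.

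The missing ingredient is regularity. $D_m/J$ is a quotient of $M_A(\bm\beta)$, which is regular holonomic by Proposition~\ref{prop:regularity}. That is Hotta's theorem, applicable because $(1,\ldots,1)$ lies in the row span of $A$. Quotients of regular holonomic modules are again regular holonomic. Then \cite[Theorem~2.5.1]{SST} gives the equality $\rank(J)=\rank(\mathrm{in}_{(-\bm w,\bm w)}(J))$, which is exactly how the paper's proof begins.

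With that substitution, the rest of your argument goes through and matches the paper. The paper uses the same inclusion $D_m\,\mathrm{in}_{\bm w}(I_A)+D_m\langle A\bm\theta-\bm\beta\rangle+D_m\partial^{\bm u}\subseteq\mathrm{in}_{(-\bm w,\bm w)}(J)$. It then computes ranks over $R=\C(\lambda_1,\ldots,\lambda_m)\otimes D_m$, where $R\,\partial^{\bm v}=R\,[\theta]_{\bm v}$ and $R\cong\C(\lambda_1,\ldots,\lambda_m)\otimes_\C\C[\theta_1,\ldots,\theta_m]$. This is also the clean justification of your equality $\rank(D_mK)=\dim_\C\C[\theta_1,\ldots,\theta_m]/K$; you do not need the heuristic about logarithmic solutions.
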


\begin{proof}
  Proposition \ref{prop:regularity} and \cite[Theorem~2.5.1]{SST} give
  \[
    \operatorname{rank}(H_A(\bm{\beta})+D_m\partial^{\bm{u}}) = \operatorname{rank}(\operatorname{in}_{(-\bm{w}, \bm{w})}(H_A(\bm{\beta})+D_m\partial^{\bm{u}})).
  \]

  For arbitrary \(\bm{\beta}\), \(D_m \operatorname{in}_{\bm{w}}(I_A)+D_m\langle A\bm{\theta} - \bm{\beta}\rangle \subset \operatorname{in}_{(-\bm{w}, \bm{w})}(H_A(\bm{\beta}))\). It follows that
  \[
    (D_m \operatorname{in}_{\bm{w}}(I_A) + D_m\langle A\bm{\theta}-\bm{\beta}\rangle + D_m\partial^{\bm{u}})\subseteq \operatorname{in}_{(-\bm{w},\bm{w})}(H_A(\bm{\beta})+D_m\partial^{\bm{u}}).
  \]

  Let
  \(R = \C(\lambda_1, \ldots, \lambda_m) \otimes_{\C[\lambda_1, \ldots, \lambda_m]} D_m\). Since \(\operatorname{in}_{\bm{w}}(I_A)\) is a monomial ideal, the identity \([\theta]_{\bm{v}}=\lambda^{\bm{v}}\partial^{\bm{v}}\) and the invertibility of \(\lambda^{\bm{v}}\) in \(R\) give \(R\operatorname{in}_{\bm{w}}(I_A)=R\widetilde{\operatorname{in}}_{\bm{w}}(I_A)\).
  From the preceding inclusion, using
  \(R\langle \partial^{\bm{u}} \rangle = R\langle \lambda^{\bm{u}}\partial^{\bm{u}} \rangle = R\langle [\theta]_{\bm{u}} \rangle\), we obtain
  \begin{align*}
    &\dim_{\C(\lambda_1, \ldots, \lambda_m)} R/R\left(\fin_{\bm{w}}(H_A(\bm{\beta})) + \langle[\theta]_{\bm{u}}\rangle \right)\\
    &= \dim_{\C(\lambda_1, \ldots, \lambda_m)}R/R(D_m \operatorname{in}_{\bm{w}}(I_A) + D_m\langle A\bm{\theta} - \bm{\beta}\rangle + D_m\partial^{\bm{u}})\\
    &\geq \dim_{\C(\lambda_1, \ldots, \lambda_m)}R/R\operatorname{in}_{(-\bm{w},\bm{w})}(H_A(\bm{\beta})+D_m\partial^{\bm{u}})\\
    &= \operatorname{rank}(H_A(\bm{\beta})+D_m\partial^{\bm{u}}).
  \end{align*}
  As a left \(\C(\lambda_1, \ldots, \lambda_m)\)-vector space , \(R \cong \C(\lambda_1, \ldots, \lambda_m)\otimes_{\C}\C[\theta_1, \ldots, \theta_m]\). Hence
  \begin{align*}
    \dim_{\C}\C[\theta_1, \ldots, \theta_m]/(\fin_{\bm{w}}(H_A(\bm{\beta})) + \langle[\theta]_{\bm{u}}\rangle) = \dim_{\C(\lambda_1, \ldots, \lambda_m)} R/R\left(\fin_{\bm{w}}(H_A(\bm{\beta})) + \langle[\theta]_{\bm{u}}\rangle \right).
  \end{align*}
  This proves the assertion.
\end{proof}

\begin{proposition}\label{prop:inequality}
  For a sufficiently large integer \(k\), set
  \(\bm{u}_k = (\underbrace{2k, \ldots, 2k}_{n}, 0, \ldots, 0)\in\Z_{\geq 0}^m\).
  Then
  \[
    \rank (H_A(-\bm{\gamma} + kd\1)+D_m\partial^{\bm{u}_k}) \leq \rank M_A(-\bm{\gamma} + kd\1) - \rank H^{n}_{\mathrm{Dw}}(Y/S)_{\overline{\bm{\gamma}}}.
  \]
\end{proposition}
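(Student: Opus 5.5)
The plan is to apply the preceding proposition with \(\bm{\beta}=-\bm{\gamma}+kd\1\) and \(\bm{u}=\bm{u}_k\), and then to compute the resulting Artinian dimension exactly, using the radicality in Proposition~\ref{prop:w-flat-consequences}. That proposition gives
\[
  \rank(H_A(-\bm{\gamma}+kd\1)+D_m\partial^{\bm{u}_k}) \leq \dim_\C \C[\theta_1,\ldots,\theta_m]/\bigl(\fin_{\bm{w}}(H_A(-\bm{\gamma}+kd\1))+\langle[\theta]_{\bm{u}_k}\rangle\bigr).
\]
By Proposition~\ref{prop:w-flat-consequences}(2), for \(k\gg0\) the quotient by \(\fin_{\bm{w}}\) is the algebra of functions on the finite reduced set of points \(\bm{p}^{(\bm{u})}:=(-\bm{\gamma}+kd\1)^{(\partial^{\bm{u}},[n])}\), one for each top-dimensional standard pair. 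Adding the principal ideal \(\langle[\theta]_{\bm{u}_k}\rangle\) cuts this down to the functions on the subset of points where \([\theta]_{\bm{u}_k}\) vanishes. Hence the right-hand side equals \(\mathrm{vol}(A)\) minus the number of top-dimensional standard pairs with \([\theta]_{\bm{u}_k}(\bm{p}^{(\bm{u})})\neq0\).

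Next I compute these points explicitly. For a pair \((\partial^{\bm{u}},[n])\) we have \(u_j=0\) for \(j\le n\). Equation \eqref{eq:fake_exponent2} gives \(p_j=u_j\) for \(j>n\). Equation \eqref{eq:fake_exponent1} gives \(dp_i+(A\bm{u})_i=-\gamma_i+kd\), that is,
\[
  p_i = k-\frac{\gamma_i+(A\bm{u})_i}{d}, \qquad i=1,\ldots,n.
\]
Since \([\theta]_{\bm{u}_k}=\prod_{i=1}^n[\theta_i]_{2k}\), it vanishes at \(\bm{p}^{(\bm{u})}\) if and only if some \(p_i\in\{0,1,\ldots,2k-1\}\). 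There are finitely many top-dimensional standard pairs, so \(\gamma_i+(A\bm{u})_i\) is bounded independently of \(k\). Therefore, for \(k\) large, every \(p_i\) lies strictly inside the real interval \((0,2k-1)\). Thus \(p_i\in\{0,\ldots,2k-1\}\) holds exactly when \(p_i\in\Z\), i.e. when \(\gamma_i+(A\bm{u})_i\equiv0\bmod d\).

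It follows that \([\theta]_{\bm{u}_k}(\bm{p}^{(\bm{u})})\neq0\) if and only if \((A\bm{u})_i+\gamma_i\not\equiv0\bmod d\) for all \(i\). This is exactly the condition that \(\Phi(\partial^{\bm{u}},[n])=\overline{A\bm{u}}\in\Z A/d\Z^n\) lies in \(\mathfrak{I}_{\bm{\gamma}}\) from \eqref{eq:totally_nonzero_set}; the condition is well defined modulo \(d\). By the bijection of Lemma~\ref{T=vol}, the number of such pairs is \(\#\mathfrak{I}_{\bm{\gamma}}\), which equals \(\rank H^n_{\mathrm{Dw}}(Y/S)_{\overline{\bm{\gamma}}}\) by the rank lemma in \S\ref{sec:fermat-de-rham}. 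So the dimension is \(\mathrm{vol}(A)-\rank H^n_{\mathrm{Dw}}(Y/S)_{\overline{\bm{\gamma}}}\). Combined with \(\rank M_A(-\bm{\gamma}+kd\1)=\mathrm{vol}(A)\) from Proposition~\ref{prop:w-flat-consequences}(1), this gives the claim.

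The main point needing care is the reduction from the Artinian quotient to counting points. This uses radicality of \(\fin_{\bm{w}}\) for large \(k\), which is already available, together with the elementary fact that for a reduced finite algebra \(\prod\C\), quotienting by a principal ideal keeps exactly the factors where the generator vanishes. The other delicate check is choosing \(k\) large enough simultaneously for Proposition~\ref{prop:w-flat-consequences} and for the interval bound \(0<p_i<2k-1\) over all top-dimensional standard pairs; this is possible because there are finitely many standard pairs.
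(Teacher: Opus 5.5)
Your proposal is correct and follows essentially the same route as the paper. You apply the preceding rank bound with $\bm{\beta}=-\bm{\gamma}+kd\1$ and $\bm{u}=\bm{u}_k$, use the evaluation isomorphism of Proposition~\ref{prop:w-flat-consequences}(2) to count the top-dimensional standard pairs at which $[\theta]_{\bm{u}_k}$ vanishes, and identify these via Lemma~\ref{T=vol} with the complement of $\mathfrak{I}_{\bm{\gamma}}$. Your explicit bound $0<p_i<2k-1$ makes precise the paper's ``increasing $k$ further if necessary'' step, although for the inequality alone the containment of the vanishing set in the complement of $\mathfrak{I}_{\bm{\gamma}}$ already suffices.
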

\begin{proof}
  Let \(\bm{w}\) be a vector as in Lemma~\ref{generic_weight_vector} and take \(k\) sufficiently large so that Proposition~\ref{prop:w-flat-consequences} applies. Under the evaluation isomorphism in part~(2) of that proposition, \([\theta]_{\bm{u}_k}\) is sent to
  \begin{align*}
   \left(\prod_{i=1}^n\prod_{l=0}^{2k-1} \left(\frac{(-\bm{\gamma} - A\bm{u})_i + d(k-l)}{d}\right)\right)_{(\partial^{\bm{u}}, [n]) \in \mathcal{T}(\mathrm{in}_{\bm{w}}(I_A))}.
  \end{align*}
  where \((-\bm{\gamma} - A\bm{u})_i\) denotes the \(i\)-th component of \(-\bm{\gamma} - A\bm{u}\).
  Therefore, the dimension of \(\C[\theta_1, \ldots, \theta_m]/(\fin_{\bm{w}}(H_A(-\bm{\gamma} + kd\1))+\langle [\theta]_{\bm u_k} \rangle)\) as a \(\C\)-vector space is the cardinality of
  \[
    \{(\partial^{\bm{u}}, [n]) \in \mathcal{T}(\mathrm{in}_{\bm{w}}(I_A)) \mid  (\bm{\gamma} + A\bm{u})_i - d(k-l) = 0 \text{ for some } 1 \leq i \leq n, 0 \leq l \leq 2k-1\}.
  \]
  Its image under the bijection \(\mathcal{T}(\mathrm{in}_{\bm{w}}(I_A)) \rightarrow \Z A/d\Z^n\) of Lemma~\ref{T=vol} is contained in
  \begin{equation}
    \label{eq:the_set2}
    \{\overline{\bm{\beta}} \in \Z A/d\Z^n \mid \gamma_i + \beta_i \equiv 0 \bmod d \text{ for some } i = 1, \ldots, n\},
  \end{equation}
  and, by increasing \(k\) further if necessary, the image equals the set in \eqref{eq:the_set2}.
  This is the complement of the set (\ref{eq:totally_nonzero_set}) in \(\Z A/d\Z^n\). Hence
  \begin{align*}
    \rank \bigl(H_A(-\bm{\gamma} + kd\1)+D_m\partial^{\bm{u}_k}\bigr)
    &\leq \dim_\C \C[\theta_1, \ldots, \theta_m]/(\fin_{\bm{w}}(H_A(-\bm{\gamma} + kd\1))+\langle [\theta]_{\bm{u}_k} \rangle)\\
    &= \#\Z A/d\Z^n - \text{The number of elements of the set (\ref{eq:totally_nonzero_set})}\\
    &= \mathrm{vol}(A) - \rank H^{n}_{\mathrm{Dw}}(Y/S)_{\overline{\bm{\gamma}}}\\
    &= \rank M_A(-\bm{\gamma}+kd\1) - \rank H^{n}_{\mathrm{Dw}}(Y/S)_{\overline{\bm{\gamma}}},
  \end{align*}
  where the last equality follows from Proposition~\ref{prop:w-flat-consequences} (1).
\end{proof}

\section{Specialization to One-Parameter Families}
\subsection{The Generalized Dwork Family}\label{sec:generalized-dwork-family}
  Let \(d \geq n \geq 2\) and let \(\bm{a} =
  \begin{pmatrix}
    a_1 & \cdots & a_n
  \end{pmatrix}
  \) be a non-negative integer vector such that \(a_1 + \cdots + a_n = d\). Set
  \(I:=\{i\in\{1,\ldots,n\}\mid a_i\neq0\}\), and put
  \[
    g := \gcd(a_1,\ldots,a_n), \quad d' := d/g, \quad a_i' := a_i/g,\quad C_{\bm{a}}:=\frac{(-d)^{d'}}{\prod_{i \in I} a_i^{a_i'}}.
  \]

  We study the Gauss--Manin connection of the one-parameter family of projective hypersurfaces defined by the homogeneous equation
  \[X_1^d + \cdots + X_n^d + \lambda X_1^{a_1}\cdots X_n^{a_n}=0.\]
We refer to this family as the generalized Dwork family.
  \begin{proposition}
The projective hypersurface defined by the equation
\[
  X_1^d+\cdots+X_n^d+\lambda X_1^{a_1}\cdots X_n^{a_n} = 0 \quad (\lambda \in \C)
\]
is nonsingular if and only if
\[
\lambda^{d'}\neq C_{\bm{a}}.
\]
\end{proposition}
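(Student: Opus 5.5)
We argue directly with the Jacobian criterion. Put \(F=X_1^d+\cdots+X_n^d+\lambda X^{\bm a}\). Since \(F\) is homogeneous of degree \(d\), Euler's identity gives \(dF=\sum_i X_i\,\partial F/\partial X_i\). Hence a singular point of the hypersurface is exactly a point \([X]\in\mathbb P^{n-1}\) at which all the partial derivatives vanish:
\[
  dX_i^{d-1}+\lambda a_i X^{\bm a}/X_i = 0\qquad(i=1,\ldots,n),
\]
where the term \(X^{\bm a}/X_i\) is understood as \(\partial X^{\bm a}/\partial X_i\) divided by \(a_i\) (and it is absent when \(a_i=0\)). Multiplying the \(i\)-th equation by \(X_i\) gives the cleaner system
\begin{equation*}
  dX_i^{d} = -\lambda a_i X^{\bm a}\qquad(i=1,\ldots,n).
\end{equation*}

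First we treat \(\lambda=0\) and the coordinates \(i\notin I\). If \(\lambda=0\), the hypersurface is the Fermat hypersurface, which is smooth. Note that \(C_{\bm a}\neq0\), so the condition \(\lambda^{d'}\neq C_{\bm a}\) holds as well. For \(i\notin I\), the original equation reads \(dX_i^{d-1}=0\), so \(X_i=0\).

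Next we show that every coordinate with \(i\in I\) is nonzero at a singular point. Suppose \(X_{i_0}=0\) for some \(i_0\in I\). Then \(X^{\bm a}=0\), so \(X_i^d=0\) for all \(i\), which is impossible in projective space. Hence at a singular point \(X_i\neq0\) exactly for \(i\in I\). In particular \(X^{\bm a}\neq0\) and \(\lambda\neq0\).

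Now set \(P=X^{\bm a}\). Then \(X_i^d=-\lambda a_iP/d\) for \(i\in I\). Raise both sides to the power \(a_i'\) and take the product over \(i\in I\). The left side becomes
\[
  \prod_{i\in I}X_i^{d a_i'}=\bigl(X^{\bm a}\bigr)^{d/g}=P^{d'},
\]
and the right side becomes \(\prod_{i\in I}(-\lambda a_i P/d)^{a_i'}=(-\lambda/d)^{d'}P^{d'}\prod_{i\in I} a_i^{a_i'}\), using \(\sum a_i'=d'\). Cancelling \(P^{d'}\neq0\) gives
\[
  \lambda^{d'}=(-d)^{d'}/\prod_{i\in I} a_i^{a_i'}=C_{\bm a}.
\]
So the hypersurface is nonsingular whenever \(\lambda^{d'}\neq C_{\bm a}\).

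For the converse, given \(\lambda\) with \(\lambda^{d'}=C_{\bm a}\), we must construct a singular point. We look for a point with \(X_i=0\) for \(i\notin I\) and \(X_i=x_i\neq0\) for \(i\in I\), satisfying \(x_i^d=-\lambda a_i P/d\) with \(P=\prod_{i\in I} x_i^{a_i}\). Here is the plan:
\begin{enumerate}[(1)]
  \item Fix \(c\in\C^\times\) to be determined, and choose \(d\)-th roots \(x_i\) with \(x_i^d=-\lambda a_i c/d\).
  \item By the computation above, \(P^{d'}\) is then automatically equal to \(c^{d'}\), since the right-hand sides multiply as before and \(\lambda^{d'}=C_{\bm a}\). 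Hence \(P=\zeta c\) for some \(d'\)-th root of unity \(\zeta\).
  \item Adjust the roots to force \(\zeta=1\). Replacing \(x_{i}\) by \(\epsilon x_{i}\), where \(\epsilon^d=1\), multiplies \(P\) by \(\epsilon^{a_{i}}\). Since \(\gcd_{i\in I}(a_i)=g\), these adjustments together multiply \(P\) by every \(\epsilon^g\), that is, by every \(d'\)-th root of unity.
\end{enumerate}
We can therefore achieve \(P=c\). Then all the equations \(dx_i^d=-\lambda a_i P\) hold. By Euler's identity the point also lies on the hypersurface, so it is a singular point.

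The main obstacle is this root-of-unity bookkeeping in the converse, specifically the use of \(\gcd=g\) to reach every \(d'\)-th root of unity. The forward direction is routine.
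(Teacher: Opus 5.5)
Your proof is correct and takes essentially the same route as the paper. In the forward direction you multiply each \(\partial F/\partial X_i\) by \(X_i\), take the product of the \(a_i'\)-th powers over \(i\in I\), and cancel \((X^{\bm a})^{d'}\neq 0\); in the converse you build a singular point supported on \(I\) from \(d\)-th roots and use that \((\zeta_i)\mapsto\prod_i\zeta_i^{a_i}\) maps \(\mu_d^n\) onto \(\mu_{d'}\). Your free constant with \(c=-d/\lambda\) gives exactly the paper's normalization \(x_i^d=a_i\), and your explicit check that \(\partial F/\partial X_i=dX_i^{d-1}\) vanishes at \(X_i=0\) for \(i\notin I\) is a detail the paper leaves implicit.
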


\begin{proof}
  Assume that the hypersurface defined by \(F = X_1^d+\cdots+X_n^d+\lambda X_1^{a_1}\cdots X_n^{a_n}\) has a singular point \(x = [x_1: \cdots : x_n] \in \PP^{n-1}_\C(\C)\). Then \(\lambda \neq 0\), because the Fermat hypersurface is nonsingular. For each \(i = 1, \ldots, n\), we have
  \[
x_i\frac{\partial F}{\partial X_i}(x) = dx_i^d + a_i \lambda x_1^{a_1}\cdots x_n^{a_n} = 0,
\]
and hence
  \begin{align*}
    (x_1^{a_1}\cdots x_n^{a_n})^{d'} &=\prod_{i\in I}\left(-\frac{a_i}{d}\lambda x_1^{a_1}\cdots x_n^{a_n} \right)^{a_i'} = C_{\bm{a}}^{-1}\lambda^{d'}(x_1^{a_1}\cdots x_n^{a_n})^{d'}.
  \end{align*}
    Since \(x_1^{a_1}\cdots x_n^{a_n} \neq 0\), otherwise \(dx_i^d = -a_i \lambda x_1^{a_1}\cdots x_n^{a_n} = 0\) for all \(i\), we obtain
  \begin{equation}
    \label{eq:condition}
    \lambda^{d'}=C_{\bm{a}}.
  \end{equation}

Conversely, assume that \(\lambda\) satisfies \eqref{eq:condition}.
Choose a \(d\)-th root \(\alpha_i\) of \(a_i\) for each \(i \in I\). By assumption,
\[
\left(\frac{-d}{\lambda\prod_{i \in I}\alpha_i^{a_i}}\right)^{d'}=1.
\]

Since \(\gcd(d,a_1,\ldots,a_n)=g\), the homomorphism
\[
  \mu_d^n\longrightarrow\mu_d;\quad (\zeta_i)\longmapsto\prod_{i=1}^n\zeta_i^{a_i}
\]
has image \(\mu_{d'}\). Therefore, there exist \(\zeta_i\in \mu_d\) such that \(\prod_{i = 1}^{n}\zeta_i^{a_i} = -d/(\lambda\prod_{i \in I}\alpha_i^{a_i})\). Now we define
\[
x_i=
\begin{cases}
\alpha_i\zeta_i & (a_i \neq 0),\\
0 & (a_i = 0).
\end{cases}
\]
Then for every \(i\),
\[
x_i\frac{\partial F}{\partial X_i}(x) =  d x_i^d+\lambda a_i \prod_{i \in I}x_i^{a_i} = da_i+\lambda a_i\left(-\frac{d}{\lambda}\right)=0.
\]
If \(a_i>0\), we have \(x_i\neq0\), so \(\partial F/\partial X_i(x)=0\). Euler's relation also gives \(F(x)=0\), so \(x = [x_1: \cdots : x_n]\) is a singular point.
\end{proof}

  Let \(Y/S\) be the family of projective hypersurfaces over \(S = \spec \C[\lambda_1,\ldots,\lambda_{n+1}]\) associated with the matrix
  \[
    A=
\begin{pmatrix}
d & 0 & \cdots & 0 & a_1\\
0 & d & \cdots & 0 & a_2\\
\vdots & \vdots & \ddots & \vdots & \vdots\\
0 & 0 & \cdots & d & a_n
\end{pmatrix}
  \]
  that is, the family defined by the homogeneous polynomial
  \[F_A = \lambda_1X_1^d + \cdots + \lambda_nX_n^d + \lambda_{n+1} X_1^{a_1}\cdots X_n^{a_n}.\]

\subsection{Restriction of GKZ Systems}\label{sec:restriction-one-parameter}
  Let \(\mathbb{A}^1_\lambda=\spec\C[\lambda]\), and let
  \[
    \iota:\mathbb{A}^1_\lambda\hookrightarrow S;\quad \lambda\longmapsto(1,\ldots,1,\lambda)
  \]
  be the closed immersion whose image is defined by \(\lambda_1 = \cdots = \lambda_n = 1\).
  \begin{proposition}\label{prop:noncharacteristic-restriction}
    For any \(\bm{\beta} \in \C^{n}\), \(M_A(\bm{\beta})\) is non-characteristic with respect to \(\iota\).
  \end{proposition}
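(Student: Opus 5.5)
The plan is to verify the non-characteristic condition directly from the principal symbols of the generators of \(H_A(\bm{\beta})\). Recall that \(M_A(\bm{\beta})\) is non-characteristic with respect to \(\iota\) if
\[
  \mathrm{Ch}(M_A(\bm{\beta}))\cap T^*_{\iota(\mathbb{A}^1_\lambda)}S\subset T^*_SS,
\]
where \(T^*_{\iota(\mathbb{A}^1_\lambda)}S\) is the conormal bundle and \(T^*_SS\) is the zero section.

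Use coordinates \((\lambda_1,\ldots,\lambda_{n+1};\xi_1,\ldots,\xi_{n+1})\) on \(T^*S\), with \(\xi_j\) the symbol of \(\partial_j\). The image \(\iota(\mathbb{A}^1_\lambda)\) is cut out by \(\lambda_1-1,\ldots,\lambda_n-1\). Hence its conormal bundle consists of the points with
\[
  \lambda_1=\cdots=\lambda_n=1,\qquad \lambda_{n+1}=\lambda \text{ arbitrary},\qquad \xi_{n+1}=0,
\]
and \((\xi_1,\ldots,\xi_n)\in\C^n\) arbitrary.

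The characteristic variety \(\mathrm{Ch}(M_A(\bm{\beta}))\) is contained in the common zero locus of the principal symbols of the elements of \(H_A(\bm{\beta})\). I will use only the Euler operators \(E_i-\beta_i\). With respect to the order filtration, their principal symbols are
\[
  \sigma(E_i-\beta_i)=\sum_{j=1}^{n+1}a_{ij}\lambda_j\xi_j=d\lambda_i\xi_i+a_i\lambda_{n+1}\xi_{n+1},\qquad i=1,\ldots,n.
\]
Note that \(\beta_i\) has order zero and so does not contribute.

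Now take a point of \(\mathrm{Ch}(M_A(\bm{\beta}))\) lying in the conormal bundle. There \(\lambda_i=1\) and \(\xi_{n+1}=0\), so the vanishing of \(\sigma(E_i-\beta_i)\) gives \(d\xi_i=0\). Since \(d\neq 0\), we get \(\xi_i=0\) for \(i=1,\ldots,n\). Together with \(\xi_{n+1}=0\), the covector vanishes, so the intersection lies in the zero section. This holds for every \(\bm{\beta}\), since \(\bm{\beta}\) does not enter the principal symbols. The toric generators \(I_A\) are not needed.

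There is no real obstacle; the computation is essentially immediate. The only points to handle carefully are the conventions: that \(\mathrm{Ch}\) is contained in the zero set of the symbols of any chosen elements of the ideal (here just the Euler operators), and the correct description of the conormal bundle, in particular that \(\xi_{n+1}=0\) because \(\lambda_{n+1}\) is the coordinate along the image curve.
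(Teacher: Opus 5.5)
Your proposal is correct and is essentially the paper's proof. You both intersect the conormal bundle \(\{\lambda_1=\cdots=\lambda_n=1,\ \xi_{n+1}=0\}\) with the zero locus of the Euler symbols \(d\lambda_i\xi_i+a_i\lambda_{n+1}\xi_{n+1}\), which forces \(\xi_1=\cdots=\xi_n=0\). The paper also writes down the toric symbol but never uses it.
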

  \begin{proof}
    The toric ideal of \(A\) is generated by \(\partial_1^{a_1/g} \cdots \partial_n^{a_n/g} - \partial_{n+1}^{d/g}\).

    For every \(\bm{\beta}\), the characteristic variety of \(M_A(\bm{\beta})\)
    is contained in the common zero locus of the principal symbols
    \[
      \begin{cases}
        \xi_1^{a_1/g}\cdots\xi_n^{a_n/g} - \xi_{n+1}^{d/g} = 0\\
        d\lambda_i\xi_i + a_i\lambda_{n+1}\xi_{n+1} = 0 \quad (i = 1,...,n)
      \end{cases}
    \]
    in the cotangent space \(T^*S = \spec \C[\lambda_1, \ldots,\lambda_{n+1}, \xi_1, \ldots, \xi_{n+1}]\).

    Since the conormal bundle of \(\iota(\mathbb{A}^1_\lambda)\) is defined by
        \[
      \begin{cases}
        \xi_{n+1} = 0\\
        \lambda_i = 1 \quad (i = 1,...,n),
      \end{cases}
    \]
    the Euler symbols force \(\xi_i=0\) for \(i=1,\ldots,n\) on this
    intersection. Thus the characteristic variety meets the conormal
    bundle only in the zero section, as required.
  \end{proof}

Write \(\partial_\lambda=d/d\lambda\) and \(\theta_\lambda=\lambda\partial_\lambda\), and let \(V=\spec\C[\lambda,1/(\lambda^{d'}-C_{\bm{a}})]\). Then \(Y\times_S\mathbb{A}^1_\lambda \rightarrow \mathbb{A}^1_\lambda\) is smooth over \(V\). We write \(j:V\setminus\{0\}\hookrightarrow V\).
The morphism
\begin{align*}
  h:V\setminus\{0\} &\longrightarrow \mathbb{G}_m\setminus\{1\} = \spec \C[t, (t(t-1))^{-1}]\\
   \lambda &\longmapsto t=C_{\bm a}^{-1}\lambda^{d'}
\end{align*}
is finite \'{e}tale. With \(\theta_t=t\partial_t\), we have
\(\theta_\lambda=d'\theta_t\) after pullback.

\begin{definition}\label{def:hypergeometric-module}
  For \(N\in\Z_{\geq0}\) and parameter tuples \(\bm{\mu}=(\mu_1,\ldots,\mu_N)\) and
  \(\bm{\nu}=(\nu_1,\ldots,\nu_N)\) in \(\C^N\), define
  \[
    \hyp(\bm{\mu},\bm{\nu}) :=\prod_{j=1}^N(\theta_t+\nu_j-1) - t\prod_{i=1}^N(\theta_t+\mu_i).
  \]
  The associated hypergeometric \(\D\)-module on
  \(\mathbb{G}_m\setminus\{1\}\) is
  \[
    \mathcal{H}(\bm{\mu}; \bm{\nu}) := \D_{\mathbb{G}_m\setminus\{1\}}/\D_{\mathbb{G}_m\setminus\{1\}}\hyp(\bm{\mu}; \bm{\nu}).
  \]
\end{definition}

For \(\bm{\beta}\in\C^n\), we use the notation
\begin{align*}
\mathcal{H}_{\bm{\beta}} &:=\mathcal{H}\left(\left(\frac{-\beta_i+d\ell}{d a_i'}\;\middle|\;i\in I,\ 0\leq\ell<a_i'\right); \left(\frac1{d'},\frac2{d'},\ldots,1\right)\right)\\
&= \D_{\mathbb{G}_m\setminus\{1\}}/\D_{\mathbb{G}_m\setminus\{1\}}\left(\prod_{j=1}^{d'}\left(\theta_t + \frac{j}{d'} - 1\right) - t\prod_{i\in I}\prod_{\ell=0}^{a_i'-1} \left(\theta_t + \frac{-\beta_i+d\ell}{d a_i'}\right)\right).
\end{align*}

\begin{proposition}\label{prop:restricted-gkz-operator}
  For every \(\bm{\beta} = (\beta_1, \ldots, \beta_n) \in \C^n\), there is an isomorphism
\begin{align*}
  (\iota^+M_A(\bm{\beta}))|_{V\setminus\{0\}} &\cong h^*\mathcal{H}_{\bm{\beta}}.
\end{align*}
\end{proposition}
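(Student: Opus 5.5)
The plan is to derive an explicit ordinary differential operator that annihilates the class of \(1\) in \(\iota^+M_A(\bm{\beta})\), and then compare ranks. Since \(\iota\) is non-characteristic (Proposition~\ref{prop:noncharacteristic-restriction}), \(\iota^+M_A(\bm{\beta})\) is the single module \(M_A(\bm{\beta})/\sum_{i=1}^n(\lambda_i-1)M_A(\bm{\beta})\), with \(\partial_\lambda\) acting as \(\partial_{n+1}\). Write \(\theta_j=\lambda_j\partial_j\).

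\textbf{Step 1: cyclicity.} The relation \(E_i-\beta_i = d\theta_i + a_i\theta_{n+1}-\beta_i\) gives
\[
\lambda_i\partial_i\cdot m = \tfrac{1}{d}(\beta_i-a_i\theta_{n+1})\cdot m \quad\text{for all } m\in M_A(\bm{\beta}).
\]
Since \(\lambda_i\equiv 1\) on the image of \(\iota\), each \(\partial_i\) with \(i\le n\) acts on the restriction through \(\theta_{n+1}\). Hence \(\iota^+M_A(\bm{\beta})\) is generated over \(\D_{\mathbb{A}^1_\lambda}\) by the class of \(1\).

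\textbf{Step 2: the relation.} The toric ideal is generated by \(\partial_1^{a_1'}\cdots\partial_n^{a_n'}-\partial_{n+1}^{d'}\). Multiply this generator on the left by \(\lambda_{n+1}^{d'}\lambda_1^{a_1'}\cdots\lambda_n^{a_n'}\) and use \(\lambda_j^r\partial_j^r=[\theta_j]_r\). This shows that
\[
\lambda_{n+1}^{d'}\prod_{i\in I}[\theta_i]_{a_i'} \;-\; \lambda_1^{a_1'}\cdots\lambda_n^{a_n'}\,[\theta_{n+1}]_{d'}
\]
lies in \(H_A(\bm{\beta})\). All \(\theta_j\) commute, and \(\theta_i-\tfrac1d(\beta_i-a_i\theta_{n+1})\) lies in the ideal. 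Therefore any polynomial \(P(\theta_1,\dots,\theta_n,\theta_{n+1})\) differs from \(P\bigl(\tfrac{\beta-a\theta_{n+1}}{d},\theta_{n+1}\bigr)\) by an element of \(\sum_i\C[\theta](E_i-\beta_i)\subset H_A(\bm\beta)\). Restricting to \(\lambda_i=1\) gives the relation
\[
\lambda^{d'}\prod_{i\in I}\prod_{\ell=0}^{a_i'-1}\Bigl(\tfrac{\beta_i-a_i\theta_\lambda}{d}-\ell\Bigr)\cdot[1] = [\theta_\lambda]_{d'}\cdot[1].
\]

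\textbf{Step 3: change of variable.} Substitute \(\theta_\lambda=d'\theta_t\) and use \(a_id'=a_i'd\). Each factor then becomes \(-a_i'\bigl(\theta_t+\tfrac{-\beta_i+d\ell}{da_i'}\bigr)\). On the other side,
\[
[\theta_\lambda]_{d'}=d'^{d'}\prod_{j=0}^{d'-1}\Bigl(\theta_t-\tfrac{j}{d'}\Bigr)=d'^{d'}\prod_{j=1}^{d'}\Bigl(\theta_t+\tfrac{j}{d'}-1\Bigr).
\]
Collecting constants, \(\lambda^{d'}\) is multiplied by
\[
\frac{\prod(-a_i')^{a_i'}}{d'^{d'}}=\frac{(-1)^{d'}\prod_{i\in I} a_i^{a_i'}}{d^{d'}}=C_{\bm a}^{-1},
\]
so \(C_{\bm a}^{-1}\lambda^{d'}=t\). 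Thus the class of \(1\) is annihilated by \(h^*\hyp\) with the parameters defining \(\mathcal H_{\bm\beta}\). Combined with Step 1, this gives a surjection \(h^*\mathcal{H}_{\bm\beta}\twoheadrightarrow(\iota^+M_A(\bm\beta))|_{V\setminus\{0\}}\).

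\textbf{Step 4: injectivity by rank.} This is the main point to check. On \(\mathbb{G}_m\setminus\{1\}\), the operator \(\hyp\) has leading coefficient \((1-t)\theta_t^{d'}\), which is invertible there. So \(\mathcal{H}_{\bm\beta}\) is an integrable connection of rank \(d'\), and so is its pullback under the finite étale map \(h\).

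On the other side, the restriction is non-characteristic and \(V\) lies in the smooth locus. I expect \(\iota^+M_A(\bm\beta)|_{V\setminus\{0\}}\) to be a vector bundle whose rank equals the holonomic rank of \(M_A(\bm\beta)\). By Proposition~\ref{prop:regularity} and the regular-holonomic theory (or Proposition~\ref{prop:w-flat-consequences} for generic parameters, together with upper semicontinuity), this rank is at least, and generically equal to, \(\mathrm{vol}(A)=\#(\Z A/d\Z^n)\). This group is cyclic, generated by the class of \(\bm a\), and has order \(d/\gcd(d,a_1,\ldots,a_n)=d'\).

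The hard part is to ensure the rank is at least \(d'\) for \emph{every} \(\bm\beta\), because rank jumps can occur at resonant parameters. Here I would use that a cyclic quotient of a rank-\(d'\) connection has rank at most \(d'\), and then argue that it cannot be strictly smaller. For this I would invoke the theorem that for homogeneous \(A\) the holonomic rank is never smaller than \(\mathrm{vol}(A)\). Given that lower bound, a surjection between vector bundles of the same rank on \(V\setminus\{0\}\) is an isomorphism, which proves the claim.
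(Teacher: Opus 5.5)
Your argument is correct in substance, but it gets injectivity by a different mechanism from the paper.

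\textbf{The paper's route.} The paper computes the presentation of $\iota^+M_A(\bm\beta)$ outright. Modulo the Euler relations, $\D_{\mathbb{A}^1_\lambda\to S}$ collapses to a free rank-one $\D_{\mathbb{A}^1_\lambda}$-module via $1\otimes\partial_i\mapsto(\beta_i-a_i\theta_\lambda)/d$. The single generator of $I_A$ then contributes exactly one relation, so $\iota^+M_A(\bm\beta)\cong\D_{\mathbb{A}^1_\lambda}/\D_{\mathbb{A}^1_\lambda}\bigl(\partial_\lambda^{d'}-C_{\bm a}^{-1}\prod_{i\in I}\prod_{\ell}(\theta_\lambda+\frac{-\beta_i+d\ell}{a_i})\bigr)$ already on all of $\mathbb{A}^1_\lambda$. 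Multiplying by $\lambda^{d'}$ on $V\setminus\{0\}$ finishes the proof.

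\textbf{Your route.} Your Steps 1--3 give only the half ``this relation holds''. You replace ``there are no further relations'' by a rank count, which needs two inputs.
\begin{enumerate}
\item The lower bound $\rank M_A(\bm\beta)\geq\operatorname{vol}(A)$ for \emph{all} $\bm\beta$ when $A$ is homogeneous. This is a theorem in \cite{SST}, applied to $\widetilde A$; with the paper's normalization, $\operatorname{vol}(A)=\#(\Z A/d\Z^n)=d'$.
\item The equality of the rank of the restriction with the holonomic rank, which you only ``expect''. It follows from the symbols already used in Proposition~\ref{prop:noncharacteristic-restriction}. Over $(1,\ldots,1,\lambda)$ the Euler symbols force $\xi_i=-a_i\lambda\xi_{n+1}/d$, and then the toric symbol becomes $\xi_{n+1}^{d'}(C_{\bm a}^{-1}\lambda^{d'}-1)$. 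So over points of $\iota(V\setminus\{0\})$ the characteristic variety lies in the zero section. Hence $M_A(\bm\beta)$ is an integrable connection near $\iota(V\setminus\{0\})$, and its restriction there has full rank.
\end{enumerate}

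\textbf{What each buys.} The paper's argument is elementary. With no rank theorem, it gives the presentation over all of $\mathbb{A}^1_\lambda$, and hence $\rank M_A(\bm\beta)=d'$ for every $\bm\beta$. Yours avoids the bookkeeping the paper leaves implicit: checking that the twisted relations $(1\otimes\partial^{\bm b})\cdot(\partial_{n+1}^{d'}-\prod_i\partial_i^{a_i'})$ land in the left ideal generated by the image of the generator. The price is importing the rank lower bound.

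\textbf{A slip in Step 1.} The identity $\lambda_i\partial_i\cdot m=\frac1d(\beta_i-a_i\theta_{n+1})\cdot m$ holds for $m=[1]$, not for all $m$. Since $E_i\partial^{\bm u}=\partial^{\bm u}(E_i-(A\bm u)_i)$, for $m=\partial^{\bm u}\cdot[1]$ the right-hand side is $\frac1d(\beta_i-(A\bm u)_i-a_i\theta_{n+1})\cdot m$. Also, $\partial_i$ ($i\le n$) does not act on $M_A(\bm\beta)/\sum_j(\lambda_j-1)M_A(\bm\beta)$. What you actually use is $\partial_i m\equiv\lambda_i\partial_i m$ modulo $(\lambda_i-1)M_A(\bm\beta)$. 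Induction on $\bm u$ with the corrected formula then gives cyclicity.
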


\begin{proof}
  By Proposition~\ref{prop:noncharacteristic-restriction}, the inverse
  image is concentrated in degree zero. As a left \(\D_{\mathbb{A}^1_\lambda}\)-module,
  \[
    \D_{\mathbb{A}^1_\lambda\rightarrow S}\cong\D_{\mathbb{A}^1_\lambda}\otimes_\C\C[\partial_1,\ldots,\partial_n].
  \]
  Quotienting by the Euler relations gives
  \[
    \D_{\mathbb{A}^1_\lambda\to S}/\sum_{i=1}^n\D_{\mathbb A^1_\lambda\to S}(E_i-\beta_i) \cong \D_{\mathbb{A}^1_\lambda}; \quad 1\otimes\partial_i\longmapsto\frac{\beta_i-a_i\theta_\lambda}{d}.
  \]
  Since \(I_A\) is generated by \(\partial_{n+1}^{d'} - \prod_i\partial_i^{a_i'}\), we obtain
  \[
    \iota^+M_A(\bm{\beta}) \cong \D_{\mathbb{A}^1_\lambda}/\D_{\mathbb{A}^1_\lambda}\left(\partial_\lambda^{d'} - C_{\bm{a}}^{-1}\prod_{i\in I}\prod_{\ell=0}^{a_i'-1}\left(\theta_\lambda+\frac{-\beta_i+d\ell}{a_i}\right)\right).
  \]
  Since \(\lambda\) is invertible on \(V\setminus\{0\}\), multiplying by \(\lambda^{d'}\) gives
  \begin{align*}
    &\D_{V\setminus\{0\}}\left(\partial_\lambda^{d'} - C_{\bm{a}}^{-1}\prod_{i\in I}\prod_{\ell=0}^{a_i'-1} \left(\theta_\lambda+\frac{-\beta_i+d\ell}{a_i}\right)\right)\\
    &= \D_{V\setminus\{0\}}\left(\prod_{j = 0}^{d'-1}(\theta_\lambda - j)-C_{\bm{a}}^{-1}\lambda^{d'}\prod_{i\in I}\prod_{\ell=0}^{a_i'-1}\left(\theta_\lambda+\frac{-\beta_i+d\ell}{a_i}\right)\right).
  \end{align*}
  On the other hand,
  \[
    h^*\mathcal{H}_{\bm{\beta}}\cong \D_{V\setminus\{0\}}/\D_{V\setminus\{0\}}\left(\prod_{j=1}^{d'}\left(\frac{\theta_\lambda}{d'}+\frac{j}{d'}-1\right) - C_{\bm{a}}^{-1}\lambda^{d'}\prod_{i\in I}\prod_{\ell=0}^{a_i'-1}\left(\frac{\theta_\lambda}{d'}+\frac{-\beta_i+d\ell}{d a_i'}\right)
    \right).
  \]
  Hence \(\iota^+M_A(\bm{\beta})|_{V\setminus\{0\}}\) is isomorphic to \(h^*\mathcal{H}_{\bm{\beta}}\).
\end{proof}

\subsection{The Gauss--Manin Connection of the Generalized Dwork Family}
\begin{definition}
  For two parameter vectors \(\bm{\mu} = (\mu_1, \ldots, \mu_N), \bm{\nu} = (\nu_1, \ldots, \nu_N) \in \C^N\), we say a pair of \(\mu_i\) and \(\nu_j\) is a resonant pair if \(\mu_i \equiv \nu_j \bmod \Z\). Let \(\bm{\mu}', \bm{\nu}'\) be the parameter vectors obtained by removing all resonant pairs. Then we define
  \[
    \Cancel\mathcal{H}(\bm{\mu}; \bm{\nu}) := \mathcal{H}(\bm{\mu}';\bm{\nu}').
  \]
\end{definition}

For any parameters \(\bm{\mu}, \bm{\nu} \in \C^N\), since
\[
  \hyp(\bm{\mu},\bm{\nu})(\theta_t + \mu_i - 1)=(\theta_t + \mu_i - 1)\hyp(\bm{\mu}-\bm{e}_i,\bm{\nu}),
\]
right multiplication by \(\theta_t+\mu_i-1\) defines
\begin{equation}\label{eq:hypergeometric-contiguity}
  \cdot(\theta_t + \mu_i -1):\mathcal{H}(\bm{\mu}; \bm{\nu})
  \longrightarrow\mathcal{H}(\bm{\mu}-\bm{e}_i;\bm{\nu}).
\end{equation}
There are two cases:
\begin{enumerate}
  \item If \(\mu_i\neq\nu_j\) for every \(j\), then (\ref{eq:hypergeometric-contiguity}) is an
  isomorphism by \cite[Proposition~2.5]{BH}.
  \item If \(\mu_i=\nu_j\) for some \(j\), the image of (\ref{eq:hypergeometric-contiguity}) is isomorphic to
  \[
    \mathcal{H}((\mu_1, \ldots, \widehat{\mu_i}, \ldots, \mu_N);(\nu_1, \ldots, \widehat{\nu_j}, \ldots, \nu_N)).
  \]
\end{enumerate}

Assume that \((\mu_i, \nu_j)\) is a resonant pair such that \(|\mu_i - \nu_j|\) is minimal among all resonant pairs. If \(\delta = \mu_i - \nu_j \geq 0\), then the image of a chain of contiguity operators
\[
  \mathcal{H}(\bm{\mu}; \bm{\nu}) \xrightarrow[\cdot(\theta_t + \mu_i -1)]{\sim} \cdots \xrightarrow[\cdot(\theta_t + \mu_i - \delta)]{\sim}\mathcal{H}(\bm{\mu} - \delta\bm{e}_i; \bm{\nu}) \xrightarrow[\cdot(\theta_t + \mu_i-\delta -1)]{}\mathcal{H}(\bm{\mu} - (\delta + 1)\bm{e}_i; \bm{\nu})
\]
is isomorphic to \( \mathcal{H}((\mu_1, \ldots, \widehat{\mu_i}, \ldots, \mu_N);(\nu_1, \ldots, \widehat{\nu_j}, \ldots, \nu_N))\). When \(\delta = \mu_i - \nu_j < 0\), the image of
\[
  \mathcal{H}(\bm{\mu} - \delta\bm{e}_i; \bm{\nu}) \xrightarrow[\cdot(\theta_t + \mu_i - \delta -1)]{} \mathcal{H}(\bm{\mu} - (\delta + 1)\bm{e}_i; \bm{\nu}) \xrightarrow[\cdot(\theta_t + \mu_i -\delta - 2)]{\sim} \cdots \xrightarrow[\cdot(\theta_t + \mu_i)]{\sim}\mathcal{H}(\bm{\mu}; \bm{\nu})
\]
is isomorphic to \( \mathcal{H}((\mu_1, \ldots, \widehat{\mu_i}, \ldots, \mu_N);(\nu_1, \ldots, \widehat{\nu_j}, \ldots, \nu_N))\). Repeating this procedure removes all remaining resonant pairs. Therefore, \(\Cancel \mathcal{H}(\bm{\mu}; \bm{\nu})\) is obtained as the image of a chain of the contiguity operators.\medskip

The following result generalizes \cite[Corollary~8.5]{K09} and \cite{N}. The same result is also proved in \cite[\S 5]{AM} in the case \(g=1\).

\begin{theorem}\label{thm:dwork-hypergeometric}
Let \(j:V\setminus\{0\} \rightarrow V\) be the inclusion. When \(H^{n-2}_{\dR}(Y_V/V)^{\mathrm{prim}}_{\overline{\bm{\gamma}}} \neq 0\),
\begin{align*}
    H^{n-2}_{\dR}(Y_V/V)^{\mathrm{prim}}_{\overline{\bm{\gamma}}}|_{V\setminus\{0\}}\cong h^*(\Cancel\mathcal H_{-\bm{\gamma}}),\\
    H^{n-2}_{\dR}(Y_V/V)^{\mathrm{prim}}_{\overline{\bm{\gamma}}} \cong j_{!+} h^*(\Cancel\mathcal H_{-\bm{\gamma}}).
\end{align*}
\end{theorem}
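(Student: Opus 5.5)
The plan is to restrict the isomorphism of Theorem~\ref{thm:gauss-manin-gkz} along \(\iota\), identify the restricted modules with hypergeometric modules via Proposition~\ref{prop:restricted-gkz-operator}, and then recognize the image of the GKZ contiguity operator as the \(\Cancel\) of \(\mathcal H_{-\bm\gamma}\).

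\textbf{Step 1: restriction.} By base change for the smooth family, \(H^{n-2}_{\dR}(Y_V/V)^{\mathrm{prim}}_{\overline{\bm\gamma}}\) is the restriction \(\iota^+\) of \(H^{n-2}_{\dR}(Y_U/U)^{\mathrm{prim}}_{\overline{\bm\gamma}}\). Here we note that \(\iota(V)\subset U\) by the smoothness criterion \(\lambda^{d'}\neq C_{\bm a}\), and that the \(\Gamma_A/\Delta\)-action is compatible with restriction. By Theorem~\ref{thm:gauss-manin-gkz} and Proposition~\ref{prop:contiguity-image}, this module is \(\iota^+\) of the image \(\mathcal I_k\) of
\[
  \cdot(\partial_1\cdots\partial_n)^{2k}:M_A(\bm\gamma_-)\to M_A(\bm\gamma_+),
\]
where \(\bm\gamma_\pm=-\bm\gamma\pm kd\1\) and \(k\gg0\).

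By Proposition~\ref{prop:noncharacteristic-restriction}, both GKZ systems are non-characteristic for \(\iota\). Their submodule \(\mathcal I_k\) and quotient \(C_k\) are then also non-characteristic, since their characteristic varieties are contained in that of the ambient module. Consequently \(\iota^+\) is exact on the sequence \(0\to\mathcal I_k\to M_A(\bm\gamma_+)\to C_k\to0\). The same argument applied to \(M_A(\bm\gamma_-)\twoheadrightarrow\mathcal I_k\) shows that \(\iota^+\mathcal I_k\) is the image of the restricted map.

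\textbf{Step 2: identification of the restricted contiguity map.} On \(V\setminus\{0\}\), Proposition~\ref{prop:restricted-gkz-operator} identifies the restricted map with a morphism \(h^*\mathcal H_{\bm\gamma_-}\to h^*\mathcal H_{\bm\gamma_+}\). Under the relation \(1\otimes\partial_i\mapsto(\beta_i-a_i\theta_\lambda)/d\) used in that proof, the operator \((\partial_1\cdots\partial_n)^{2k}\) becomes a product of factors \((\theta_\lambda\text{-linear})\) that shift the parameters one by one. In the \(t\)-coordinate these are exactly the hypergeometric contiguity operators \eqref{eq:hypergeometric-contiguity}, up to units on \(V\setminus\{0\}\). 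Each \(\mu\)-parameter of \(\mathcal H_{\bm\beta}\) has the form \((-\beta_i+d\ell)/(da_i')\), and moving \(\beta_i\) by \(d\) shifts it by \(1/a_i'\); this cyclically permutes the \(\mu\)'s modulo \(\Z\), so the net effect is integer shifts of each \(\mu\). A \(\mu\) with \(a_i'\nmid\) the shift is non-resonant with every \(\nu\in\{1/d',\dots,1\}\). Following the chain and using cases 1 and 2 after \eqref{eq:hypergeometric-contiguity}, the image of the chain of contiguity operators spanning \(\mu\mapsto\mu-2k\)-type shifts (for \(k\) large, longer than any resonant distance) is \(h^*\Cancel\mathcal H_{\bm\gamma_\pm}\). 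Since \(\Cancel\mathcal H\) depends only on parameters modulo \(\Z\) up to isomorphism (the cancellation chain argument given just before the theorem), this equals \(h^*\Cancel\mathcal H_{-\bm\gamma}\). A cleaner route, which I would take to avoid bookkeeping, is to argue abstractly: the image of a composite of maps between irreducible-plus-resonant pieces is the unique irreducible subquotient with generic rank. Both sides are irreducible: the left side by the Corollary after Theorem~\ref{thm:gauss-manin-gkz} together with irreducibility under finite étale pullback of restrictions, and the right side as a nonresonant hypergeometric module by Beukers--Heckman. It therefore suffices to exhibit a nonzero map and compare ranks. The rank of \(\Cancel\mathcal H_{-\bm\gamma}\) is \(d'\) minus the number of resonant pairs, and I would match this with \(\#\mathfrak I_{\bm\gamma}\), the rank given by the rank Lemma, by direct counting of residues.

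\textbf{Step 3: extension across \(0\).} The second isomorphism should follow from the first once we know that \(H^{n-2}_{\dR}(Y_V/V)^{\mathrm{prim}}_{\overline{\bm\gamma}}\) is a minimal extension from \(V\setminus\{0\}\). It is an irreducible \(\D_V\)-module: it is the restriction of an irreducible module \(\iota^+\FT(\phi_{!+}\mathcal K)\), which is irreducible because its local system on \(V\) (a connection, hence smooth at \(0\)) is irreducible on \(V\setminus\{0\}\), as shown in Step 2. A vector bundle with connection on \(V\) whose restriction to \(V\setminus\{0\}\) is irreducible equals \(j_{!+}\) of that restriction.

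\textbf{Main obstacle.} I expect the main obstacle to be Step 2, specifically the rank match (or equivalently the precise tracking of resonant cancellations through the \(2kn\) contiguity factors), because the \(\mu\)-parameters are indexed by \(i\in I\) and \(\ell<a_i'\) in a way that interacts with the condition \(\beta_i+\gamma_i\not\equiv0\bmod d\) defining \(\mathfrak I_{\bm\gamma}\).
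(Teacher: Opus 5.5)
\textbf{Verdict: there is a genuine gap in the route you say you would take.} Your Step 1 and the contiguity-tracking sketch at the start of Step 2 are essentially the paper's argument, and that route works. The gap is in the ``cleaner route.''

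\textbf{Why the cleaner route fails.} Its premise is that both \(H^{n-2}_{\dR}(Y_V/V)^{\mathrm{prim}}_{\overline{\bm\gamma}}|_{V\setminus\{0\}}\) and \(h^*(\Cancel\mathcal H_{-\bm\gamma})\) are irreducible. This is false in general. Irreducibility over \(U\) (the Corollary) is not inherited by restriction to the line \(\iota(V)\). Irreducibility of \(\Cancel\mathcal H_{-\bm\gamma}\) on \(\mathbb G_m\setminus\{1\}\) is not preserved by the Kummer pullback \(h\).

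For example, take \(n=d=4\), \(\bm a=(2,2,0,0)\), \(\bm\gamma=(1,3,1,3)\). Then \(d'=2\), \(C_{\bm a}=4\) and \(\#\mathfrak I_{\bm\gamma}=2\). Also \(\mathcal H_{-\bm\gamma}=\mathcal H((\tfrac14,\tfrac34);(\tfrac12,1))\) has no resonant pair, so it equals its \(\Cancel\) and is irreducible. Its solutions are \({}_2F_1(\tfrac14,\tfrac34;\tfrac12;t)\) and \(t^{1/2}\,{}_2F_1(\tfrac34,\tfrac54;\tfrac32;t)\). These span the same space as \((1\pm z)^{-1/2}\) with \(z^2=t\). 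Since \(t=\lambda^2/4\), the pullback \(h^*\mathcal H_{-\bm\gamma}\) is a direct sum of two rank-one connections, with solutions \((1\pm\lambda/2)^{-1/2}\). So in this example both sides of the theorem are reducible. A nonzero morphism between reducible modules of equal rank need not be an isomorphism, so ``exhibit a nonzero map and compare ranks'' proves nothing. You also never say where such a map would come from.

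\textbf{What to do instead.} You have to carry out the bookkeeping of your first route, as the paper does.
Under Proposition~\ref{prop:restricted-gkz-operator}, \(\cdot\partial_i:M_A(\bm\beta)\to M_A(\bm\beta+d\bm e_i)\) becomes \(h^*\) of right multiplication by \((\beta_i+d-da_i'\theta_t)/d\).

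For \(i\in I\), this is \(-a_i'\) times the contiguity operator \eqref{eq:hypergeometric-contiguity} that lowers the largest \(\mu\) of the \(i\)-th block by one.

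For \(i\notin I\), it is the constant \((\beta_i+d)/d\). This constant is nonzero along the whole chain, because \(\mathfrak I_{\bm\gamma}\neq\emptyset\) forces \(\gamma_i\not\equiv 0\bmod d\) for \(i\notin I\).

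Going from \(\mathcal H_{-\bm\gamma-kd\1}\) to \(\mathcal H_{-\bm\gamma+kd\1}\), each \(\mu\) is lowered through about \(2k/a_i'\) consecutive integers. So for \(k\gg0\) it passes every \(\nu_j\) resonant with it. The two cases after \eqref{eq:hypergeometric-contiguity} then give \(\Cancel\mathcal H_{-\bm\gamma}\) as the image. The residue count you flagged as the main obstacle is not needed.

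\textbf{Step 3.} You invoke the same false irreducibility here, claiming that \(\iota^+\FT(\phi^{\widetilde{A}}_{!+}\mathcal K_{T\bm\gamma-\1})\) is irreducible. The conclusion does not need it. Any \(\mathcal O_V\)-coherent \(\D_V\)-module has no nonzero submodule or quotient supported at \(0\), so it is the minimal extension of its restriction to \(V\setminus\{0\}\). This is exactly how the paper concludes.
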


\begin{proof}
By Theorem~\ref{thm:gauss-manin-gkz} and
Proposition~\ref{prop:contiguity-image},
\[
  H^{n-2}_{\dR}(Y_V/V)^{\mathrm{prim}}_{\overline{\bm{\gamma}}} \cong \iota^+\operatorname{Im}\!\left(M_A(-\bm{\gamma}-kd \1) \xrightarrow{\cdot(\partial_1\cdots\partial_n)^{2k}} M_A(-\bm{\gamma}+kd \1)\right)|_V
\]
for sufficiently large \(k\).
Under the isomorphism of Proposition~\ref{prop:restricted-gkz-operator}, the restriction of the contiguity morphism \(\cdot\partial_i:M_A(\bm{\beta})\to M_A(\bm{\beta}+d\bm{e}_i)\) is identified with the pullback by \(h\) of
\[
  \cdot\frac{\beta_i+d-d a_i'\theta_t}{d}:\mathcal{H}_{\bm{\beta}}\longrightarrow \mathcal{H}_{\bm{\beta}+d\bm{e}_i}.
\]

Non-characteristic restriction
(Proposition~\ref{prop:noncharacteristic-restriction}) and
\'{e}tale pullback preserve images. Hence the restriction of \(H^{n-2}_{\dR}(Y_V/V)^{\mathrm{prim}}_{\overline{\bm{\gamma}}}\) to \(V\setminus\{0\}\) is the pullback by \(h\) of
\begin{equation}\label{eq:contiguity_image}
  \operatorname{Im}\!\left(\cdot\prod_{i\in I}\prod_{m=-k+1}^{k}\left(\theta_t+\frac{\gamma_i-md}{d a_i'}\right) : \mathcal{H}_{-\bm{\gamma}-kd\1} \longrightarrow \mathcal{H}_{-\bm{\gamma}+kd\1}\right).
\end{equation}

All resonant pairs of \(\mathcal{H}_{-\bm{\gamma}}\) are removed in the chain of contiguity operators  for sufficiently large \(k\), hence the image in \eqref{eq:contiguity_image} is isomorphic to \(\Cancel\mathcal H_{-\bm{\gamma}}\). This proves that
\[
  H^{n-2}_{\dR}(Y_V/V)^{\mathrm{prim}}_{\overline{\bm{\gamma}}}|_{V\setminus\{0\}} \cong h^*(\Cancel\mathcal H_{-\bm{\gamma}}).
\]

Since \(H^{n-2}_{\dR}(Y_V/V)^{\mathrm{prim}}_{\overline{\bm{\gamma}}}\) is an \(\mathcal O_V\)-coherent \(\D_V\)-module,
\[
    H^{n-2}_{\dR}(Y_V/V)^{\mathrm{prim}}_{\overline{\bm{\gamma}}} \cong j_{!+}\left(H^{n-2}_{\dR}(Y_V/V)^{\mathrm{prim}}_{\overline{\bm{\gamma}}}|_{V\setminus\{0\}}\right) \cong j_{!+}h^*(\Cancel\mathcal H_{-\bm{\gamma}}).
\]
\end{proof}

\begin{example}[{\cite[Example~5.5]{Klo}}]\label{ex:quartic-monomial-deformation}
  Consider the family of genus \(3\) curves defined by
  \[
    X^4+Y^4+Z^4+\lambda X^2Y^2=0
  \]
  over \(V=\spec\C[\lambda,1/(\lambda^2-4)]\). Here \(n=3\), \(d=4\), \(\bm{a}=(2,2,0)\), and
  \[
    g=2,\quad d'=2,\quad (a_1',a_2',a_3')=(1,1,0), \quad C_{\bm{a}}=4.
  \]
  Thus \(h:V\setminus\{0\}\to\mathbb G_m\setminus\{1\}\) is given by
  \(t=\lambda^2/4\). The nonzero components of \(H^{1}_{\dR}(Y_V/V)^{\mathrm{prim}}\) correspond to the classes in \(H_{3,4}/\Z A\cong\widehat{\Gamma_A/\Delta}\) represented by
\[
  (1,1,2),\quad (1,2,1),\quad (2,1,1), \quad (2,3,3),\quad (3,2,3).
\]
For \(\bm{\gamma} = (1,2,1)\),
\[
  \mathcal{H}_{-\bm{\gamma}} = \mathcal{H}\left(\left(\frac{1}{4}, \frac{1}{2}\right);\left(\frac{1}{2}, 1\right)\right).
\]
By Theorem~\ref{thm:dwork-hypergeometric}, \(H^{1}_{\dR}(Y_V/V)^{\mathrm{prim}}_{\overline{\bm{\gamma}}}|_{V\setminus\{0\}}\) is isomorphic to the pullback by \(h\) of
\[
  \Cancel\mathcal{H}_{-\bm{\gamma}} = \mathcal{H}\left(\left(\frac{1}{4}\right);\left(1\right)\right) = \D/\D(\theta_t-t(\theta_t+\tfrac{1}{4})).
\]
\(\Cancel\mathcal{H}_{-\bm{\gamma}}\) for the other \(\bm{\gamma}\) are listed in Table~\ref{tab:quartic-hypergeometric-modules}.
\begin{table}[htbp]
  \centering
  \(
    \begin{array}{c|c|c}
      \bm{\gamma}
      & \Cancel\mathcal H_{-\bm{\gamma}} & \text{rank} \\\hline
      (2,1,1), (1,2,1)
      & \D/\D(\theta_t - t(\theta_t+\tfrac{1}{4})) & 1 \\ (2,3,3), (3,2,3)
      & \D/\D(\theta_t-t(\theta_t+\tfrac{3}{4})) & 1 \\ (1,1,2)
      & \D/\D(\theta_t(\theta_t-\tfrac{1}{2})-t(\theta_t+\tfrac{1}{4})^2) & 2
    \end{array}
  \)
  \caption{Hypergeometric \(\D\)-modules associated with the quartic family.}
  \label{tab:quartic-hypergeometric-modules}
\end{table}
\end{example}

\begin{remark}
  Although we have worked over \(\C\) throughout this paper, the same results in fact hold over any field of characteristic zero containing the \(d\)-th roots of unity. Therefore, by base change to a suitable \(p\)-adic field, one can recover, as in \cite[\S 6]{N}, some of the results of \cite{Klo} concerning the Frobenius action on \(p\)-adic cohomology.
\end{remark}

\subsection*{Acknowledgments}
The author would like to thank Saiei-Jaeyeong Matsubara-Heo for suggesting an idea for the proof of Proposition \ref{prop:inequality}. This work was supported by JST K Program Grant Number JPMJKP24U2.
\bibliography{ref}
\bibliographystyle{plain}

\bigskip
\noindent
\begin{minipage}{\textwidth}
\raggedright
\textbf{Ryo Negishi}\\
{Department of Mathematics}\\
{Rikkyo University}\\
{Tokyo 171-8501, Japan}\\
\texttt{5072908@rikkyo.ac.jp}
\end{minipage}

\end{document}